\documentclass[a4paper,reqno]{amsart}

\usepackage[
left=2.8cm,
right=2.8cm,
top=3cm,
bottom=3cm
]{geometry}

\usepackage{amsfonts}
\usepackage{amssymb}
\usepackage{amsmath}
\usepackage{mathrsfs}
\usepackage{amsthm}
\usepackage[colorlinks,linkcolor=blue,citecolor=blue]{hyperref}
\usepackage{empheq}
\usepackage{microtype}
\usepackage[normalem]{ulem}
\usepackage{marginnote}

\def\red{\color{red}}

\newtheorem{theorem}{Theorem}[section]
\newtheorem{corollary}[theorem]{Corollary}
\newtheorem{lemma}[theorem]{Lemma}
\newtheorem{proposition}[theorem]{Proposition}

\theoremstyle{remark}
\newtheorem{remark}[theorem]{Remark}

\theoremstyle{definition}

\numberwithin{equation}{section}

		\title[Classification of solutions to a class of $N$-Liouville equations]{On the classification of solutions to a class of $N$-Liouville equations in $\mathbb{R}^N$}

		\author[G. Ciraolo]{Giulio Ciraolo}
		\address[G. Ciraolo]{Dipartimento di Matematica ``Federigo Enriques"\\
			Universit\`a degli Studi di Milano\\ Via Cesare Saldini 50, 20133 Milan\\
			Italy}
		\email{giulio.ciraolo@unimi.it}

		\author[P. Esposito]{Pierpaolo Esposito}
		\address[P. Esposito]{Dipartimento di Matematica e Fisica\\
			Universit\`a degli Studi Roma Tre\\ Largo S. Leonardo Murialdo 1,
			00146 Roma\\ Italy}
		\email{pierpaolo.esposito@uniroma3.it}

		\author[X. Li]{Xiaoliang Li}
		\address[X. Li]{Dipartimento di Matematica ``Federigo Enriques"\\
			Universit\`a degli Studi di Milano\\ Via Cesare Saldini 50, 20133 Milan\\
			Italy}
		\email{xiaoliang.li@unimi.it}
		
		\thanks{}
		\subjclass{35J92, 35B06, 35B08}
		
		\keywords{Liouville equation; Classification; $N$-Laplacian; $P$-function}
		
\begin{document}
		
			\begin{abstract}
			Given $N\geq 2$ and $\alpha>-1$, we consider the following  weighted Liouville-type equation involving the $N$-Laplacian:
			\begin{equation*}
				\begin{cases} 
					- \Delta_N u = |x|^{N\alpha} e^u & \text{ in } \mathbb{R}^N, \\
					\int_{\mathbb{R}^N} |x|^{N\alpha} e^u \, dx < + \infty\,.
				\end{cases}
			\end{equation*}
		Solutions are completely classified by Prajapat and Tarantello \cite{PT01} when $N=2$ via complex analysis (see also \cite{CW1994}), and by the second author when $\alpha=0$ using Pohozaev identities and an isoperimetric argument \cite{Esposito18}. 
		
		In this paper, we first devise a $P$-function approach to the classification result  \cite{PT01} for all $\alpha>-1$ when $N=2$.  Since it is not based on complex analysis,  this alternative and more PDE-oriented approach naturally extends to $N\geq 3$ by providing the classification for any $-1<\alpha\leq 0$.  In particular, the explicit radial solutions \eqref{eq:thm-u-HD} are the unique ones for $-1<\alpha\leq0$ but become degenerate for special values $\alpha_k>0$, a hint that non-radial solutions might arise for $\alpha>0$ as it happens when $N=2$.
		\end{abstract}
		
\maketitle

\section{Introduction}
In this paper, we are concerned with the problem
\begin{equation} \label{eq_general}
	\begin{cases} 
		- \Delta_N u = |x|^{N\alpha} e^u & \text{ in } \mathbb{R}^N,\\
		\int_{\mathbb{R}^N} |x|^{N\alpha} e^u \, dx < + \infty
	\end{cases}
\end{equation}
where $\alpha>-1$ is a real number, $N\geq 2$ is an integer and $\Delta_Nu=\operatorname{div}(|\nabla u|^{N-2}\nabla u)$ stands for the $N$-Laplacian operator.

We aim to address classification issues for \eqref{eq_general}. Here, solutions belong to $W_{\mathrm{loc}}^{1,N}(\mathbb{R}^N)$ and satisfy \eqref{eq_general} in a weak sense. It can be seen that \eqref{eq_general} has an explicit radial solution
\begin{equation}\label{eq:U}
	U_{\alpha}(x) :=\log\frac{c_N(\alpha+1)^N}{\left(1+|x|^{\frac{N(\alpha+1)}{N-1}}\right)^N}, \quad c_N=N\left(\frac{N^2}{N-1}\right)^{N-1}.
\end{equation}
By the scaling invariance, a family of radial solutions to \eqref{eq_general} is consequently given by
\begin{equation}\label{eq:thm-u-HD}
	U_{\alpha,\lambda}(x) := U_{\alpha}(\lambda^{\frac{N-1}{N(\alpha+1)}} x)+N (\alpha+1)\log \lambda^{\frac{N-1}{N(\alpha+1)}}=\log\frac{c_N(\alpha+1)^N \lambda^{N-1}}{\left(1+\lambda  |x|^{\frac{N(\alpha+1)}{N-1}}\right)^N}
\end{equation}
for $\lambda>0$. We are interested in whether the family $U_{\alpha,\lambda}$ exhausts all the solutions to \eqref{eq_general}. 


In the semilinear case $N=2$, note that \eqref{eq_general} is equivalent to the Liouville equation
\begin{equation}\label{eq:SLio2}
	\begin{cases} 
	 -\Delta v = e^v - 4\pi\alpha\,\delta_0 & \text{ in } \mathbb{R}^2, \\
		\int_{\mathbb{R}^2} e^v \, dx < + \infty
	\end{cases}
\end{equation}
in terms of $v=u+2\alpha\log |x|$, where $\delta_0$ denotes the Dirac measure at the origin. Problems of type \eqref{eq:SLio2} arise in conformal geometry and various branches
of physics and have been the object of many studies in recent years. In particular, analytical aspects related to \eqref{eq:SLio2} have been discussed in the context of blowup analysis and variational/topological properties; see for instance \cite{BCLT04,BT17,BM91,CGS12,CL03,CC15,Paolo25,DEM12,Li99,LS94,PT01,Tar04,Tar005,Tar05,WZ21,WZ26,Zhang09} and the references therein. The cases $\alpha=0$ and $\alpha \not=0$ are referred to as the regular and singular Liouville equation, respectively.

Here, in the regular case the first classification result for \eqref{eq:SLio2} is a direct consequence of the formula established by Liouville in \cite{Liou} on simply-connected domains. The first proof based on PDEs techniques is due to Chen and Li \cite{CL1991},  which establishes the radial symmetry of every solutions to \eqref{eq:SLio2}$_{\alpha=0}$ via the method of moving planes.  All solutions are then given by \eqref{eq:thm-u-HD} with $N=2$, up to translations (recall the translation invariance of \eqref{eq:SLio2}$_{\alpha=0}$ in this case).

Afterwards, in the context of the singular Liouvile equation the above result was extended to the case $-1<\alpha\leq0$ in \cite{CL95} using the same method, and was also revisited in \cite{BLD04,CK1995,HW06} by different techniques.  However, the radial symmetry is \emph{broken}  when $\alpha\in\mathbb{N}\setminus\{0\}$ since \eqref{eq:SLio2} also admits \emph{nonradial} solutions, as explicitly exhibited by Chanillo--Kiessling \cite{CK1994}. Nevertheless,  a complete classification of solutions to \eqref{eq:SLio2} for arbitrary $\alpha>-1$ was later obtained by Prajapat and Tarantello \cite{PT01}. To be precise, in what follows we use the complex notation $z=x+iy\in\mathbb{C}$ for $(x,y)\in\mathbb{R}^2$. Thanks to the equivalence between \eqref{eq_general}$_{N=2}$ and \eqref{eq:SLio2}, the following result was derived in \cite{PT01}.
\begin{theorem}\label{thm:2D}
	Let $N=2$ and $\alpha>-1$. Any solution $u$ to \eqref{eq_general} takes the form
	\begin{equation}\label{eq:thm-u}
		u(z)=\log \frac{8(\alpha+1)^2 \lambda^2}{(1+\lambda^2|z^{\alpha+1}+c|^2)^2}
	\end{equation}
	for some $\lambda>0$ and $c\in\mathbb{C}$, with $c=0$ if $\alpha\notin\mathbb{N}$.
\end{theorem}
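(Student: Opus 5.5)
The plan is to reproduce, for all $\alpha>-1$, the classification of Theorem \ref{thm:2D} by a $P$-function argument rather than complex analysis. We work with $v=u+2\alpha\log|x|$, which solves \eqref{eq:SLio2}, or equivalently with $u$ itself on $\mathbb{R}^2\setminus\{0\}$.

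\textbf{Step 1: asymptotics and regularity.} By Brezis--Merle type estimates and the finiteness of $\int |x|^{2\alpha}e^u$, we get $u\le C$ and
\[
u(x)=-\frac{\beta}{2\pi}\log|x|+O(1)\quad\text{at infinity},\qquad \beta=\int|x|^{2\alpha}e^u .
\]
We also get $|x\cdot\nabla u+\beta/2\pi|\to0$ and, near $0$, $u$ is $C^{1,\gamma}$ with $|\nabla u|=O(|x|^{\min(0,\dots)})$ controlled behaviour. A Pohozaev identity on large balls then gives $\beta=8\pi(\alpha+1)$.

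\textbf{Step 2: the key quantity.} Introduce the transformed function $w=e^{-u/2}$, or better $\rho=(|x|^{2\alpha}e^u)^{-1/2}$, in analogy with the explicit profile \eqref{eq:thm-u}: there $e^{-v/2}$ is, up to constants, $1+\lambda^2|z^{\alpha+1}+c|^2$. We then build the traceless tensor
\[
E=\nabla^2 w-\tfrac12\Delta w\,\mathrm{Id}
\]
(corrected by the $\alpha$-dependent term $\alpha\log|x|$ via $v$). In complex terms this amounts to $v_{zz}-\tfrac12 v_z^2$, which for \eqref{eq:SLio2} is holomorphic away from $0$. This holomorphy is the only complex fact hidden in Prajapat--Tarantello; here we only use that a suitable $P$-function, namely $P=|E|^2$ weighted by a power of $e^{v}$, satisfies an elliptic differential inequality. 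Concretely, a direct computation from $-\Delta v=e^v$ shows that $\operatorname{div}(e^{-v}\dots)$ of a vector field built from $E$ is a nonnegative square, i.e. $\operatorname{div}\,(e^{-v}E\nabla v)\ge 0$ up to lower-order terms, with equality iff $E\equiv$ the model value.

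\textbf{Step 3: integration.} We integrate this identity on annuli $\{\varepsilon<|x|<R\}$. The asymptotics of Step 1, combined with $\beta=8\pi(\alpha+1)$, make the boundary terms at $R\to\infty$ vanish. At $\varepsilon\to0$ the boundary terms cancel exactly the $\alpha$-contribution coming from the singularity. This forces the square to vanish, which gives $v_{zz}-\tfrac12v_z^2=\alpha(\alpha+2)/(-2z^2)\cdot(\tfrac12)$, i.e. the Schwarzian-type relation of the explicit family.

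\textbf{Step 4: solving the relation.} Solving the resulting second-order ODE relation (in radial and angular variables) yields $e^{-v/2}=\lambda^{-1}|z|^{-\alpha}(1+\lambda^2|z^{\alpha+1}+c|^2)$. Single-valuedness around $0$ forces $c=0$ unless $\alpha\in\mathbb{N}$.

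\textbf{Main obstacle.} The hardest step is Step 3, namely controlling the boundary terms at the origin for all $\alpha>-1$. For $\alpha>0$ non-integer the gradient behaviour near $0$ is delicate, since the solution may not be radial a priori, so local expansions of $u$ near $0$ are required. I would first try to obtain a refined expansion $v=2\alpha\log|z|+h$ with $h$ of class $C^{2}$ near $0$, using the $C^{1,\gamma}$ regularity of $u$ and elliptic estimates on $|x|^{2\alpha}e^u$, and then check that the flux through $\partial B_\varepsilon$ converges to the correct constant.
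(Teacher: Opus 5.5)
\textbf{There is a genuine gap.} Your Steps 2--3 carry the whole proof, but they never state a concrete identity, and what they describe is not the mechanism that works. Below, $\tilde u=u+2\alpha\log|x|$ is what you call $v$, and $v=e^{-u/2}$ as in the paper.

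\textbf{The missing identity.} The subharmonic quantity is not a weighted $|E|^2$. It is $P=\Delta_g v=v^{-1}\bigl(|\nabla_g v|_g^2+\tfrac12\bigr)$ in the metric $g=|x|^{2\alpha}\delta_e$. For this $P$ one has the \emph{exact} identities $\nabla_g P=2v^{-1}E\,\nabla_g v$ and $\Delta_g P=2v^{-1}|E|_g^2$ (see \eqref{eq:Delta-g-P}). Here $E=H_g(v)-\frac P2 g$, and $E=0$ is equivalent to $u_{zz}-\frac12u_z^2-\alpha u_z z^{-1}=0$. An inequality that holds only ``up to lower-order terms'' has no sign, so it gives nothing.

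Working in $g$ absorbs the weight. There is therefore no ``$\alpha$-contribution'' at the origin to cancel; all that is needed is that $P$ be bounded near $0$ and near $\infty$.

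\textbf{The flux at infinity.} Even with the exact identity, integrating over annuli and letting the fluxes vanish does not follow from your Step 1. At infinity you only get $|\nabla P|=O(|x|^{-1})$ (using \eqref{1853}), so the flux through $\partial B_R$ is $O(1)$, not $o(1)$. The paper instead multiplies the inequality for $\operatorname{div}_g(P^{t-1}\nabla_g P)$, $t>\frac12$, by a cut-off $\varphi^2$ and applies Cauchy--Schwarz twice (Lemma~\ref{lem:intergal-idenitity-Pt}, Proposition~\ref{pro:P0}). In dimension two, $\int|\nabla_g\varphi|_g^2\,d\mathscr{V}_g=O(1)$ on the cut-off annuli, so boundedness of $P$ is enough.

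\textbf{The origin when $\alpha>0$.} This is the real hole; you flag it but do not resolve it. Near $0$, $P=\frac14|x|^{-2\alpha}e^{-u/2}|\nabla u|^2+\frac12e^{u/2}$ is bounded if and only if $\nabla u=O(|x|^\alpha)$ (Proposition~\ref{pro:refine-gradient}).

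A priori, Lemma~\ref{lem:refined} applied to the Kelvin transform allows $u-u(0)$ to contain a mode $A|x|^k\sin(k\theta+\theta_0)$ with $1\le k<\alpha+1$. In that case $P\sim c|x|^{2(k-1-\alpha)}\to+\infty$, and neither the boundary fluxes nor the cut-off terms at $\partial B_\varepsilon$ are controlled.

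Excluding such modes is a rigidity statement, not a regularity statement. Your ``$h\in C^2$ near $0$'' is just $u\in C^2$, which holds automatically for $\alpha\ge0$ and does not exclude them. The paper obtains the exclusion from Lemma~\ref{lem:refined} together with two facts:
\begin{itemize}
\item $\tilde u_{zz}-\frac12\tilde u_z^2$ is holomorphic on $\mathbb{C}\setminus\{0\}$;
\item this function is $O(|z|^{-2})$ at $0$ and at $\infty$.
\end{itemize}
These force \eqref{eq:identity-uzz}, which leaves only $k=\alpha+1$, and only when $\alpha\in\mathbb{N}$. Note that \eqref{eq:identity-uzz} is already $E\equiv0$. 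So for $\alpha\ge0$ it is this complex-analytic input, not any $P$-function, that settles the matter at the origin. For $-1<\alpha\le0$ the origin causes no difficulty.

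\textbf{Minor points.}
\begin{itemize}
\item The correct relation is $\tilde u_{zz}-\frac12\tilde u_z^2=-\frac{\alpha(\alpha+2)}{2z^2}$, not $-\frac{\alpha(\alpha+2)}{4z^2}$.
\item Step 4 is not an ODE. It is the equation $w_{zz}=\frac{\alpha(\alpha+2)}{4z^2}w$ for $w=e^{-\tilde u/2}$, which requires a Chou--Wan type integration. The paper avoids this: it uses $\Delta v=P_0|x|^{2\alpha}$ to reduce to a harmonic function continuous at $0$, and then reads off from the $v_{12}$-relation that only the modes $k\in\{0,\alpha+1\}$ survive.
\end{itemize}
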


When $\alpha\in\mathbb{N}\setminus\{0\}$,  note that the function \eqref{eq:thm-u} is not radially symmetric about any point when $c\neq 0$,  as observed first in \cite{CK1994}.  The proof of Theorem \ref{thm:2D} in \cite{PT01} crucially exploits the extension of the Liouville formula \cite{Liou} to a punctured disc obtained by Chou and Wan \cite{CW1994}.

However, to the best of our knowledge, a PDE proof of Theorem \ref{thm:2D} is still unavailable in literature, as well as a classification result for problem \eqref{eq_general} in the quasilinear case $N\geq 3$ remains completely out of reach, except in the special case $\alpha=0$. Indeed, the second author showed in \cite{Esposito18} that all solutions to \eqref{eq_general}$_{\alpha=0}$ are given by \eqref{eq:thm-u-HD} up to translations. The result in \cite{Esposito18} was achieved by performing first a fine asymptotic analysis at infinity for every solution $u$ and then exploiting a combination of Pohozaev identities with an isoperimetric argument, and was subsequently extended to an anisotropic setting in \cite{CL24}. However, such argument fails for classification issues on \eqref{eq_general} when $\alpha\neq0$ but still allows to determine in \cite{Esposito21} the total mass $\int_{\mathbb{R}^N} |x|^{N\alpha} e^u \, dx$ of any solution $u$ of \eqref{eq_general}, as we will discuss more later.

On the other hand, the Liouville equation
\begin{equation}\label{eq:Lio}
	-\Delta u=e^u \quad\text{ in } M
\end{equation} 
has recently attracted attention in \cite{CaiLai24,CM24,CFP24} on \emph{complete} Riemmanian surfaces $M$ with \emph{nonnegative} Ricci curvature. In particular, the first author, Farina and Polvara \cite{CFP24} established a classification result for \eqref{eq:Lio} under a logarithmic lower bound assumption on the solutions (\emph{instead of} the finite-mass condition) and also derived the resulting rigidity of $M$.  Simultaneously, they obtained parallel results for the critical equation on Riemmanian manifolds of dimension $N\geq 3$. These results were obtained in \cite{CFP24} through a unified approach based on the introduction of a suitable $P$-function. Very recently, this approach has also been used in \cite{CP25} to study the classification of extremals for the Caffarelli--Kohn--Nirenberg inequalities, and has been exploited \emph{quantitatively} in \cite{CG25} to address the stability of classification results for solutions to the critical $p$-Laplace equation. We also refer to \cite{CFP25} for a recent extension of this approach to some more general settings.

Inspired by \cite{CFP24,CG25,CP25}, in the present paper we investigate problem \eqref{eq_general} by exploiting a $P$-function approach, which enables us to completely classify its solutions for any $-1<\alpha\leq 0$ when $N\geq 3$ as stated in the following result.
\begin{theorem} \label{thm:HD}
	Let $N\geq 3$ and $-1<\alpha\leq0$. Let $u\in W_{\mathrm{loc}}^{1,N}(\mathbb{R}^N)$ be a solution to \eqref{eq_general}. Then $u$ is of the form $U_{\alpha,\lambda}$ defined in \eqref{eq:thm-u-HD} for some $\lambda>0$ (up to a translation when $\alpha=0$).
\end{theorem}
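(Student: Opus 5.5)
We will follow the $P$-function strategy of \cite{CFP24,CG25,CP25}, adapted to the weight $|x|^{N\alpha}$. The first step is the change of unknown suggested by the explicit solutions \eqref{eq:thm-u-HD}. Set
\[
v:=e^{-\frac{u}{N}},
\]
or equivalently write $e^{u}=c\,w^{-N}$ with $w$ playing the role of $1+\lambda|x|^{\frac{N(\alpha+1)}{N-1}}$. For $U_{\alpha,\lambda}$ the quantity $|\nabla w|^{\frac{N}{N-1}}$, suitably divided by the right power of $|x|$, is an explicit affine function of $w$. This motivates a $P$-function of the form
\[
P:=|x|^{-\frac{N\alpha}{N-1}}\,|\nabla u|^{\frac{N}{N-1}}\,e^{-\frac{u}{N}}\cdot(\text{power}) + c'\,|x|^{\gamma}e^{\frac{u}{N-1}}\cdots ,
\]
calibrated so that $P$ is constant exactly on $U_{\alpha,\lambda}$. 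Before any computation we collect the a priori information. By \cite{Esposito21} the mass is $\int|x|^{N\alpha}e^u=c_N\omega_N(\alpha+1)^{N-1}$, and $u(x)=-\frac{N^2(\alpha+1)}{N-1}\log|x|+O(1)$ at infinity. Near $0$, $u$ is bounded, since $\alpha>-1$ makes the weight integrable and the standard Serrin-type estimates apply. Gradient asymptotics of the form $|x|\nabla u\to -\frac{N^2(\alpha+1)}{N-1}\frac{x}{|x|}$ should follow by scaling and $C^{1,\beta}$ estimates, as in \cite{Esposito18}.

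The core step is a differential inequality for the vector field $a(\nabla u)=|\nabla u|^{N-2}\nabla u$, in the spirit of the Serrin–Zou / \cite{CG25} integral identity. Multiply the equation by suitable test vector fields and integrate by parts against the weight $e^{\beta u}|x|^{\gamma}$, with $\beta,\gamma$ chosen to match the $P$-function. The goal is an identity of the form
\[
\int_{\mathbb{R}^N}\varphi\,e^{\beta u}|x|^{\gamma}|\nabla u|^{N-2}\Big|\,\nabla a(\nabla u)\ \text{traceless part}\Big|^2\;+\;(-\alpha)\,\mathcal{R}\;\le\;\text{boundary terms}\to 0 .
\]
Here the traceless part is taken with respect to the metric induced by the linearization of $\Delta_N$, and $\mathcal{R}\ge0$ collects the terms produced by differentiating the weight $|x|^{N\alpha}$. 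The sign of $\alpha$ enters exactly there. For $\alpha\le0$ the extra term involves $x\cdot\nabla u$-type quantities whose sign is favourable, because the weight is radially nonincreasing. This is where the restriction $-1<\alpha\le0$ enters.

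Regularity is an issue: $u$ is only $C^{1,\beta}$ and $a(\nabla u)\in W^{1,2}_{loc}$ away from critical points and the origin. We will work with the regularized operator, or use the known fact that $|\nabla u|^{N-2}\nabla u\in W^{1,2}_{loc}$, and cut off near $0$ and near $\{\nabla u=0\}$. The cutoff $\varphi=\varphi_R$ is removed by letting $R\to\infty$, using the decay from the first step. Near the origin the cutoff terms vanish because $\alpha>-1$.

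Once all error terms vanish, the traceless Hessian-type term is zero. Hence $\nabla a(\nabla u)$ is pointwise a multiple of an explicit tensor, and when $\alpha<0$ also $\mathcal{R}=0$. This forces $x$ to be parallel to $\nabla u$ (radial symmetry about the origin). In the case $\alpha=0$ we get only symmetry about some point, which gives the translation freedom. The radial ODE then integrates explicitly to $U_{\alpha,\lambda}$, using the mass and the regularity at the origin to fix constants.

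The main obstacle is the second step. We must find exponents making the weighted identity close with a nonnegative remainder in the quasilinear setting, and justify the integrations by parts given the degeneracy at critical points and the singular weight at $0$. Here we would first try to mimic \cite{CG25} after the Kelvin-type change of variables $t=|x|^{\alpha+1}$, which formally converts the weight to the $\alpha=0$ case at the cost of an anisotropic-in-radius metric. We would check that the resulting error term has a sign precisely when $\alpha\le0$.
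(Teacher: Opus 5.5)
\paragraph{Gap: the central identity is never constructed.} Your architecture matches the paper's. You use $v=e^{-u/N}$, a $P$-function constant on $U_{\alpha,\lambda}$, and a Bochner-type identity whose extra term has a sign only for $\alpha\le 0$. You cut off at $0$, at $\infty$ and on the critical set, and rigidity forces $\nabla u\parallel x$ when $\alpha<0$. But the step that carries the proof is only stated as a goal. You never fix $P$: your ansatz contains ``(power)'' and ``$\cdots$'', and constancy on $U_{\alpha,\lambda}$ does not single out a quantity that satisfies an elliptic inequality. You also never derive the identity with a nonnegative remainder.

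The missing idea is that the weight can be removed exactly. Since $\Delta_N$ is conformally covariant in dimension $N$, the metric $g=|x|^{2\alpha}\delta_e$ gives $\Delta_N^g u=|x|^{-N\alpha}\Delta_N u$. So $-\Delta_N^g u=e^u$ on $M=\mathbb{R}^N\setminus\{0\}$ with no weight at all. Then
\[
P:=\Delta_N^g v=v^{-1}\big[(N-1)|x|^{-N\alpha}|\nabla v|^N+N^{1-N}\big],
\]
built on $|x|^{-N\alpha}|\nabla v|^N$ rather than on $|\nabla u|^{N/(N-1)}$, satisfies
\[
\nabla_i\big(v^{2-N}A^{ij}P_j\big)=N(N-1)v^{1-N}\big[\mathrm{Tr}(E^2)+(\mathrm{Ric}_g)_{ij}a^ia^j\big],\qquad (\mathrm{Ric}_g)_{ij}=\frac{(N-2)\big(1-(\alpha+1)^2\big)}{|x|^2}\Big(\delta_{ij}-\frac{x_ix_j}{|x|^2}\Big).
\]
So there is no weight left to differentiate. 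The remainder is a curvature term equal to $(N-2)(-\alpha)(\alpha+2)|x|^{-2}$ times the squared tangential component of $a$. It is not an $x\cdot\nabla u$ quantity made favourable by monotonicity of the weight.

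Your idea of substituting $|x|\mapsto|x|^{\alpha+1}$ does point to this geometry. The map $x\mapsto|x|^\alpha x$ sends $(M,g)$ isometrically onto the cone $d\rho^2+(\alpha+1)^2\rho^2d\theta^2$, whose Ricci tensor is the one above. But you would still have to carry out the Bochner computation there.

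\paragraph{The cut-off terms cannot simply be sent to zero.} Even with the identity in hand, ``boundary terms $\to 0$'' is not available. The cut-off terms $\int v^{2-N}A^{ij}P_j\varphi_i$ contain $\nabla P$, i.e.\ second derivatives of $u$, and you have no decay information on those.

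The paper closes this as follows. It multiplies by $P^{t-1}$ and proves the pointwise bound
\[
A^{ij}P_iP_j\le N^2(N-1)v^{-2}|\nabla_g v|_g^N\,\mathrm{Tr}(E^2)\le N^2v^{-1}P\,\mathrm{Tr}(E^2).
\]
Absorbing with this gives $\nabla_i(v^{2-N}A^{ij}P^{t-1}P_j)\ge(t-\tfrac1N)P^{t-2}v^{2-N}A^{ij}P_iP_j$. Testing with $\varphi^2$ and applying Cauchy--Schwarz then reduces all errors to $\int P^tv^{2-N}|\nabla_gv|_g^{N-2}|\nabla_g\varphi|_g^2$, which involves only first derivatives.

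Near the origin, ``because $\alpha>-1$'' is not enough when $\alpha\le-\frac1N$, since $\nabla u$ may then be unbounded at $0$. There one needs the integrability $|\nabla u|^{N-1}\in L^{\frac{Np}{N-p}}_{\mathrm{loc}}$ for $p<-\frac1\alpha$, together with a choice of $t$ slightly above $\frac1N$. Also, local boundedness of $u$ at $0$ is not a Serrin estimate alone, since a priori $|x|^{N\alpha}e^u\in L^1$ only; it requires the result of \cite{Esposito21}.

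With these ingredients your last step goes through as you describe: $E=0$ and $\mathrm{Ric}_g(a,a)=0$ give $a=\frac{P_0}{N(\alpha+1)}x$ for $\alpha<0$, and $a=\frac{P_0}{N}(x-x_0)$ for $\alpha=0$, after which $v$ is integrated explicitly.
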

In particular, for $\alpha=0$ we provide an alternative proof of the above-mentioned result \cite{Esposito18}.  Moreover,  the same approach also works in the case $N=2$ and allows us to obtain a complete classification of \eqref{eq_general}$_{N=2}$ in the full range $\alpha>-1$, thereby yielding an alternative proof of Theorem \ref{thm:2D} from a more PDE-oriented viewpoint.

The proofs rely on introducing a suitable $P$-function associated with the solution $u$ and demonstrating its constancy, which yields the desired classification of $u$. To this aim, we start with a conformal reformulation of problem \eqref{eq_general}, which removes the presence of the weighted term $|x|^{N\alpha}$ and turns \eqref{eq_general} into the prototypical form as represented by \eqref{eq:Lio} when $N=2$. More precisely, let us define the conformal metric
 \begin{equation}\label{eq:def-g}
 	g=|x|^{2\alpha}\delta_e \,,
 \end{equation}
with $\delta_e$ the standard Euclidean metric on $\mathbb{R}^N$. It is derived from \eqref{eq_general} that $$-\Delta_N^g u =e^u \quad\text{ in }\mathbb{R}^N\setminus\{0\},$$
where $\Delta_N^g$ stands for the $N$-Laplace operator with respect to the metric $g$ (see \eqref{eq:N-Lap-g-u}). Then by letting $v=e^{-\frac{u}{N}}$, we find that the desired $P$-function can be defined as
\begin{equation}\label{eq:def-P}
	P:=\Delta_N^g v \,.
\end{equation}
Indeed, we deduce that $P$ given by \eqref{eq:def-P} satisfies a differential identity involving the Ricci tensor in the (\emph{noncomplete}) Riemannian manifold $(\mathbb{R}^N\setminus\{0\}, g)$; see Lemma \ref{lem:identity-P}. This identity can be further translated into an integral inequality as shown in Lemma \ref{lem:intergal-idenitity-Pt}. Then by taking suitable cut-off functions in this inequality, we conclude in Proposition \ref{pro:P0} that $P$ is constant. Finally, we prove that the constancy of $P$ totally determines the solution $u$ to take the form \eqref{eq:thm-u-HD} (or \eqref{eq:thm-u}) under the assumptions of Theorem \ref{thm:HD} (or Theorem \ref{thm:2D}).

 Notably, it is necessary to assume $\alpha>-1$ in order for problem \eqref{eq_general} to admit a solution, whenever $N\geq 2$. This was proved in \cite[Theorem 1.4]{Esposito21} as a consequence of a quantization property established there, which states that any solution $u$ to \eqref{eq_general} satisfies
\begin{equation}\label{eq:quantization}
	\int_{\mathbb{R}^N} |x|^{N\alpha} e^u\,dx=c_N \omega_N (\alpha+1)^{N-1}
\end{equation}
where $\omega_N$ denotes the volume of the unit ball in $\mathbb{R}^N$. Property \eqref{eq:quantization} imposes a rigid constraint on the solvability of \eqref{eq_general}, suggesting the presence of an underlying structural feature of the solution $u$ whenever $\alpha>-1$. 

However, in the classification arguments described above, the nonnegativity of the Ricci curvature $\mathrm{Ric}_g$ of the metric $g$ plays an essential role. This condition always holds when $N=2$ (in fact, $\mathrm{Ric}_g\equiv 0$ in this case), but it requires $\alpha\leq 0$ whenever $N\geq 3$, as can be seen explicitly from \eqref{eq:Ric}. Accordingly, in Theorem \ref{thm:HD} we restrict to the range $-1<\alpha\leq 0$.

It is natural to ask whether the classification result in Theorem \ref{thm:HD} can be extended to the case $\alpha>0$. Recall that when $N=2$, the structure of solutions to \eqref{eq_general} changes drastically, passing from the necessity of  the radial family (when $\alpha\notin\mathbb{N}\setminus\{0\}$) to the occurrence of a nonradial branch (when $\alpha\in\mathbb{N}\setminus\{0\}$), as shown in Theorem \ref{thm:2D}. To understand whether such a nonradial bifurcation may persist in higher dimensions, we study the linearized problem associated with \eqref{eq_general} (see \eqref{eq:Linearized}) and characterize \emph{all} its solutions; see Theorem \ref{thm:linearized}. This result shows that the radial solution $U_\alpha$ given by \eqref{eq:U} is \emph{degenerate} when $\alpha=\alpha_k>0$ for some $k\in\mathbb{N}\setminus\{0,1\}$, where
\begin{equation}\label{eq:alpha-k}
	\alpha_k=\sqrt\frac{k^2+(N-2)k}{N-1} -1 \,,
\end{equation}
in the sense that the elements in the kernel of the linearized operator are \emph{not only} those arising from the invariance of problem \eqref{eq_general} under scaling. Such degeneracy may indicate the existence of nonradial solutions to \eqref{eq_general} for these special values of $\alpha$ given by \eqref{eq:alpha-k}, as indeed occurs when $N=2$. However, a further investigation of this issue goes beyond the scope of the present paper and will be addressed in our future work.

The remainder of the paper is organized as follows. Section \ref{sec:pre} collects auxiliary results concerning the regularity and asymptotic behavior of solutions to \eqref{eq_general}. In Section \ref{sec:P}, we analyze the properties of the $P$-function introduced in \eqref{eq:def-P}. Then in Section \ref{sec:pf}, we provide the proofs of Theorems \ref{thm:2D} and \ref{thm:HD}. Finally, in Section \ref{sec:linear}, we study the linearized problem associated with \eqref{eq_general}.

\section{Preliminaries} \label{sec:pre}
We present here some results about the regularity and asymptotic behavior at infinity (and at the origin when $N=2$) of solutions to \eqref{eq_general}, which will be useful in the following sections. In particular, we obtain in Proposition \ref{pro:refine-gradient} a fine decay estimate for the gradient of the solutions near the origin when $N=2$ and $\alpha>0$. Throughout the section, we let $u$ be a solution to \eqref{eq_general}.

\begin{lemma}\label{lem:pre}
Let $N\geq2$ and $\alpha>-1$. Then $u\in L^\infty_{\mathrm{loc}}(\mathbb{R}^N)\cap C^{1,\beta}_{\mathrm{loc}}(\mathbb{R}^N\setminus\{0\})$ for all $\beta\in(0,1)$ and $|\nabla u|^{N-2}\nabla u\in W_{\mathrm{loc}}^{1,2}(\mathbb{R}^N\setminus\{0\})$. Also, if $\alpha>-\frac1N$, then $u\in C^{1,\beta}_{\mathrm{loc}}(\mathbb{R}^N)$; if $-1<\alpha\leq-\frac{1}{N}$, then 
\begin{equation}\label{eq:KM-gradient}
	|\nabla u|^{N-1}\in L^{\frac{Np}{N-p}}_{\mathrm{loc}}(\mathbb{R}^N)\quad\text{ for any }1<p<-\frac{1}{\alpha} \,.
\end{equation}
Moreover, there hold
\begin{equation}\label{eq:asym-u}
u+\frac{N^2(\alpha+1)}{N-1}\log |x| \in L^{\infty}(B_1^c)
\end{equation}
and 
\begin{equation}\label{eq:asym-grad}
	\lim _{|x| \rightarrow +\infty}\left(|x| \nabla u(x)+ \frac{N^2(\alpha+1)}{N-1}\frac{x}{|x|}\right)=0\,.
\end{equation}
\end{lemma}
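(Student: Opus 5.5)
The plan is to treat $f:=|x|^{N\alpha}e^u$ as a given nonnegative right-hand side in $L^1(\mathbb{R}^N)$ and to run the standard quasilinear Liouville machinery: first Brezis--Merle type exponential integrability, then local boundedness, then regularity, and finally the asymptotics at infinity via a Kelvin-type inversion.

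\emph{Step 1: local boundedness.} On a ball $B_{2R}$, I split $u=h+w$, where $-\Delta_N w=f$ with $w=0$ on $\partial B_{2R}$ and $h$ is $N$-harmonic. The Brezis--Merle inequality for the $N$-Laplacian (Aguilar--Peral, or the version used in \cite{Esposito18,Esposito21}) gives $\int e^{\gamma |w|}<\infty$ for $\gamma<\gamma_N/\|f\|_{L^1(B_{2R})}^{1/(N-1)}$. To make the mass small, I choose $R$ small around each point. Since $u^+$ is controlled in $L^1$ by the $W^{1,N}_{\rm loc}$ assumption, $h$ is bounded above on $B_R$. Hence $e^u\in L^q(B_R)$ for some large $q$ (in fact for all $q$, after iterating). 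Hölder's inequality together with $|x|^{N\alpha}\in L^{s}_{\rm loc}$ for every $s<-1/\alpha$ (when $\alpha<0$) then gives $f\in L^{p}_{\rm loc}$ for every $p<-1/\alpha$; when $\alpha\ge0$ this holds for all $p$. Since $p>1$, Serrin's estimates yield $u\in L^\infty_{\rm loc}(\mathbb{R}^N)$.

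\emph{Step 2: regularity.} Away from $0$ we have $f\in L^\infty_{\rm loc}$, so DiBenedetto/Tolksdorf give $C^{1,\beta}_{\rm loc}(\mathbb{R}^N\setminus\{0\})$. Moreover, $|\nabla u|^{N-2}\nabla u\in W^{1,2}_{\rm loc}$ follows from the Lou / Cianchi--Maz'ya second order estimates for $-\Delta_N u=f\in L^\infty_{\rm loc}$. Near $0$, if $\alpha>-1/N$ then $f\in L^p$ with $p>N$, and Lieberman's estimates give $C^{1,\beta}$ for all $\beta<1$; note that $\beta$ is arbitrary only if $p$ can be taken large enough, which may require checking. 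If instead $-1<\alpha\le -1/N$, we only have $f\in L^p$ with $p<-1/\alpha\le N$, and the Kuusi--Mingione potential/Lorentz estimates for the gradient, $|\nabla u|^{N-1}\in L^{Np/(N-p)}$, give \eqref{eq:KM-gradient}.

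\emph{Step 3: asymptotics at infinity.} This is the main obstacle. My approach is to adapt \cite{Esposito18,Esposito21}, or equivalently Kichenassamy--Véron, to the weight. First, Brezis--Merle applied on annuli, combined with Harnack, gives $u\le C$ away from the origin and the upper bound $u\le -\gamma\log|x|+C$ for some $\gamma$. Then I compare $u$ with $-\frac{M^{1/(N-1)}}{\omega}\log|x|$, where $M=\int f$, via the fundamental solution; here I would use \eqref{eq:quantization} from \cite{Esposito21}, which gives $M^{1/(N-1)}/(N\omega_N)^{1/(N-1)}=N^2(\alpha+1)/(N-1)$ after computing $c_N$. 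Finiteness of the mass forces the decay exponent to exceed $N(\alpha+1)$, so $|x|^{N\alpha}e^u=O(|x|^{-N-\delta})$. The Kelvin-type transform $\hat u(y)=u(y/|y|^2)-\gamma\log|y|$ then satisfies an equation with an $L^p$ right-hand side near $0$ with an isolated singularity. Serrin--Kichenassamy--Véron then give $\hat u$ bounded, which yields \eqref{eq:asym-u}.

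\emph{Step 4: gradient limit.} For \eqref{eq:asym-grad}, I rescale $u_R(y)=u(Ry)+\gamma\log R$ on annuli $1/2<|y|<2$. These functions are bounded, with right-hand side $o(1)$, so by $C^{1,\beta}$ compactness any limit is $N$-harmonic, bounded, and of the form $-\gamma\log|y|+O(1)$ globally in the limit. By Kichenassamy--Véron uniqueness the limit is $-\gamma\log|y|+c$, which gives $R\nabla u(Ry)\to-\gamma y/|y|^2$ uniformly.
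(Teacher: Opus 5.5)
\textbf{There is a genuine gap in Step 3, the asymptotics.} Your Steps 1 and 2 essentially reproduce what the paper does, which is to cite \cite{Esposito21} for local boundedness, Serrin/DiBenedetto/Tolksdorf and \cite{ACF23} for the regularity, and \cite{KM14} for \eqref{eq:KM-gradient}. The substantive part of the lemma, however, is \eqref{eq:asym-u}--\eqref{eq:asym-grad}, and Step 3 does not establish it.

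\begin{itemize}
\item The scaling and $\varepsilon$-regularity argument on dyadic annuli, applied to $u(Ry)+N(\alpha+1)\log R$, at best gives the borderline bound $u(x)\le -N(\alpha+1)\log|x|+C$.
\item An upper bound alone says nothing about the decay exponent. So ``finiteness of the mass forces the decay exponent to exceed $N(\alpha+1)$'' presupposes the lower bound $u\ge-\gamma\log|x|-C$. That lower bound is half of \eqref{eq:asym-u}, so the argument is circular.
\item Two of your manipulations use linearity and fail for $N\ge3$. There is no representation of $u$ through the fundamental solution that would let you ``compare'' it with $-\gamma\log|x|$. Also, $u(y/|y|^2)-\gamma\log|y|$ satisfies no usable equation, because $\Delta_N$ is not additive. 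That normalization is legitimate only for $N=2$, which is why the paper uses $\hat u$ in that form only there.
\end{itemize}
As a result, the Serrin/Kichenassamy--V\'eron theory, which needs an $L^p$ right-hand side with $p>1$ near the singular point, is never actually made applicable.

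\textbf{How to close the gap.} What is needed is the isolated-singularity theory for the $N$-Liouville equation with merely finite mass from \cite{Esposito21}; this is what the paper imports from the proof of \cite[Theorem 1.4]{Esposito21}.
\begin{itemize}
\item By conformal invariance of $\Delta_N$ in $\mathbb{R}^N$, the function $\hat u(y)=u(y/|y|^2)$, with no logarithm subtracted, solves $-\Delta_N\hat u=|y|^{-N(\alpha+2)}e^{\hat u}$ in $B_1\setminus\{0\}$. Its mass is finite: $\int_{B_1}|y|^{-N(\alpha+2)}e^{\hat u}\,dy=\int_{B_1^c}|x|^{N\alpha}e^u\,dx$.
\item That theory yields the two-sided bound $\hat u=\gamma\log|y|+O(1)$ near $0$. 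Only then does finite mass give $\gamma>N(\alpha+1)$ and an $L^p$ right-hand side.
\item After that, your Step 4 is a sound blow-down argument. The Liouville property it needs does hold. Let $w$ be $N$-harmonic in $\mathbb{R}^N\setminus\{0\}$ with $h=w+\gamma\log|y|$ bounded. Comparison with radial $N$-harmonic functions on annuli shows $\max_{|y|=r}h$ is convex in $\log r$; being bounded, it is constant. Strong comparison with $-\gamma\log|y|+c$, whose gradient never vanishes, then forces $h$ to be constant.
\item Finally, the flux identity $N\omega_N\gamma^{N-1}=\int_{\mathbb{R}^N}|x|^{N\alpha}e^u\,dx$ together with \eqref{eq:quantization} gives $\gamma=\frac{N^2(\alpha+1)}{N-1}$, as you computed.
\end{itemize}

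\textbf{Two smaller points.} In Step 1, the splitting $u=h+w$ with $h$ $N$-harmonic is likewise unavailable for $N\ge3$. It is also unnecessary: $u\in W^{1,N}_{\rm loc}$ already gives $e^{q|u|}\in L^1_{\rm loc}$ for every $q<\infty$ by Trudinger--Moser, and then H\"older plus Serrin give $u\in L^\infty_{\rm loc}$.

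Your hesitation about arbitrary $\beta$ is justified. For $N\ge3$ and $\alpha=0$, the solution $U_0(\cdot-x_0)$ with $x_0\ne0$ satisfies $|\nabla U_0(x-x_0)|\sim\frac{N^2}{N-1}|x-x_0|^{\frac1{N-1}}$ near $x_0$. So it is not $C^{1,\beta}$ there for $\beta>\frac1{N-1}$. The cited regularity theory only gives some $\beta>0$, and that is all the paper uses later.
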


\begin{proof} By \cite[Theorem 1.1]{Esposito21} we know that $u\in L^\infty_{\mathrm{loc}}(\mathbb{R}^N)$, and then $u\in C^{1,\beta}_{\mathrm{loc}}(\mathbb{R}^N\setminus\{0\})$ for all $\beta\in(0,1)$ follows by the classical regularity results in \cite{DiBen1983,Serrin1964,Tolksdorf1984}. The property $|\nabla u|^{N-2}\nabla u\in W_{\mathrm{loc}}^{1,2}(\mathbb{R}^N\setminus\{0\})$ follows from, for instance, \cite[Theorem 1.1]{ACF23}.

If $\alpha>-\frac1N$, then $|x|^{N\alpha}e^u\in L^p_{\mathrm{loc}}(\mathbb{R}^N)$ for some $p>N$, which implies $u\in C^{1,\beta}_{\mathrm{loc}}(\mathbb{R}^N)$ by \cite[Theorem 1.4]{ACF23}.  If $-1<\alpha\leq-\frac{1}{N}$, then $|x|^{N\alpha}e^u\in L^p_{\mathrm{loc}}(\mathbb{R}^N)$ for any $1<p<-\frac{1}{\alpha}\leq N$, so one gets \eqref{eq:KM-gradient} by the regularity result in \cite[Corollary 1--(C7)]{KM14} with both $q$ and $\gamma$ therein taken as $p$ here and by the H\"older's inequality for Lorentz spaces.
	
	Regarding \eqref{eq:asym-u}-\eqref{eq:asym-grad}, they can be found in the proof of \cite[Theorem 1.4]{Esposito21} by taking \eqref{eq:quantization} into account.
\end{proof}

In particular, when $N=2$ the asymptotic behavior \eqref{eq:asym-u} can be improved. Indeed, the function
$$\hat{u}(x):=u(\frac{x}{|x|^2})-4(\alpha+1)\log|x|$$ is a solution of \eqref{eq_general}$_{N=2}$ in $\mathbb{R}^2 \setminus \{0\}$ which can be continuously extended to the origin as a solution of \eqref{eq_general}$_{N=2}$ in $\mathbb{R}^2$, see for instance \cite[Lemma 2.1 and Remark 2.2]{PT01}, and then
\begin{equation}\label{eq:asym-lim}
	\lim_{|x|\to+\infty}(u(x)+4(\alpha+1)\log|x|)=\hat{u}(0)\,.
\end{equation}
Since $u_r(y)=u(ry)+4(\alpha+1)\log r$ and $-\Delta u_r=\frac{|y|^{2\alpha}e^{u_r}}{r^{2(\alpha+1)}} $ are uniformly bounded in $C_{\mathrm{loc}}(\mathbb{R}^2\setminus \{0\})$ as $r \to +\infty$ in view of \eqref{eq:asym-lim}, by elliptic estimates $u_r$ is uniformly bounded in $C^2_{\mathrm{loc}}(\mathbb{R}^2\setminus \{0\})$ and then
\begin{equation}\label{1853}
|x||\nabla u(x)|+|x|^2 |D^2 u(x)|=|\nabla u_r(\frac{x}{|x|})|+|D^2 u_r(\frac{x}{|x|})|=O(1)
\end{equation}
as $|x|\to \infty$, where $r=|x|$.

Furthermore, we have the following refined result from the discussions in \cite{CGS12} (see Remark 4.3 therein).
\begin{lemma}\label{lem:refined}
	Assume $N=2$ and $\alpha\geq 0$. If $\alpha\notin\mathbb{N}$, then either 
		there exist a constant $A\neq 0$, $k\in\mathbb{N}$ fulfilling $1\leq k<2(\alpha+1)$, and $\theta_0 \in \mathbb{S}^1$, such that
		\begin{equation}\label{eq:asym-u-infty-2}
			u(x)=-4(\alpha+1) \log |x| +\hat{u}(0)+A|x|^{-k} \sin \Big(k \frac{x}{|x|}+\theta_0\Big) + o(|x|^{-k}) \quad
			\text { as } |x| \to +\infty,
		\end{equation} 
or
		
\begin{equation*}
			u(x) =-4(\alpha+1) \log |x|+\hat{u}(0) + O(|x|^{-2(\alpha+1)}) \quad \text { as } |x| \to +\infty.
		\end{equation*}
 If $\alpha \in\mathbb{N}$, then either \eqref{eq:asym-u-infty-2} holds with $1\leq k\leq 2\alpha$ or
		\begin{equation*}
			u(x) =-4(\alpha+1) \log |x|+\hat{u}(0) + O(|x|^{-2\alpha-1})\quad \text { as } |x| \to +\infty.
		\end{equation*}
\end{lemma}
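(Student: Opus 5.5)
The plan is to reduce the statement to an expansion at the origin for the Kelvin transform $\hat u$, and then transfer it back through the inversion $x\mapsto x/|x|^2$. By the discussion preceding \eqref{eq:asym-lim}, $\hat u$ solves $-\Delta \hat u=|x|^{2\alpha}e^{\hat u}$ in $\mathbb{R}^2$ and is continuous at $0$. Moreover
\[
u(x)=-4(\alpha+1)\log|x|+\hat u\big(x/|x|^2\big).
\]
Hence it suffices to show that, as $y\to 0$, either
\[
\hat u(y)=\hat u(0)+A|y|^{k}\sin(k\theta+\theta_0)+o(|y|^k)
\]
with $A\neq0$ and $k$ in the stated range, or $\hat u(y)-\hat u(0)=O(|y|^{2(\alpha+1)})$ (respectively $O(|y|^{2\alpha+1})$ when $\alpha\in\mathbb{N}$). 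Under inversion, $|y|=|x|^{-1}$ and the angle $\theta$ goes to $-\theta$. The sign change is absorbed into $A$ and $\theta_0$, which yields \eqref{eq:asym-u-infty-2}.

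For the local expansion, write $f(y)=|y|^{2\alpha}e^{\hat u(y)}$, which is bounded near $0$. Since $\alpha\ge0$, we have $f(y)=|y|^{2\alpha}\big(e^{\hat u(0)}+O(|\hat u(y)-\hat u(0)|)\big)$. Split $\hat u=w+h$ on a small disc $B_\rho$, where $w$ is the Newtonian potential
\[
w(y)=\frac{1}{2\pi}\int_{B_\rho}\log\frac{1}{|y-z|}\,f(z)\,dz
\]
and $h$ is harmonic in $B_\rho$. For the harmonic part, expand in a Fourier series:
\[
h(y)=h(0)+\sum_{j\ge1}|y|^j\big(a_j\cos j\theta+b_j\sin j\theta\big).
\]
For the potential part, the standard estimate for the Poisson equation with right-hand side $O(|y|^{2\alpha})$ gives
\[
w(y)-w(0)-\sum_{1\le j<2\alpha+2}\big(\text{harmonic polynomial of degree } j\big)=O(|y|^{2\alpha+2})
\]
when $2\alpha+2\notin\mathbb{N}$, with a possible $|y|^{2\alpha+2}\log|y|$ loss at integer exponents. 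One sees this by subtracting the radial particular solution $e^{\hat u(0)}|y|^{2\alpha+2}/(2\alpha+2)^2$ and bootstrapping: first $\hat u-\hat u(0)=O(|y|)$ from $C^1$ regularity, then reinserting this to improve the remainder in $f$ to $O(|y|^{2\alpha+1})$.

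Combining the two parts: let $k$ be the lowest index $j$ with $(a_j,b_j)\neq(0,0)$ in the combined harmonic expansion, provided $k<2(\alpha+1)$. Then $|y|^k(a_k\cos k\theta+b_k\sin k\theta)=A|y|^k\sin(k\theta+\theta_0)$ with $A\ne0$, and everything else is $o(|y|^k)$. This is the first alternative. If no such $j$ exists, the remaining terms are the radial term $-e^{\hat u(0)}|y|^{2\alpha+2}/(2\alpha+2)^2$ plus higher-order corrections, giving the $O(|y|^{2(\alpha+1)})$ alternative.

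The main obstacle is the integer case $\alpha\in\mathbb{N}$. Here $2\alpha+2$ is an integer, so the homogeneous harmonic of degree $2\alpha+1$ sits just below the resonance. The perturbation $|y|^{2\alpha}\cdot O(|y|)$ produces a right-hand side of order $|y|^{2\alpha+1}$, whose Poisson solution is of order $|y|^{2\alpha+3}$ up to logs. The delicate point is to confirm that the only reliable remainder is $O(|y|^{2\alpha+1})$ once the harmonic terms of degree $\le 2\alpha$ are ruled out; this explains the range $1\le k\le 2\alpha$ and the weaker error. For this step I would rely on the detailed expansion in \cite{CGS12} (Remark 4.3) rather than redo the log-resonant bookkeeping.
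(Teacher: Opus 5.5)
Your route differs from the paper's. The paper gives no argument for Lemma~\ref{lem:refined}; it simply imports it from \cite{CGS12} (Remark 4.3). You prove it directly: a Kelvin transform reduces everything to $\hat u$ near $0$, you split $\hat u=w+h$ on $B_\rho$, and you expand the logarithmic potential in multipoles/Fourier modes. This works and gives a self-contained proof in a few lines, whereas the citation only buys brevity. In fact, after subtracting the radial particular solution $-\frac{e^{\hat u(0)}}{(2\alpha+2)^2}|y|^{2\alpha+2}$ (note the sign), your argument yields a stronger statement than the lemma when $\alpha\in\mathbb{N}$: the dichotomy with $1\le k<2(\alpha+1)$ and remainder $O(|y|^{2(\alpha+1)})$ holds for every $\alpha\ge0$.

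Three points to fix.

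First, $x\mapsto x/|x|^2$ is $z\mapsto 1/\bar z$, not $1/z$. It preserves the polar angle, so the expansion of $\hat u$ at $0$ transfers verbatim and nothing needs to be absorbed into $A,\theta_0$.

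Second, you have the integer case backwards: it is the easiest case, and the appeal to \cite{CGS12} there is unnecessary. Suppose the combined harmonic terms of degrees $1,\dots,2\alpha$ vanish. Then $\hat u-\hat u(0)$ is a harmonic term of degree $2\alpha+1$ plus $O(|y|^{2\alpha+2}\log\frac{1}{|y|})$. That is $O(|y|^{2\alpha+1})$, exactly the weak remainder the statement allows. Even more simply, for $\alpha\in\mathbb{N}$ the weight $|y|^{2\alpha}$ is a polynomial, so $\hat u$ is smooth near $0$. Then $\Delta\hat u=O(|y|^{2\alpha})$ forces every Taylor term of degree $\le 2\alpha+1$ to be harmonic.

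Third, resonance genuinely matters only for $\alpha\in\frac12+\mathbb{N}$. There $\alpha\notin\mathbb{N}$, so the sharp $O(|y|^{2(\alpha+1)})$ is required, yet $2\alpha+2\in\mathbb{N}$, which your Poisson estimate as written excludes. Your bootstrap does cover this case, but you should say why:
\begin{itemize}
\item The potential on $B_\rho$ of the radial part $e^{\hat u(0)}|z|^{2\alpha}$ equals the radial particular solution plus a constant.
\item For the remainder $g=|z|^{2\alpha}(e^{\hat u}-e^{\hat u(0)})=O(|z|^{2\alpha+1})$, the coefficient $\int_{B_\rho} z^{-(2\alpha+2)}g\,dz$ of the resonant mode converges, because $|z|^{-1}$ is integrable in the plane.
\end{itemize}
Hence no logarithm appears at order $2\alpha+2$.
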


Thanks to Lemma \ref{lem:refined} applied to $\hat u$, we are able to deduce the following result concerning the asymptotics of $\nabla u$ near the origin, which will be used in the next section for deriving the constancy of the $P$-function given by \eqref{eq:def-P}.

\begin{proposition} \label{pro:refine-gradient}
Assume $N=2$ and $\alpha\geq 0$. Then
	\begin{equation}\label{eq:growth-gradient}
		\nabla u(x)= O(|x|^\alpha) \quad\text{ as }|x|\to 0\,.
	\end{equation}
\end{proposition}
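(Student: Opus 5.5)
The plan is to split $u$ near the origin into a harmonic part and a Newtonian potential part. The harmonic part carries all the low-order behaviour. Its low-order Taylor coefficients are then killed using the holomorphic quantity attached to the Liouville equation \eqref{eq:SLio2}. This is a PDE replacement for the Liouville formula, and it uses only that the equation is semilinear when $N=2$.

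\textbf{Step 1 (splitting).} Fix a small ball $B_\rho$. By Lemma \ref{lem:pre}, $u\in L^\infty(B_\rho)$. Write $u=h+w$ on $B_\rho$, where
\[
w(x)=\frac1{2\pi}\int_{B_\rho}\log\frac{1}{|x-y|}\,|y|^{2\alpha}e^{u(y)}\,dy
\]
and $h$ is harmonic in $B_\rho$. Put $f=|y|^{2\alpha}e^{u}$, so that $f=O(|y|^{2\alpha})$ and $f$ is smooth away from $0$. I want the bounds
\[
\nabla w(x)-\nabla w(0)=O(|x|^{2\alpha+1}),\qquad D^2w(x)=O(|x|^{2\alpha})\quad(x\neq0).
\]
To get them, rescale $w_r(y)=w(ry)$. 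The function $f(ry)$ is bounded by $r^{2\alpha}$ on annuli, with derivative bounds coming from Lemma \ref{lem:pre} and interior Schauder estimates. Split the integral into the parts $|y|<|x|/2$, $|y-x|<|x|/2$ and the rest. This is routine though slightly tedious. (If $\alpha$ is not an integer, a harmless modification of $w$ by subtracting polynomial terms may be needed; I will absorb this into $h$.) In complex notation, $g:=2h_z$ is holomorphic in $B_\rho$, and $2u_z=g+O(|z|^{2\alpha+1})$, $2u_{zz}=g'+O(|z|^{2\alpha})$. The claim \eqref{eq:growth-gradient} is then equivalent to $g(z)=O(|z|^{\alpha})$. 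Equivalently, if $g(z)=a z^{k-1}+O(|z|^{k})$ with $a\neq0$, then $k-1\ge\alpha$.

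\textbf{Step 2 (holomorphic quantity).} Let $v=u+2\alpha\log|z|$, which solves \eqref{eq:SLio2}. Then
\[
F:=v_{zz}-\tfrac12 v_z^2
\]
is holomorphic in $\mathbb{C}\setminus\{0\}$. This is the classical computation, using $\partial_{\bar z}(v_{zz})=-\tfrac14(e^v)_z$ and $\partial_{\bar z}(v_z^2)=-\tfrac12 v_z e^v$. The function $v$ is smooth away from $0$ by elliptic regularity. Since $v_z=\alpha/(2z)+u_z$, Step 1 gives near $0$
\[
F=-\frac{c_\alpha}{z^2}+\frac12\Big(g'-\frac{\alpha}{z}g\Big)-\frac18 g^2+O(|z|^{2\alpha-1})+O(|z|^{2\alpha}|g|),
\]
where $c_\alpha=\tfrac\alpha2+\tfrac{\alpha^2}{8}$. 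So $F$ has at most a pole of order $2$ at $0$.

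At infinity I apply the same reasoning to $\hat u$. The Kelvin transform exchanges $0$ and $\infty$, and $F$ transforms as a quadratic differential, $F(z)\,dz^2$. Together with the fact that $\hat u$ is a genuine solution of \eqref{eq_general}$_{N=2}$ at the origin, this shows $F(z)=O(|z|^{-2})$ as $|z|\to\infty$. Alternatively one can use \eqref{1853} directly, which already gives $|v_{zz}|+|v_z|^2=O(|z|^{-2})$. By Laurent expansion, $F\equiv -c/z^2$ for a constant $c$.

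\textbf{Step 3 (killing low modes).} Insert $g=a z^{k-1}+\dots$ with $a\neq 0$ and $k\ge1$ into the expansion of Step 2. The coefficient of $z^{k-2}$ is $\tfrac12(k-1-\alpha)a$. The term $g^2$ contributes only powers $\ge 2k-2>k-2$. Since $F=-c/z^2$ has no $z^{k-2}$ term when $k\ge1$, this coefficient must be absorbed by the error $O(|z|^{2\alpha-1})+O(|z|^{2\alpha+k-1})$. Hence either $k-1=\alpha$, or $k-2\ge 2\alpha-1$, that is $k-1\ge 2\alpha\ge\alpha$. In both cases $k-1\ge\alpha$, so $g=O(|z|^\alpha)$, and \eqref{eq:growth-gradient} follows from Step 1. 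If $h$ is constant there is nothing to prove.

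\textbf{Main obstacle.} I expect the hard part to be the error bounds in Step 1, especially the bound on $D^2 w$ near $0$ when $2\alpha$ is small or non-integer. There the cancellation has to be done carefully, for instance by subtracting the Taylor polynomial of the kernel or by a dyadic scaling argument. A further point is rigorously justifying $F=O(|z|^{-2})$ at infinity.

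Lemma \ref{lem:refined}, applied to $\hat u$, gives an alternative route: it provides the expansion of $u-u(0)$ at $0$ directly, and can serve as a cross-check on the admissible values of $k$. I would try the holomorphic argument first, since it delivers the needed derivative information and not just values of $u$.
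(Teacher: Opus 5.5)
Your proposal is correct, and its core is the same as the paper's. Both rely on the holomorphic quantity $F=v_{zz}-\frac12 v_z^2$ on $\mathbb{C}\setminus\{0\}$, which the $O(|z|^{-2})$ bounds at $0$ and at infinity force to equal $-Cz^{-2}$. Your second option at infinity, via \eqref{1853}, is exactly how the paper reaches \eqref{eq:identity-uzz}.

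The difference is how you obtain the low-order structure of $u$ at the origin. The paper imports it from Lemma~\ref{lem:refined}, the refined asymptotics of \cite{CGS12} applied to $\hat u$. That lemma says the first nonconstant term of $u-u(0)$ is a homogeneous harmonic $A|x|^k\sin(k\theta+\theta_0)$ with $k<2(\alpha+1)$, or else $u-u(0)=O(|x|^{2\alpha+1})$. The paper then uses $u\in C^k$ to match Taylor coefficients in \eqref{eq:identity-uzz}, gets $u-u(0)=O(|x|^{\alpha+1})$, and passes to $\nabla u$ by rescaling.

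You build that structure yourself through $u=h+w$. The harmonic part is then stored in the holomorphic $g=2h_z$, Taylor matching becomes a comparison of Laurent coefficients, and the gradient bound comes out directly. Your route is self-contained and essentially reproves the part of \cite{CGS12} that is needed, at the cost of a weighted potential estimate. The paper's route is shorter, given the citation.

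Three small points.
\begin{itemize}
\item Since $\partial_z\log|z|=\frac{1}{2z}$, one has $v_z=u_z+\alpha/z$, not $u_z+\alpha/(2z)$, so $c_\alpha=\alpha+\frac{\alpha^2}{2}$. Your displayed term $\frac12\bigl(g'-\frac{\alpha}{z}g\bigr)$ is already the correct one, and it is what produces the coefficient $\frac12(k-1-\alpha)a$. Had you used $\alpha/(2z)$ consistently, you would have found the wrong resonance $k-1=\alpha/2$.
\item In Step 1, subtracting the Taylor polynomial of the kernel (harmonic polynomials of degree $<2\alpha+2$, moved into $h$) is needed for every $\alpha>0$, not only for non-integer $\alpha$. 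For the plain Newtonian potential, $D^2w(0)$ need not vanish, so $\nabla w-\nabla w(0)$ can contain a linear term.
\item The obstacle you flag is milder than you fear. Step 3 only needs $\nabla w=O(|x|^{\alpha})$ and $D^2w=O(|x|^{\alpha-1})$. Then the error is $O(|z|^{\alpha-1})$, and it is meromorphic on the punctured disc, being the difference of $F$ and your meromorphic main part. By Cauchy estimates on small circles, its Laurent coefficient at $z^{k-2}$ vanishes whenever $k-1<\alpha$. That forces $(k-1-\alpha)a=0$, which is impossible. So logarithmic losses, such as those the generic estimate incurs when $2\alpha\in\mathbb{N}$, do no harm. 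Alternatively, you can split $f=e^{u(0)}|y|^{2\alpha}+f_2$ with $f_2=O(|y|^{2\alpha+\beta})$ and $2\alpha+\beta\notin\mathbb{N}$.
\end{itemize}
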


\begin{proof}
	First, note that $u\in  C^{2,\beta}(\mathbb{R}^2)\cap C^\infty(\mathbb{R}^2\setminus\{0\})$ when $\alpha\geq 0$, by standard elliptic theory. So \eqref{eq:growth-gradient} clearly holds when $\alpha=0$. In the following, we argue in the case $\alpha>0$.

By virtue of Lemma \ref{lem:refined} applied to $\hat u$, we obtain the following asymptotic behavior of $u$ near the origin. If $\alpha\notin\mathbb{N}$, then either there exist a constant $A\neq 0$, $k\in\mathbb{N}$ fulfilling $1\leq k<2(\alpha+1)$, and $\theta_0 \in \mathbb{S}^1$, such that
	\begin{equation}\label{eq:nNA}
		u(x)= u(0) + A |x|^k \sin\Big(k \frac{x}{|x|}+\theta_0 \Big) + o(|x|^k) \quad
		\text { as } |x| \to 0 \,,
	\end{equation}
or
	\begin{equation}\label{eq:sharp-expansion-u}
		u(x) = u(0) + O(|x|^{2(\alpha+1)}) \quad \text { as } |x| \to 0.
	\end{equation}
If $\alpha \in\mathbb{N}$, then either \eqref{eq:nNA} holds with $1\leq k\leq 2\alpha$ or
	\begin{equation}\label{eq:N-sharp-u}
		u(x)=u(0) + O(|x|^{2\alpha+1})\quad \text { as } |x| \to 0 .
	\end{equation}

We are now going to use basic facts from complex analysis to show that the case \eqref{eq:nNA} above can be excluded when $\alpha\notin\mathbb{N}$, and that $k$ appearing in \eqref{eq:nNA} must be $\alpha+1$ when $\alpha\in\mathbb{N}$. Let $$\tilde{u}(x)=u(x)+2\alpha\log|x|\,,$$
and note that
\begin{equation}\label{eq:u-tilde}
	-\Delta \tilde{u}=e^{\tilde{u}}\quad \text{ in }\mathbb{R}^2\setminus\{0\}.
\end{equation}
In order to rewrite \eqref{eq:u-tilde} in complex notation, we 
let $\bar{z}$ be the conjugate of $z=x+iy\in\mathbb{C}$, and let $\partial_z$, $\partial_{\bar{z}}$ be the first order Wirtinger derivatives given by
\begin{equation*}
	\partial_z =\frac12\left(\partial_{x}-i\partial_{y}\right), \quad 	\partial_{\bar{z}} =\frac12\left(\partial_{x}+i\partial_{y}\right).
\end{equation*} 
 For simplicity, we set $\tilde{u}_z=\partial_z \tilde{u}$, $\tilde{u}_{\bar{z}}=\partial_{\bar{z}} \tilde{u}$ and $\tilde{u}_{z\bar{z}}=\partial_{\bar{z}}\tilde{u}_z$. Then  \eqref{eq:u-tilde} can be rewritten as $$-4\tilde{u}_{z\bar{z}}=e^{\tilde{u}} \quad \text{for }z\neq0 \,,$$
which implies
\begin{equation*}
-4\partial_{\bar{z}}(\tilde{u}_{zz})=-4\partial_{z}(\tilde{u}_{z\bar{z}})=e^{\tilde{u}}\tilde{u}_{z}=-4\tilde{u}_{z\bar{z}}\tilde{u}_{z}=-2\partial_{\bar{z}}(\tilde{u}_{z}^2)\quad \text{for }z\neq0\,,
\end{equation*}
or equivalently
$$\partial_{\bar{z}}\left(\tilde{u}_{zz}-\frac{\tilde{u}_{z}^2}{2}\right)=0 \quad \text{for }z\neq0\,.$$
Thus, $\tilde{u}_{zz}-\frac{\tilde{u}_{z}^2}{2}$ is holomorphic in $\mathbb{C}\setminus\{0\}$ and has a Laurent expansion. Note that 
\begin{equation}\label{eq:tilde-uzz}
	\tilde{u}_{zz}-\frac{\tilde{u}_{z}^2}{2}=u_{zz}-\frac{u_{z}^2}{2}-\alpha u_z z^{-1}-\left(\alpha+\frac{\alpha^2}{2}\right)z^{-2}\,.
\end{equation}
Since $u \in C^2(\mathbb{R}^2)$ and satisfies \eqref{1853} at infinity, we infer from \eqref{eq:tilde-uzz} that
 $$\tilde{u}_{zz}-\frac{\tilde{u}_{z}^2}{2}=O(|z|^{-2}) \quad\text{both as }z \to 0 \text{ and as } |z|\to\infty.$$
This forces
\begin{equation}\label{eq:identity-uzz}
	u_{zz}-\frac{u_{z}^2}{2}-\alpha u_z z^{-1}=0 \quad \text{for }z\neq0\,.
\end{equation}


Assume now that \eqref{eq:nNA} holds. We rewrite it as
 \begin{equation}\label{eq:expansion-u-complex}
 	 u(z)= u(0) + az^k+\bar{a}\bar{z}^k + o(|z|^k) \quad
 	\text {as } z \to 0\,,
 \end{equation}
 where $a\neq 0$ is a complex number. Observe from \eqref{eq:identity-uzz} that $u_z(0)=0$ and then $k\geq 2$ in \eqref{eq:expansion-u-complex}. Since $u \in C^k(\mathbb{R}^2)$ by elliptic estimates, we have that the Taylor polynomial of $u$ at $0$ of order $k$ coincides with \eqref{eq:expansion-u-complex} and then the derivatives of $u$ at $0$ are all determined by \eqref{eq:expansion-u-complex} up to order $k$. Therefore, we derive that
$$u_{zz}-\frac{u_{z}^2}{2}-\alpha u_z z^{-1}=ak(k-1-\alpha)z^{k-2}+o(|z|^{k-2})\quad
 	\text {as } z \to 0 \,.$$
Thus, by \eqref{eq:identity-uzz} it yields that $a=0$ if $\alpha\notin\mathbb{N}$ (which is a contradiction), or that $k=\alpha+1$ if $\alpha\in\mathbb{N}$.

Consequently, we conclude that if $\alpha\notin\mathbb{N}$, $u$ must satisfy \eqref{eq:sharp-expansion-u}, and that if $\alpha\in\mathbb{N}$, either \eqref{eq:nNA} with $k=\alpha+1$ or \eqref{eq:N-sharp-u} does hold. In any case, it holds
$$u(x)-u(0)=O(|x|^{\alpha+1})\quad\text{as }x \to 0\,,$$ which easily implies
$$\nabla u(x)=O(|x|^\alpha) \quad\text{as }x \to 0\,.$$
The proof is completed.
\end{proof}

\section{The $P$-function}\label{sec:P}

This section is devoted to the study of properties of the so-called $P$-function introduced in \eqref{eq:def-P}, which contains three parts. First, in Subsection \ref{sec:P-1} we fix some geometric notation, since we will be working on the Riemannian manifold $(\mathbb{R}^N\setminus\{0\},g)$ where the metric $g$ is defined in \eqref{eq:def-g}. Then we derive in Subsection \ref{sec:P-2} a differential identity satisfied by the $P$-function. At last, in Subsection \ref{sec:P-3} we demonstrate the constancy of the $P$-function under the assumptions of Theorem \ref{thm:2D} or Theorem \ref{thm:HD}.

\subsection{Notation}\label{sec:P-1}
 We use the Einstein summation convention throughout the section. Given an $N$-dimensional Riemannian manifold $(M,g)$, we denote by $(x^1,\cdots,x^N)$ a local coordinate system on $M$ and by $\partial_{x^i}$ $(1\leq i\leq N)$ the corresponding coordinate vector fields tangent to $M$. Let $g_{ij}=g(\partial_ {x^i},\partial_ {x^j})$ and let $g^{ij}$ be such that $g^{ik}g_{kj}=\delta^i_j$, where $\delta^i_j$ denotes the Kronecker symbol. Also, denote by $d\mathscr{V}_g$ the volume form of $(M,g)$.

For any tangent vector $\xi$ to $(M,g)$, we write $\xi=\xi^i\partial_{x^i}$ and define $$|\xi|_g:=\sqrt{g_{ij}\xi^i\xi^j}\,,\quad V_g(\xi):=\frac1N|\xi|^N_g \,,\quad a_g(\xi):=|\xi|_g^{N-2}\xi \,.$$ 
Let $\xi_i=g_{ij}\xi^j$. Then we have
$$\partial_{\xi_i} V_g (\xi)=|\xi|_g^{N-2}\xi^i=a_g^i(\xi)$$
and
\begin{equation}\label{eq:def-Ag}
\partial^2_{\xi_i\xi_j} V_g (\xi) = (N-2) |\xi|_g^{N-4} \xi^i \xi^j + |\xi|_g^{N-2} g^{ij}=:A_g^{ij} (\xi)  \quad\text{ if }\xi\neq0 \,,
\end{equation}
where $A_g^{ij} (\xi):=0$ if $\xi=0$.
It is seen that
\begin{equation}\label{eq:xiA}
	\xi_i A_g^{ij}(\xi) = (N-1) |\xi|^{N-2}_g \xi^j = (N-1) a^j_g(\xi)
\end{equation}
and 
\begin{equation}\label{eq:D3V}
	\xi_j \, \partial^3_{\xi_i \xi_j \xi_k} V_g(\xi) = (N-2)|\xi|_g^{N-4}\left((N-2)\xi^i \xi^k + |\xi|_g^2 \, g^{ik}\right)=(N-2)A_g^{ik}(\xi)
\end{equation}
for $\xi\neq0$.

For a smooth function $v$ on $(M,g)$, set $v_j:=\partial_{x^j}v$ and let $\nabla_g v = v^i\partial_{x^j}$ be the gradient\footnote{When $M$ is a subset of $\mathbb{R}^N$ endowed with the Euclidean metric $\delta_e$, we write $\nabla v$ as usual.} of $v$ with respect to the metric $g$, where $v^i=g^{ij} v_j$. For simplicity, we write $V$ as the function 
\begin{equation}\label{eq:def-V}
	V:=V_g(\nabla_g v)=\frac{1}{N}|\nabla_g v|_g^N
\end{equation}
and $a$ as the tangent vector
\begin{equation}\label{eq:def-a}
	a:=a_g(\nabla_g v)=|\nabla_g v|_g^{N-2}\nabla_g v\,.
\end{equation}
We work with the Levi--Civita connection on $(M,g)$ and let $\nabla_j$ stand for the covariant derivative with respect to the tangent vector $\partial_{x^j}$. Then 
\begin{equation}\label{eq:def-Vj}
	V_j = \partial_{x^j}V=\nabla_j V=\partial_{\xi_k}V_g(\nabla_g v) \, \nabla_j v_k = a^k v_{k,j}
\end{equation}
where $v_{k,j}=\nabla_j\nabla_k \, v$ (i.e. the components of the $g$-Hessian of $v$) and 
\begin{equation}\label{eq:def-ak}
a^k:=a^k_g(\nabla_g v)=|\nabla_g v|_g^{N-2}v^k \,.
\end{equation}
Set
\begin{equation}\label{eq:def-A}
	A^{ij}:= A^{ij}_g(\nabla_g v)\,
\end{equation}
where $A^{ij}_g$ is given by \eqref{eq:def-Ag}. We have
 \begin{equation}\label{eq:def-aik}
 	a^i_{,k} := \nabla_k (a^i)=\nabla_k (\partial_{\xi_i} V_g(\nabla_g v))=\partial^2_{\xi_i \xi_j}V_g(\nabla_g v)\, \nabla_k v_j = A^{ij} v_{j,k} \,.
 \end{equation}

 Let $R_{\ell jik}$ be the components of the Riemann curvature tensor on $(M,g)$. Denote $v_{jk,i}=\nabla_i\nabla_k\nabla_j v$. By the Ricci identity, one has
 \begin{equation}\label{eq:vjki-vjik}
 	v_{jk,i}-v_{ji,k}=v^\ell R_{i \ell jk}\,.
 \end{equation}
In addition, we let $\mathrm{Ric}_g$ be the Ricci curvature tensor on $(M,g)$ and $(\mathrm{Ric}_g)_{ij}$ be its components.

\subsection{A differential identity for the $P$-function} \label{sec:P-2}
Throughout the rest of this section, we adopt the notations in Subsection \ref{sec:P-1} and fix $M=\mathbb{R}^N\setminus\{0\}$ with $g$ the conformal metric defined in \eqref{eq:def-g}, i.e. $$g=|x|^{2\alpha}\delta_e \,,$$ where $\delta_e$ denotes the standard Euclidean metric on $\mathbb{R}^N$. In this case, notice that 
\begin{equation}\label{eq:Ric}
	(\mathrm{Ric}_g)_{ij}(x)=\frac{(N-2)\left(1-(\alpha+1)^2\right)}{|x|^2}\left(\delta^i_j-\frac{x^i x^j}{|x|^2}\right)\quad \text{ for }x\in M
\end{equation}
(see for instance \cite[Formula (2.68)]{CatinoMastrolia}).

In the sequel, we always let $u$ be a solution to problem \eqref{eq_general} and $v$ be the function given by 
\begin{equation}\label{eq:def-v}
	v=e^{-\frac{u}{N}}.
\end{equation}
It can be verified that, in the conformal setting $(M,g)$, $u$ satisfies 
\begin{equation}\label{eq:N-Lap-g-u}
	-\Delta_N^g u :=-\operatorname{div}_g\left(|\nabla_g u|_g^{N-2}\nabla_g u\right)=-|x|^{-N\alpha}\Delta_N u=e^u \quad\text{ in }M,
\end{equation}
where $\operatorname{div}_g$ stands for the divergence operator on $(M,g)$. Likewise, note that
\begin{equation}\label{eq:def-P-N}
	\Delta_N^g v=\operatorname{div}_g (a)=a^i_{,i}=\frac{1}{v} \left[(N-1) |\nabla_g v|_g^N + \frac{1}{N^{N-1}}\right] \quad\text{ in } M,
\end{equation}
where $a$ and $a^i_{,i}$ are introduced in \eqref{eq:def-a} and \eqref{eq:def-aik}, respectively. Here, we point out that both \eqref{eq:N-Lap-g-u} and \eqref{eq:def-P-N} are understood in the weak sense.

From \eqref{eq:def-P-N}, we have that the $P$-function, given by \eqref{eq:def-P}, can be expressed as
\begin{equation}\label{eq:def-P-v2}
	P= \frac{1}{v} \left[(N-1) |\nabla_g v|_g^N + \frac{1}{N^{N-1}}\right] \quad\text{ in } M.
\end{equation}
By Lemma \ref{lem:pre}, it is clear that $v\in C^{1,\beta}_{\mathrm{loc}}(M)$ and $|\nabla v|^{N-2}\nabla v\in W_{\mathrm{loc}}^{1,2}(M)$, thus implying
$P\in  C^{0,\beta}_{\mathrm{loc}}(M)\cap W_{\mathrm{loc}}^{1,2}(M)$ for all $\beta \in (0,1)$. Also, note that $P\in  C^\infty(M\setminus Z)$, since $u\in C^\infty(M\setminus Z)$ (so does $v$), where $Z$ is the critical set of $u$, namely, $$Z:=\{x\in\mathbb{R}^N: \nabla u(x)=0\}.$$ 
Notice that $Z$ coincides with the critical set of $v$ and has zero Lebesgue measure (see for instance \cite[Corollary 1.7]{ACF23}).

We can derive that $P$ satisfies a differential identity pointwise in $M\setminus Z$.  To be specific,  let $W$ be the matrix whose $(i,j)$ entry is given by $a^i_{,j}$ (see \eqref{eq:def-aik}) and 
\begin{equation}\label{eq:def-E}
	E=W-\frac{P}{N}\mathrm{Id}\,,
\end{equation}
where $\mathrm{Id}$ denotes the identity matrix. By \eqref{eq:def-P-N}--\eqref{eq:def-P-v2},  note that
\begin{equation} \label{1205}
\mathrm{Tr} (E)=\mathrm{Tr} (W)-P=0, \quad \mathrm{Tr} (E^2)=\mathrm{Tr} (W^2)-\frac{[\mathrm{Tr} (W)]^2}{N}.
\end{equation}
We have the following result.
\begin{lemma}\label{lem:identity-P}
	Assume $N\geq 2$ and $\alpha>-1$. Then there holds
	\begin{equation}\label{eq:di-Pj}
		\nabla_i (v^{2-N} A^{ij} P_j) = N (N-1) v^{1-N} \left[\mathrm{Tr} (E^2)+(\mathrm{Ric}_g)_{ij}a^ia^j\right] \quad\text{ in } M\setminus Z \,,
	\end{equation}
	where $\rm{Tr}(E^2)$ denotes the trace of $E^2$, $A^{ij}$ and $a^i$ are introduced in \eqref{eq:def-A} and \eqref{eq:def-ak}, respectively.
\end{lemma}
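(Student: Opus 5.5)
We will prove \eqref{eq:di-Pj} by a direct pointwise computation in $M\setminus Z$. There $v$ is smooth and $\nabla_g v\neq 0$, so $A^{ij}$ is smooth and positive definite. The computation follows the scheme used for $P$-functions of critical $p$-Laplace equations (as in \cite{CFP24,CG25}). The key structural input is the equation \eqref{eq:def-P-N}, written as
\[
v\,a^i_{,i} = (N-1)\,N V + N^{1-N}, \qquad\text{so}\qquad P = v^{-1}\bigl((N-1)NV + N^{1-N}\bigr).
\]

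\textbf{Step 1: the gradient of $P$.} By \eqref{eq:def-Vj} we have $P_j = -v^{-2}v_j\bigl((N-1)NV+N^{1-N}\bigr) + v^{-1}N(N-1)a^k v_{k,j}$, that is,
\[
vP_j = -P v_j + N(N-1)a^k v_{k,j}.
\]
Contracting with $A^{ij}$ and using \eqref{eq:xiA} (so that $A^{ij}v_j=(N-1)a^i$) together with \eqref{eq:def-aik} (so that $A^{ij}v_{k,j}$ relates to $a^i_{,k}$ by symmetry of the Hessian) gives
\[
v A^{ij}P_j = -(N-1)P a^i + N(N-1) a^k a^i_{,k},
\]
where $a^i_{,k}=W^i_k$. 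Hence
\[
v^{2-N}A^{ij}P_j = (N-1)\,v^{1-N}\bigl(N a^k W^i_k - P a^i\bigr) = N(N-1)\,v^{1-N} E^i_k a^k .
\]
This is a clean vector field, $N(N-1)v^{1-N}(Ea)^i$.

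\textbf{Step 2: divergence.} We compute
\[
\nabla_i\bigl(v^{1-N}E^i_ka^k\bigr) = (1-N)v^{-N}v_i E^i_k a^k + v^{1-N}\bigl(E^i_{k,i}a^k + E^i_k a^k_{,i}\bigr).
\]
The last term is $\mathrm{Tr}(EW)=\mathrm{Tr}(E^2)$, since $\mathrm{Tr}E=0$ by \eqref{1205}. For the middle term, $E^i_{k,i} = a^i_{,ki} - \tfrac1N P_k$. We commute derivatives: $a^i_{,ki}=a^i_{,ik}+(\mathrm{Ric}_g)_{k\ell}a^\ell$ (Ricci identity for a vector field, cf.\ \eqref{eq:vjki-vjik}), and $a^i_{,ik}=P_k$ since $\operatorname{div}_g a=P$. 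Thus
\[
E^i_{k,i}a^k = \tfrac{N-1}{N}P_ka^k + (\mathrm{Ric}_g)_{k\ell}a^ka^\ell .
\]
It remains to show that the leftover first-order terms cancel:
\[
-(N-1)v^{-1}v_iE^i_ka^k + \tfrac{N-1}{N}P_ka^k = 0 .
\]
From Step 1, $P_ka^k = v^{-1}\bigl(-Pv_ka^k + N(N-1)a^ka^jv_{j,k}\bigr)$. Also $v_iE^i_ka^k = v_i a^i_{,k}a^k - \tfrac PN v_ka^k$, where $v_ia^i_{,k}=v_iA^{ij}v_{j,k}=(N-1)a^jv_{j,k}$ by \eqref{eq:xiA}. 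Substituting both expressions, the two sums match exactly. This yields \eqref{eq:di-Pj}.

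\textbf{Expected obstacle.} The main obstacle is the careful bookkeeping in the commutation step. The Riemann tensor term must be shown to reduce precisely to $(\mathrm{Ric}_g)_{ij}a^ia^j$ with the correct sign and index placement, and the symmetry $A^{ij}v_{k,j}$ versus $v_{j,k}$ must be justified; both rely on the Levi--Civita connection being torsion-free.

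A further point needs care. $W$ is not symmetric as a matrix, so $\mathrm{Tr}(E^2)=\mathrm{Tr}(W^2)-P^2/N$ must be read with mixed indices. This is consistent with \eqref{1205}, so no symmetry of $W$ is needed. The singularity of $g$ at the origin and the critical set $Z$ play no role, since the identity is purely local in $M\setminus Z$.
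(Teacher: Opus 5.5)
Your argument is correct, and it takes a genuinely different and shorter route than the paper.

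The paper expands $\nabla_i(v^{2-N}A^{ij}P_j)$ by the product rule into $vA^{ij}P_{i,j}$, $vA^{ij}_{,i}P_j$ and $v_iA^{ij}P_j$. This forces it to do three things:
\begin{itemize}
\item compute $A^{ij}V_{i,j}$, which involves third covariant derivatives of $v$ commuted via the Ricci identity \eqref{eq:vjki-vjik};
\item handle $A^{ij}_{,k}$ through the third derivatives $\partial^3 V_g$ and the identity \eqref{eq:D3V};
\item reduce $A^{ij}a^kv^\ell R_{i\ell jk}$ to $(\mathrm{Ric}_g)_{ij}a^ia^j$ using the symmetries of the Riemann tensor.
\end{itemize}

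You instead first collapse the flux to $v^{2-N}A^{ij}P_j=N(N-1)v^{1-N}E^i_ka^k$. This uses only \eqref{eq:P_j}, \eqref{eq:xiA} and the symmetry of the $g$-Hessian, and it is essentially the vector form of the computation behind \eqref{eq:AijPiPj}. After that, the divergence needs only three facts:
\begin{itemize}
\item the contracted Ricci identity $\nabla_i\nabla_k a^i=\nabla_k\operatorname{div}_g a+(\mathrm{Ric}_g)_{k\ell}a^\ell$, whose sign agrees with the standard Bochner convention and with \eqref{eq:Ric};
\item the equation $\operatorname{div}_g a=P$;
\item $\mathrm{Tr}\,E=0$, so that $\mathrm{Tr}(EW)=\mathrm{Tr}(E^2)$.
\end{itemize}
The leftover first-order terms do cancel as you claim: $Nv_iE^i_ka^k$ and $vP_ka^k$ both equal $N(N-1)a^ja^kv_{j,k}-Pv_ka^k$.

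Your route shows the structure clearly. It is a Bochner-type identity for the vector field $a$: the flux is the trace-free part of $\nabla a$ applied to $a$, and the curvature term appears already contracted. It also avoids the $\partial^3V_g$ bookkeeping (\eqref{eq:vAiPj}, \eqref{1044}) entirely. The paper's expansion, in the style of \cite{CFP24}, is more mechanical but tracks every term explicitly. Both computations need only $C^3$ regularity of $v$, which holds in $M\setminus Z$.
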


\begin{proof}
	Note that
	\begin{equation}\label{eq:Di-vAP}
		\nabla_i (v^{2-N} A^{ij} P_j) = v^{1-N} \left[ v A^{ij}P_{i,j} + v A^{ij}_{,i} P_j - (N-2) v_i A^{ij}P_j  \right].
	\end{equation}
	We are going to calculate each term on the right hand side of \eqref{eq:Di-vAP}. First, in view of \eqref{eq:def-V} and \eqref{eq:def-P-v2}, we have
	\begin{equation} \label{eq:P_j}
		P_j =\nabla_j P =- \frac{P}{v} v_j + \frac{N(N-1)}{v} V_j \,,
	\end{equation}
so that
	\begin{align*}
	P_{j,i} =\nabla_i P_j & = -\frac{P_i v_j}{v} + \frac{P}{v^2}v_i v_j - \frac{P}{v}v_{j,i} - \frac{N(N-1)}{v^2}v_i V_j + \frac{N(N-1)}{v} V_{i,j} \\
		& = -\frac{P_i v_j}{v}   -\frac{P_j v_i}{v} - \frac{P}{v}v_{i,j}  + \frac{N(N-1)}{v} V_{j,i}  \,.
	\end{align*}
	Thus, by \eqref{eq:def-aik} and \eqref{eq:def-P-N}--\eqref{eq:def-P-v2}, we get
	\begin{align}
		v A^{ij}P_{i,j}  & = v A^{ij} \left( -\frac{P_i v_j}{v}   -\frac{P_j v_i}{v} - \frac{P}{v}v_{i,j}  + \frac{N(N-1)}{v} V_{i,j}  \right) \notag\\
		& = -  2 A^{ij} P_i v_j - P^2 + N(N-1) A^{ij} V_{i,j}  \,. \label{eq:vAPij0}
	\end{align}
Furthermore, by \eqref{eq:def-Ag}, \eqref{eq:def-Vj}--\eqref{eq:vjki-vjik}, \eqref{eq:def-P-N}--\eqref{eq:def-P-v2} and \eqref{eq:P_j}, we derive 
	\begin{align}
		A^{ij} V_{i,j}  & = A^{ij} \nabla_i (a^k v_{k,j}) \notag\\
		& = A^{ij} a^k_{,i}  \, v_{k,j}  + A^{ij} a^k v_{jk,i} \notag\\
		& = a^i_{,k} \, a^k_{,i} + A^{ij} a^k v_{jk,i} \notag\\
		&= a^i_{,k} \, a^k_{,i}  + A^{ij} a^k (v_{ji,k}+v^\ell R_{i \ell jk}) \notag\\
		& = a^i_{,j} \, a^j_{,i} +  a^k  \nabla_k (A^{ij}v_{j,i}) - A^{ij}_{,k} a^k v_{j,i}+A^{ij} a^k v^\ell R_{i \ell jk} \notag\\
		& = a^i_{,j} \, a^j_{,i} +  a^k  \nabla_k (A^{ij}v_{j,i} ) - \left(\partial^3_{\xi_i \xi_j \xi_\ell} V_g(\nabla_g v)\right) v_{\ell, k}\,  a^k v_{j,i} + A^{ij} a^k v^\ell R_{i\ell jk} \notag\\
		& =a^i_{,j}\, a^j_{,i} + a^k P_k - \left(\partial^3_{\xi_i \xi_j \xi_\ell} V_g(\nabla_g v)\right) V_\ell \, v_{j,i} +A^{ij} a^k v^\ell  R_{i\ell jk} \,. \label{eq:AijVij}
	\end{align}
and
 \begin{equation} \label{eq:vAiPj}
	v A^{ij}_{,i} P_j = ( -P v_j + N(N-1) V_j) \left(\partial^3_{\xi_i \xi_j \xi_l} V_g(\nabla_g v)\right)v_{l,i} \,. 
\end{equation}
Hence, we conclude from \eqref{eq:Di-vAP} and \eqref{eq:vAPij0}--\eqref{eq:vAiPj} that
	\begin{align}
		&\nabla_i (v^{2-N}A^{ij} P_j) \notag\\
		 = \,& v^{1-N} \Big\{ -  2 A^{ij} P_i v_j - P^2 \notag\\
		 & + N(N-1) \left[a^i_{,j}\, a^j_{,i} + a^k P_k - \left(\partial^3_{\xi_i \xi_j \xi_\ell} V_g(\nabla_g v)\right) V_\ell \, v_{j,i} +A^{ij} a^k v^\ell R_{i\ell jk}\right] \notag\\
		& +  (-P v_j + N(N-1) V_j) \left(\partial^3_{\xi_i \xi_j \xi_l} V_g(\nabla_g v)\right)v_{l,i}  - (N-2) v_i A^{ij}P_j  \Big\} \notag\\
		 = \, & v^{1-N} \Big[-P^2+ N(N-1) \left(a^i_{,j}\, a^j_{,i} +  a^k  P_k + A^{ij} a^k v^\ell R_{i\ell jk} \right) \notag\\
		 & -P v_j \left(\partial^3_{\xi_i \xi_j \xi_l} V_g(\nabla_g v)\right)v_{l,i}  -N v_i A^{ij}P_j  \Big]. \label{eq:Di-vAP-2}
	\end{align}
Note from \eqref{eq:xiA} and \eqref{eq:def-ak}--\eqref{eq:def-A} that
	$$
	v_i A^{ij}P_j = (N-1) a^j P_j \,.
	$$
	Then \eqref{eq:Di-vAP-2} becomes
	\begin{align}
		\nabla_i (v^{2-N}A^{ij} P_j) = \,& v^{1-N} \Big[-P^2+ N(N-1) \left(a^i_{,j}\, a^j_{,i} + A^{ij} a^k v^\ell R_{i\ell jk} \right) \notag\\
		&  -P v_j \left(\partial^3_{\xi_i \xi_j \xi_l} V_g(\nabla_g v)\right)v_{l,i} \Big]. \label{eq:vAPj}
	\end{align}
	By virtue of \eqref{eq:D3V}, \eqref{eq:def-A}--\eqref{eq:def-aik} and \eqref{eq:def-P-N}--\eqref{eq:def-P-v2}, we find
	\begin{equation}\label{1044}
	v_j \left(\partial^3_{\xi_i \xi_j \xi_l} V_g(\nabla_g v)\right) v_{l,i}= (N-2) A^{il} v_{l,i} = (N-2) P \,.
\end{equation}
	Consequently, by \eqref{eq:def-Ag}, \eqref{eq:def-ak}--\eqref{eq:def-A},  \eqref{1205} and \eqref{eq:vAPj}-\eqref{1044}, we arrive at
	\begin{align*}
		\nabla_i (v^{2-N}A^{ij} P_j) &=N (N-1) v^{1-N} \left(a^i_{,j}\, a^j_{,i} - \frac{P^2}{N} + A^{ij} a^k v^\ell R_{i\ell jk} \right) \\
		&= N (N-1) v^{1-N} \left[\mathrm{Tr} (E^2)+ (\mathrm{Ric}_g)_{lk} a^l a^k \right]
	\end{align*}
thanks to the symmetries of the Riemann tensor. This completes the proof.
\end{proof}

\begin{remark}\label{rk:Tr-E}
	Note that in $M\setminus Z$, $\mathrm{Tr}(E^2)\geq 0$, and $\mathrm{Tr}(E^2)=0$ if and only if $E=0$. Indeed, in view of \eqref{eq:def-Ag} and \eqref{eq:def-A}--\eqref{eq:def-aik}, the matrix $W=(a^i_{,j})$ can be written as
	 \begin{equation}\label{eq:rewrite-W}
	 	W=|x|^{-N\alpha}\mathcal{A}(v)H_g(v),
	 \end{equation}
	 where $H_g(v)$ denotes the $g$-Hessian of $v$ whose components are $v_{i,j}$ and 
	 \begin{equation}\label{eq:def-B}
	 	\mathcal{A}(v)=|\nabla v|^{N-2}\mathrm{Id}+(N-2)|\nabla v|^{N-4}\nabla v\otimes\nabla v \,.
	 \end{equation}
	  Thus, as argued in the proof of \cite[Lemma 6.3]{SZ02}, by using \eqref{1205}, it is easy to see that
	 $$\mathrm{Tr}(E^2)=\mathrm{Tr}(W^2)-\frac{[\mathrm{Tr}(W)]^2}{N}\geq 0$$
	and the equality holds if and only if $W=\frac{\rm{Tr}(W)}{N}\mathrm{Id}$, i.e. $E=0$.
\end{remark}

\begin{remark}\label{rk:P2D}
	As $u\in C^\infty(M)$ when $N=2$, it is seen that in this case \eqref{eq:di-Pj} holds globally in $M$ and, by \eqref{eq:def-Ag}, \eqref{eq:def-aik}, \eqref{eq:Ric}, \eqref{1205}, it can be rewritten as 
	\begin{equation} \label{eq:Delta-g-P}
		\Delta_g P = 2 v^{-1} \left| H_g(v) - \frac{P}{2} g \right|_g^2 \quad\text{ in } M,
	\end{equation}
	where $\Delta_g$ is the Laplace--Beltrami operator with respect to the metric $g$, and hence $P$ is subharmonic in $M$.
\end{remark}

	 From Lemma \ref{lem:identity-P}, we can derive the following inequality.
	 
\begin{lemma}\label{lem:DvAP}
	Let $N$ and $\alpha$ be as in Theorem \ref{thm:2D} or Theorem \ref{thm:HD}. Then for any $t\in\mathbb{R}$,
	\begin{equation}\label{eq:DvAPt}
		\nabla_i (v^{2-N} A^{ij}  P^{t-1} P_j) \geq \left(t-\frac1N\right) P^{t-2} v^{2-N} A^{ij}  P_i P_j  \quad\text{ in }M\setminus Z \,.
	\end{equation}
\end{lemma}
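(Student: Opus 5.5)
Expanding the left-hand side with the product rule and applying Lemma \ref{lem:identity-P} gives
\begin{equation*}
\nabla_i (v^{2-N} A^{ij} P^{t-1} P_j) = (t-1) P^{t-2} v^{2-N} A^{ij} P_i P_j + P^{t-1} N(N-1) v^{1-N}\left[\mathrm{Tr}(E^2) + (\mathrm{Ric}_g)_{ij} a^i a^j\right]
\end{equation*}
in $M\setminus Z$. Here $P>0$ by \eqref{eq:def-P-v2}, so $P^{t-1}$ and $P^{t-2}$ are well defined. Under the hypotheses of Theorem \ref{thm:2D} or Theorem \ref{thm:HD}, \eqref{eq:Ric} shows $\mathrm{Ric}_g\geq 0$: when $N=2$ it vanishes, and when $N\geq3$ with $-1<\alpha\le 0$ we have $1-(\alpha+1)^2\ge0$. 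We may therefore drop the Ricci term. It then suffices to prove the pointwise bound
\begin{equation*}
N(N-1)\, v\, P\, \mathrm{Tr}(E^2) \geq \left(1-\tfrac1N\right) v^{2} A^{ij} P_i P_j \quad\text{in } M\setminus Z,
\end{equation*}
since multiplying by $P^{t-2}v^{-N}$ and adding $(t-1)P^{t-2}v^{2-N}A^{ij}P_iP_j$ yields \eqref{eq:DvAPt}.

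The key step is to express $P_j$ in terms of $E$. By \eqref{eq:P_j}, \eqref{eq:def-Vj} and the symmetry of the Hessian,
\begin{equation*}
vP_j = -Pv_j + N(N-1)a^k v_{k,j}.
\end{equation*}
Write $W=(a^i_{,j})$ with $a^i_{,j}=A^{ik}v_{k,j}$. Using \eqref{eq:xiA}, $v_iA^{ik}=(N-1)a^k$, so $a^k v_{k,j} = \frac{1}{N-1} v_i W^i_j$. This gives
\begin{equation*}
vP_j = -Pv_j + N v_i W^i_j = -Pv_j + N v_i\left(E^i_j + \tfrac{P}{N}\delta^i_j\right) = N v_i E^i_j .
\end{equation*}
Hence $v^2 A^{ij}P_iP_j = N^2 A^{ij} (v_k E^k_i)(v_l E^l_j)$, and the task reduces to an algebraic inequality between this quadratic form and $\mathrm{Tr}(E^2)$.

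For that inequality, work at a point with a $g$-orthonormal frame whose first vector is along $\nabla_g v$, and set $\rho=|\nabla_g v|_g$. Then $A=\rho^{N-2}\,\mathrm{diag}(N-1,1,\dots,1)$ and $v_k=\rho\,\delta_{k1}$, so
\begin{equation*}
A^{ij}P_iP_j \;\propto\; \rho^{N}\left[(N-1)(E^1_1)^2 + \sum_{j\ge2}(E^1_j)^2\right].
\end{equation*}
Since $W = \mathcal A H$ with $\mathcal A=A$ and $H$ symmetric, the conjugate $\mathcal{A}^{-1/2} W \mathcal{A}^{1/2}$ is symmetric, and likewise for $E$ (this is the structure used in Remark \ref{rk:Tr-E}). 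Writing $S=\mathcal{A}^{-1/2}E\mathcal{A}^{1/2}$, we get $\mathrm{Tr}(E^2)=|S|^2$ and $\mathrm{Tr}(S)=0$. The components transform as $E^1_1=S_{11}$ and $E^1_j=\sqrt{N-1}\,S_{1j}$, so the bracket becomes $(N-1)\left(S_{11}^2+\sum_{j\geq2} S_{1j}^2\right)$. The standard traceless bound gives
\begin{equation*}
|S|^2 \ge S_{11}^2 + 2\sum_{j\ge2}S_{1j}^2 + \frac{S_{11}^2}{N-1} = \frac{N}{N-1}S_{11}^2 + 2\sum_{j\geq 2} S_{1j}^2 ,
\end{equation*}
where the last term comes from $\sum_{j\ge2}S_{jj}=-S_{11}$ and Cauchy--Schwarz. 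Finally, $P\rho^{-N}v \ge (N-1)$ by \eqref{eq:def-P-v2}, since $vP=(N-1)\rho^N+N^{1-N}$. Combining these, the required inequality follows after matching constants.

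The main obstacle is this final constant bookkeeping. One must check that the factor $P$ on the left-hand side, bounded below by $(N-1)\rho^N/v$, together with $\frac{N}{N-1}S_{11}^2+2\sum_{j\ge2}S_{1j}^2 \geq \frac{N}{N-1}\left(S_{11}^2+\sum_{j\ge2}S_{1j}^2\right)$ (valid because $2\geq\frac{N}{N-1}$), exactly produces the coefficient $t-\frac1N$ rather than a worse one. Any slack here would shift the threshold on $t$.
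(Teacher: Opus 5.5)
Your argument is correct and follows the paper's proof. You drop the Ricci term, write $vP_j=Nv_iE^i_j$ so that $v^2A^{ij}P_iP_j=N^2(N-1)\rho^{N}(S^2)_{11}$, bound this by $\mathrm{Tr}(E^2)$, and use $(N-1)\rho^N\le vP$.

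The constant bookkeeping you flagged closes with room to spare. The paper uses only $(S^2)_{11}\le |S|^2$ (its inequality $(E^2)^j_iv^iv_j\le \mathrm{Tr}(E^2)|\nabla_g v|_g^2$), and that already gives exactly $t-\frac1N$. Your traceless refinement $(S^2)_{11}\le\frac{N-1}{N}|S|^2$ yields the even stronger coefficient $t$.
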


\begin{proof}	
	Notice from \eqref{eq:Ric} that $\mathrm{Ric}_g\geq0$ under the assumptions of the lemma. Thus, it follows from \eqref{eq:di-Pj} that
\begin{equation}\label{1453}
\nabla_i (v^{2-N} A^{ij}  P^{t-1} P_j)\geq N(N-1)P^{t-1} v^{1-N}\mathrm{Tr} (E^2) + (t-1) P^{t-2} v^{2-N} A^{ij}  P_i P_j \,.
\end{equation}	
In view of \eqref{eq:xiA}, \eqref{eq:def-Vj}--\eqref{eq:def-aik}, \eqref{eq:def-E} and \eqref{eq:P_j}, we derive
	\begin{align}
		A^{ij}P_iP_j &=\frac{N^2(N-1)|\nabla_g v|_g^{N-2}}{v^2}\left((W^2)^j_{i}v^iv_j -\frac{2P}{N} W^j_{i} v^i v_j+ \frac{P^2}{N^2} |\nabla_g v|_g^2\right) \notag\\
		 &= \frac{N^2(N-1)|\nabla_g v|_g^{N-2}}{v^2} \left((E^2)^j_{i} v^iv_j\right) \label{eq:AijPiPj}
	\end{align}
where $W^j_{i}$, $(W^2)^j_{i}$ and $(E^2)^j_{i}$ denote the $(j, i)$ entry of the matrices $W$, $W^2$ and $E^2$, respectively. 

Since the matrix $\mathcal{A}(v)$ given by \eqref{eq:def-B} is symmetric with positive eigenvalues, one can rewrite $E$ defined in \eqref{eq:def-E} as $$E=|x|^{-N\alpha}\mathcal{A}(v)F,$$
where $F=H_g(v)-\frac{P}{N}|x|^{N\alpha}\mathcal{A}^{-1}(v)$, with $H_g(v)$ as in \eqref{eq:rewrite-W}. Then, as argued in \cite{Ou22} for Lemma 2.7 therein, we infer that
 $$(E^2)^j_i v^iv_j\leq \mathrm{Tr} (E^2)|\nabla_g v|_g^2\,.$$
Hence, we conclude from \eqref{eq:AijPiPj} that 
$$A^{ij}P_iP_j \leq N^2(N-1) v^{-2} |\nabla_g v|_g^{N} \mathrm{Tr} (E^2) $$
and then
	\begin{align*}
		P^{t-2} v^{2-N} A^{ij}  P_i P_j  & \leq  N^2(N-1) P^{t-2} v^{-N} |\nabla_g v|_g^{N} \mathrm{Tr} (E^2) \\
		 & \leq N^2P^{t-1} v^{1-N}\mathrm{Tr} (E^2)
	\end{align*}
thanks to $(N-1)|\nabla_g v|_g^{N}\leq Pv$ in view of \eqref{eq:def-P-v2},  which, inserted into \eqref{1453},  establishes the validity of \eqref{eq:DvAPt}.
\end{proof}

\subsection{Constancy of the $P$-function}\label{sec:P-3}
By virtue of Lemma \ref{lem:DvAP}, we find that the $P$-function we are considering is actually constant; see Proposition \ref{pro:P0} below. To this end, let us first translate inequality \eqref{eq:DvAPt} on $M\setminus Z$ into the following integral version on $M$.
\begin{lemma}\label{lem:intergal-idenitity-Pt} 
	Let $N$ and $\alpha$ be as in Theorem \ref{thm:2D} or Theorem \ref{thm:HD}. Then for any $t\in\mathbb{R}$ and $\varphi\in C_c^\infty(\mathbb{R}^N\setminus\{0\})$ with $\varphi\ge0$,
	\begin{equation}\label{eq:intergal-idenitity-Pt}
		-\int_{\mathbb{R}^N\setminus\{0\}} v^{2-N} A^{ij} P^{t-1} P_j \varphi_i \, d \mathscr{V}_g\geq \left(t-\frac1N\right) \int_{\mathbb{R}^N\setminus\{0\}} P^{t-2} v^{2-N} A^{ij}  P_i P_j \varphi \, d\mathscr{V}_g \,.
	\end{equation}
\end{lemma}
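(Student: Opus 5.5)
Since $\varphi$ is supported in $M=\mathbb{R}^N\setminus\{0\}$, away from the origin, the only obstruction to integrating \eqref{eq:DvAPt} against $\varphi$ is the critical set $Z$. There $v$ is merely $C^{1,\beta}$, $P$ is only $C^{0,\beta}\cap W^{1,2}_{\mathrm{loc}}$, and the vector field $X^j:=v^{2-N}A^{ij}P^{t-1}P_i$ may be singular, especially since $A^{ij}\sim|\nabla v|^{N-2}$ vanishes or degenerates on $Z$. Two facts make the argument possible. First, $P\ge \frac{1}{N^{N-1}v}>0$ is bounded away from zero on $\operatorname{supp}\varphi$, so $P^{t-1}$ and $P^{t-2}$ cause no trouble for any real $t$. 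Second, $Z$ has zero Lebesgue measure. I would handle $Z$ with a cut-off that vanishes near it. Let $G_\varepsilon(s)$ be a smooth nondecreasing function of $s=|\nabla_g v|_g^2$, equal to $0$ for $s\le\varepsilon$ and to $1$ for $s\ge 2\varepsilon$. Test \eqref{eq:DvAPt} with $\varphi\,G_\varepsilon(|\nabla_g v|_g^2)$, which is compactly supported in $M\setminus Z$, where everything is smooth. Integrating by parts there gives
\[
-\int X^j\varphi_j G_\varepsilon\,d\mathscr V_g-\int X^j\varphi\,\partial_j\big(G_\varepsilon(|\nabla_g v|_g^2)\big)\,d\mathscr V_g\ \ge\ \Big(t-\tfrac1N\Big)\int P^{t-2}v^{2-N}A^{ij}P_iP_j\,\varphi\, G_\varepsilon\,d\mathscr V_g .
\]

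Next, let $\varepsilon\to0$. By dominated convergence, $G_\varepsilon\to\chi_{M\setminus Z}$ a.e., which is $1$ a.e. since $|Z|=0$. For this I need $X\in L^1_{\mathrm{loc}}(M)$ and $A^{ij}P_iP_j\in L^1_{\mathrm{loc}}(M)$. From \eqref{eq:P_j} and \eqref{eq:def-Vj}, $P_j$ is a combination of $v_j$ and $V_j=a^kv_{k,j}$, and $a^k_{,j}=A^{kl}v_{l,j}$. The regularity $|\nabla u|^{N-2}\nabla u\in W^{1,2}_{\mathrm{loc}}(M)$ from Lemma \ref{lem:pre} controls $A^{ij}v_{j,k}$ in $L^2_{\mathrm{loc}}$. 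Hence $A^{ij}P_j$ is bounded by $|\nabla v|^{N-1}$ times an $L^2_{\mathrm{loc}}$ function plus a bounded term. Similarly, $A^{ij}P_iP_j\lesssim |\nabla v|^{N-2}|P_j|^2$ is controlled by $|\nabla v|^{N}|D^2v|^2|\nabla v|^{2(N-2)}\sim |D(|\nabla v|^{N-2}\nabla v)|^2$ times bounded factors. Alternatively, \eqref{eq:AijPiPj} gives the bound $\lesssim |\nabla v|^N\mathrm{Tr}(E^2)$, again integrable. So the first and third terms converge to the corresponding terms in \eqref{eq:intergal-idenitity-Pt}.

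The main obstacle is the middle term, $\int X^j\varphi\, G_\varepsilon'(|\nabla_g v|^2)\,\partial_j|\nabla_g v|_g^2$. It is supported in $\{\varepsilon<|\nabla_g v|^2<2\varepsilon\}$, and $|G'_\varepsilon|\lesssim\varepsilon^{-1}$. I would bound it by
\[
\varepsilon^{-1}\int_{\{\varepsilon<|\nabla v|^2<2\varepsilon\}}|\nabla v|^{N-1}|A P'|\,|D^2v|\,\varphi .
\]
Then I would write $|\nabla v|^{N-2}|D^2 v|\lesssim|D(|\nabla v|^{N-2}\nabla v)|\in L^2_{\mathrm{loc}}$, and $|A^{ij}P_j|\lesssim |D(|\nabla v|^{N-2}\nabla v)|\,|\nabla v|+O(|\nabla v|^{N-1})$. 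This leaves an integrand of size $\varepsilon^{-1}|\nabla v|^{2}\,|D a|^2\lesssim |Da|^2$ restricted to the shrinking sets $\{\varepsilon<|\nabla v|^2<2\varepsilon\}$. These sets decrease to the empty set, so the term tends to $0$ by absolute continuity of the integral. If the powers do not balance this neatly, I would use the sign structure instead: $\partial_j|\nabla v|^2$ paired with $A^{ij}P_i$ expands via \eqref{eq:P_j} into $-\frac Pv v_jA^{ij}\partial_i|\nabla v|^2+\frac{N(N-1)}v A^{ij}V_j\partial_i|\nabla v|^2$. The second piece is nonnegative, since $V_j$ is proportional to $|\nabla v|^{N-2}\partial_j|\nabla v|^2$, and so it can be discarded with the correct sign once $P^{t-1}\varphi G'_\varepsilon\ge0$. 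The first piece is handled by the $L^1$ estimate above.

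For $N=2$, Remark \ref{rk:P2D} makes everything smooth, and the lemma is immediate from integration by parts.
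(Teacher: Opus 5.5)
Your proposal is correct and is essentially the paper's argument. The paper also cuts off near $Z$ with a function of the gradient modulus, namely $\psi_\varepsilon=\min\{(|\nabla v|^{N-1}-\varepsilon^{N-1})^+/\varepsilon^{N-1},1\}$ (Lipschitz in $|\nabla v|^{N-1}\in W^{1,2}_{\mathrm{loc}}$, followed by a density step) rather than your smooth $G_\varepsilon(|\nabla_g v|_g^2)$, and it kills the transition term by absorbing $\varepsilon^{1-N}$ into $|\nabla v|^{N-1}$ via $|\nabla P|\lesssim|\nabla v|\,\bigl(1+|\nabla(|\nabla v|^{N-1})|\bigr)$ on $\operatorname{supp}\varphi$ together with $|Z|=0$; with that bound (your ``$|AP'|$'' in the middle-term estimate should read $|\nabla P|$) the powers balance, so your sign-based fallback is unnecessary.
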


\begin{proof}
	First, recall that $v\in C^{1,\beta}_{\mathrm{loc}}(\mathbb{R}^N\setminus \{0\})$ and
	$P\in  C^{0,\beta}_{\mathrm{loc}}(\mathbb{R}^N\setminus\{0\})\cap W_{\mathrm{loc}}^{1,2}(\mathbb{R}^N\setminus\{0\})$. For $0<\varepsilon\ll1$, we let 
$$
\psi_{\varepsilon}(x):=\min\left\{\frac{\left(|\nabla v(x)|^{N-1}-\varepsilon^{N-1}\right)^+}{\varepsilon^{N-1}}\, ,\,1\right\}.
$$ 
It follows that $\psi_\varepsilon(x)\in W_{\mathrm{loc}}^{1,2}(\mathbb{R}^N\setminus\{0\})\cap C(\mathbb{R}^N\setminus\{0\})$. By setting 
\begin{equation*}
	U^\varepsilon:=\{x\in\mathbb{R}^N:|\nabla v(x)|\geq2^{\frac{1}{N-1}}\varepsilon\}\, ,
\end{equation*}
we have
\begin{equation*}
	\psi_{\varepsilon}(x)=
	\begin{cases}
		1 &  \text{if }x\in U^\varepsilon \\
		0 & \text{if }x\in Z \, ,
	\end{cases}
\end{equation*}
and $\psi_\varepsilon(x)\to\psi_0(x)$ as $\varepsilon\to0$ for every $x\in\mathbb{R}^N\setminus\{0\}$, where 
\begin{equation*}
	\psi_0(x):=
	\begin{cases}
		1 &  \text{if }x\in \mathbb{R}^N\setminus Z\\
		0 & \text{if }x\in Z \,.
	\end{cases}
\end{equation*}
Given $\varphi\in C_c^\infty(\mathbb{R}^N\setminus\{0\})$, let us write it as
\begin{equation*}
	\varphi=\psi_\varepsilon\varphi+(1-\psi_\varepsilon)\varphi \,.
\end{equation*}
Observe that $\psi_\varepsilon\varphi$ is compactly supported in $\mathbb{R}^N\setminus (Z\cup\{0\})$ and it belongs to $ W_{0}^{1,2}(\mathbb{R}^N\setminus (Z\cup\{0\}))$. Thanks to Lemma \ref{lem:DvAP}, it is clear that \eqref{eq:intergal-idenitity-Pt} holds for $\varphi\in C_c^\infty(\mathbb{R}^N\setminus (Z\cup\{0\}))$ with $\varphi\geq 0$. Hence, by density, one can conclude that \eqref{eq:intergal-idenitity-Pt} also holds for $\varphi$ replaced by $\psi_\varepsilon\varphi$.
	
	On the other hand, since 
$$v\geq \delta, \quad |\nabla P|\leq C|\nabla v|\Big[1+|\nabla (|\nabla v|^{N-1})|\Big] \hbox{ on }\mathrm{supp} (\varphi)$$ 
in view of \eqref{eq:def-v} and \eqref{eq:def-P-v2} for some $\delta,C>0$, by \eqref{eq:def-Ag} and \eqref{eq:def-A} we can derive that
\begin{align*}
		& \lim_{\varepsilon \to 0} \left|\int_{\mathbb{R}^N\setminus\{0\}} v^{2-N} A^{ij}  P^{t-1} P_j ((1-\psi_\varepsilon)\varphi)_i \, d\mathscr{V}_g \right| \\
		 \lesssim &  \lim_{\varepsilon \to 0} \int_{\mathrm{supp} (\varphi) \setminus U^\varepsilon}|\nabla v|^{N-2}|\nabla P|\left(|\nabla\varphi|+|\varphi||\nabla\psi_\varepsilon|\right) dx  \\
		\lesssim &  \lim_{\varepsilon \to 0} \int_{\mathrm{supp} (\varphi) \setminus U^\varepsilon} |\nabla v|^{N-1}  \left(1+|\nabla(|\nabla v|^{N-1})|\right)
 \left(|\nabla\varphi|+\varepsilon^{1-N}|\varphi||\nabla(|\nabla v|^{N-1})|\right) dx \\
\lesssim   &  \int_Z \left(1+|\nabla(|\nabla v|^{N-1})|\right)|\nabla(|\nabla v|^{N-1})| dx=0
\end{align*}
in view of $|Z|=0$ and 
\begin{align*}
		&  \lim_{\varepsilon \to 0} \left|\int_{\mathbb{R}^N\setminus\{0\}} P^{t-2} v^{2-N} A^{ij}  P_i P_j (1-\psi_{\varepsilon})\varphi \, d\mathscr{V}_g \right|\\
\lesssim &  \lim_{\varepsilon \to 0} \int_{\mathrm{supp} (\varphi) \setminus U^\varepsilon} |\nabla v|^N  \left(1+|\nabla(|\nabla v|^{N-1})|\right)^2 (1-\psi_{\varepsilon}) dx=0 \,.
	\end{align*}

	Thus, we conclude that 
	\begin{align*}
		& -\int_{\mathbb{R}^N\setminus\{0\}} v^{2-N} A^{ij}  P^{t-1} P_j \varphi_i \, d\mathscr{V}_g \\
		 = & -\int_{\mathbb{R}^N\setminus\{0\}} v^{2-N} A^{ij}  P^{t-1} P_j (\psi_{\varepsilon}\varphi)_i \, d\mathscr{V}_g \\ &-\int_{\mathbb{R}^N\setminus\{0\}} v^{2-N} A^{ij}  P^{t-1} P_j ((1-\psi_{\varepsilon})\varphi)_i \, d\mathscr{V}_g \\
		\geq & \left(t-\frac1N\right) \int_{\mathbb{R}^N\setminus\{0\}} P^{t-2} v^{2-N} A^{ij}  P_i P_j \psi_{\varepsilon}\varphi \, d\mathscr{V}_g \\
		& -\int_{\mathbb{R}^N\setminus\{0\}} v^{2-N} A^{ij}  P^{t-1} P_j ((1-\psi_{\varepsilon})\varphi)_i \, d\mathscr{V}_g \,,
	\end{align*} 
	which yields \eqref{eq:intergal-idenitity-Pt} by letting $\varepsilon\to0$. This completes the proof.
\end{proof}

In terms of Lemma \ref{lem:intergal-idenitity-Pt}, we obtain the following rigidity result.
\begin{proposition}\label{pro:P0}
	Let $N$ and $\alpha$ be as in Theorem \ref{thm:2D} or Theorem \ref{thm:HD}. Then $P\equiv P_0$ in $\mathbb{R}^N\setminus\{0\}$ for some constant $P_0$. 
\end{proposition}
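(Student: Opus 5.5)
We test the integral inequality of Lemma \ref{lem:intergal-idenitity-Pt} with $\varphi=\eta^{\gamma}$ for a cut-off $\eta$ supported in $\mathbb{R}^N\setminus\{0\}$, and choose $t>\frac1N$ so that the right-hand side is a positive multiple of the ``energy'' $I:=\int P^{t-2}v^{2-N}A^{ij}P_iP_j\,\eta^\gamma\,d\mathscr V_g$. Since $A^{ij}$ is positive semidefinite, Cauchy--Schwarz in the $A$-metric bounds the left-hand side:
\begin{equation*}
\Big|\int v^{2-N}A^{ij}P^{t-1}P_j\,\gamma\eta^{\gamma-1}\eta_i\,d\mathscr V_g\Big|\le \gamma\, I^{1/2}\Big(\int P^{t}v^{2-N}A^{ij}\eta_i\eta_j\,\eta^{\gamma-2}\,d\mathscr V_g\Big)^{1/2}.
\end{equation*}
This gives $I\lesssim \int P^{t}v^{2-N}|\nabla_g v|_g^{N-2}|\nabla_g\eta|_g^2\eta^{\gamma-2}\,d\mathscr V_g$. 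Since $|A|\lesssim|\nabla_g v|_g^{N-2}$, the whole argument then reduces to showing that this error term tends to zero for a suitable family $\eta=\eta_R$.

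For the error, we translate to Euclidean quantities. We have $d\mathscr V_g=|x|^{N\alpha}dx$, $|\nabla_g\eta|_g=|x|^{-\alpha}|\nabla\eta|$ and $|\nabla_g v|_g=|x|^{-\alpha}|\nabla v|$, with $v=e^{-u/N}$ and $\nabla v=-\frac{v}{N}\nabla u$. At infinity, Lemma \ref{lem:pre} gives $v\sim |x|^{\frac{N(\alpha+1)}{N-1}}$ and $|\nabla v|\sim v/|x|$. Hence $|\nabla_g v|_g^N/v\sim |x|^{-N\alpha}|x|^{\frac{N(\alpha+1)}{N-1}}\cdot |x|^{-N}\cdot|x|^{\frac{N(\alpha+1)(N-1)}{N-1}}$, and a direct check shows $P$ is bounded at infinity. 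Indeed, $P\to$ an explicit constant thanks to \eqref{eq:asym-grad}, which matches $U_{\alpha,\lambda}$. On the annulus $R<|x|<2R$ with $|\nabla\eta|\lesssim R^{-1}$, the integrand $v^{2-N}|\nabla_gv|_g^{N-2}|\nabla_g\eta|^2_g|x|^{N\alpha}$ scales like $v^{2-N}(v/R)^{N-2}R^{-2}R^{N\alpha-\alpha(N-2)-2\alpha}=R^{-N}$. Integrated over a volume $\sim R^N$, this leaves an $O(1)$ contribution, which is borderline. To fix this I would use a logarithmic cut-off $\eta_R$, namely $\eta=1$ for $|x|<R$, $\eta=2-\log|x|/\log R$ up to $R^2$, so that the annular contribution becomes $O(1/\log R)\to0$. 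The same borderline scaling should occur near the origin because of the conformal invariance of the $N$-Laplacian. There, we use $v\ge\delta>0$, $v$ bounded, and the log cut-off $\eta=\log(|x|/\varepsilon^2)/\log(1/\varepsilon)$ on $\varepsilon^2<|x|<\varepsilon$. The near-origin error is $\int |x|^{-2\alpha}|x|^{-\alpha(N-2)}|\nabla v|^{N-2}|\nabla \eta|^2|x|^{N\alpha}dx=\int|\nabla v|^{N-2}|\nabla\eta|^2dx$, where we also need $P^t$ bounded near $0$. When $\alpha\ge0$ and $u\in C^1$, $|\nabla v|$ is bounded, but $|\nabla_g v|_g=|x|^{-\alpha}|\nabla v|$ may blow up, so $P$ might too. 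For $N=2$ Proposition \ref{pro:refine-gradient} gives $|\nabla u|=O(|x|^\alpha)$, hence $P$ is bounded, and $\int|\nabla\eta|^2\to0$ for the 2D log cut-off. For $N\ge3$ and $-1<\alpha\le0$, we have $|x|^{-\alpha}\to0$, so $P$ is bounded where $\nabla u$ is bounded. In the range $\alpha\le-1/N$, we use \eqref{eq:KM-gradient} with Hölder, choosing $t$ small (close to $1/N$, possibly handling $P^t$ via the integrability of $|\nabla u|^{N-1}$) so that $\int_{\varepsilon^2<|x|<\varepsilon}P^t|\nabla v|^{N-2}|x|^{(2-N)(-\alpha)}|\nabla\eta|^2\ldots\to0$.

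Letting the cut-offs go to their limits gives $I=0$, so $A^{ij}P_iP_j=0$ a.e. Off $Z$, $A$ is positive definite, so $\nabla P=0$ in $M\setminus Z$. Since $P\in W^{1,2}_{\mathrm{loc}}(M)$ and $|Z|=0$, $\nabla P=0$ a.e. in the connected set $M$, so $P$ is constant. I expect the main obstacle to be the control near the origin for $N\ge3$, $\alpha<0$ close to $-1$. There, $\nabla u$ may be unbounded and only \eqref{eq:KM-gradient} is available, so the choice of $t$ and of Hölder exponents must be tuned. If this fails, I would first try to obtain a pointwise bound $|\nabla u|=O(|x|^{-\beta})$ with $\beta<1$ via a scaling argument around $0$.
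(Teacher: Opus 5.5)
Your argument is correct in outline. It differs from the paper's proof mainly in how the borderline, scale-invariant error is handled.

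\textbf{Comparison with the paper.} The paper uses ordinary cut-offs with $|\nabla_g\varphi|_g\lesssim R^{-1}$, resp.\ $r^{-1}$. It proves only that the two annular integrals in \eqref{eq:integrable-vAP} stay bounded, and deduces the finiteness \eqref{eq:integrability} of the full energy. It then reuses the unsquared Cauchy--Schwarz bound \eqref{eq:pf-prop-P_0-1}: the energy on $B_R^g\setminus B_{2r}^g$ is controlled by the square root of the energy on the two annuli, which vanishes as the tail of a convergent integral.

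Your logarithmic cut-offs instead turn the error $\int P^t v^{2-N}|\nabla v|^{N-2}|\nabla\eta|^2\,dx$ into $O(1/\log R)$ in a single pass (you correctly note that the conformal weights cancel). The price is that you need $P^tv^{2-N}|\nabla v|^{N-2}\lesssim |x|^{2-N}$ uniformly on all of $R<|x|<R^2$. This bound is available from \eqref{eq:asym-u}--\eqref{eq:asym-grad}. The paper's two-step argument only needs bounded annular integrals, so it is more robust when only integral information is at hand. Your closing step ($A$ positive definite off $Z$, $|Z|=0$, $P\in W^{1,2}_{\mathrm{loc}}(M)$, $M$ connected) is more careful than the paper's bare appeal to ellipticity of $A^{ij}$.

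\textbf{The step you left unverified.} Near the origin for $-1<\alpha\le-\frac1N$, the argument does go through, with the paper's choice $p=-\frac1\alpha-\delta$, $t=\frac1N+\delta$.
\begin{itemize}
\item Near $0$ one has $P^t\lesssim 1+|x|^{-N\alpha t}|\nabla v|^{Nt}$, and by \eqref{eq:KM-gradient} $|\nabla v|\in L^q_{\mathrm{loc}}$ with $q=\frac{N(N-1)p}{N-p}$.
\item H\"older gives $\int_{B_s}|x|^{-N\alpha t}|\nabla v|^{Nt+N-2}dx\lesssim s^{-N\alpha t+N-N(Nt+N-2)/q}$. The exponent tends to $2+(N-1)(\alpha+1)>2$ as $\delta\to0$.
\item The constant part of $P^t$ contributes $O(s^{(N-2)(1-N/q)})$ per dyadic annulus, and $q>N$ precisely because $p>1$.
\item Summing over dyadic annuli with $|\nabla\eta|\lesssim|x|^{-1}$ therefore gives a vanishing contribution.
\end{itemize}

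\textbf{Two smaller corrections.} First, your claim that the origin is borderline ``by conformal invariance'' is inaccurate for $N\ge3$. Since $v(0)>0$, the factor $v^{2-N}|\nabla v|^{N-2}$ is not scale invariant at $0$, and the near-origin error tends to zero even with ordinary cut-offs. It is genuinely borderline only for $N=2$, where the error is $\int P^t|\nabla\eta|^2dx$ and your logarithmic cut-off is actually needed. Second, for $N=2$ and $-1<\alpha<0$, Proposition \ref{pro:refine-gradient} does not apply. That case has to be handled as in your $N\ge3$ discussion: $C^1$ regularity if $\alpha>-\frac12$, and \eqref{eq:KM-gradient} otherwise.
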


\begin{proof}
		Denote by $B^g_s$ the geodesic ball (with respect to the metric $g$) centered at the origin of radius $s$. Given $0<r<R$, let $\varphi\geq 0$ be such that
	\begin{equation*}
		\begin{cases}
			\varphi=1 & \text { in } B_R^g \backslash B_{2 r}^g \\
			\varphi=0 & \text { in } B_r^g \cup (\mathbb{R}^N \backslash B_{2 R}^g) \\ 
			\left|\nabla_g \varphi\right|_g \leq \frac{C}{R} & \text { in } B_{2 R}^g \backslash B_R^g \\
			\left|\nabla_g \varphi\right|_g \leq \frac{C}{r} & \text { in } B_{2 r}^g \backslash B_r^g \,,
		\end{cases}
	\end{equation*}
where $C$ is some constant independent of $r$ and of $R$.

By applying Lemma \ref{lem:intergal-idenitity-Pt} with $\varphi^2$ in place of $\varphi$ as the test function and by the Cauchy--Schwartz inequality, we have
	\begin{align}
		&\quad \left(t-\frac1N\right)\int_{\mathbb{R}^N\setminus\{0\}} P^{t-2} v^{2-N} A^{ij} P_i P_j \varphi^2\, d\mathscr{V}_g \notag \\
		& \leq -2 \int_{\mathbb{R}^N\setminus\{0\}} P^{t-1}v^{2-N} A^{ij} P_j \varphi\varphi_i \, d\mathscr{V}_g \notag\\
	& \leq 2 \int_{\mathbb{R}^N\setminus\{0\}}\left[ P^{\frac{t-2}{2}}v^{\frac{2-N}{2}}\varphi (A^{ij}P_i P_j)^{\frac12}\right]\left[ P^{\frac{t}{2}}v^{\frac{2-N}{2}}(A^{ij}\varphi_i \varphi_j)^{\frac12}\right] d\mathscr{V}_g \notag\\
		& \leq 2 \left[\int_{\mathbb{R}^N\setminus\{0\}} P^{t-2}v^{2-N} \varphi^2 A^{ij}  P_i P_j \, d\mathscr{V}_g \right]^{1/2}\left[\int_{\mathbb{R}^N\setminus\{0\}} P^t v^{2-N} A^{ij}\varphi_i \varphi_j \, d\mathscr{V}_g\right]^{1 / 2}. \label{eq:pf-prop-P_0-1}
		\end{align}
Thus, for $t>\frac{1}{N}$,
	\begin{align}
		& \quad \left(t-\frac1N\right)^2\int_{B_R^g\setminus B_{2r}^g} P^{t-2}v^{2-N} A^{ij} P_i P_j \, d\mathscr{V}_g \notag\\
		& \leq \left(t-\frac1N\right)^2\int_{\mathbb{R}^N\setminus\{0\}} P^{t-2} v^{2-N} A^{ij} P_i P_j \varphi^2\, d\mathscr{V}_g \notag\\
		&\leq 4 \int_{\mathbb{R}^N\setminus\{0\}} P^t v^{2-N} A^{ij}\varphi_i \varphi_j \, d\mathscr{V}_g \label{eq:Pt-v-A-phi}\\
		& \leq 4(N-1)\int_{(B_{2 R}^g \backslash B_R^g)\cup (B_{2r}^g \backslash B_r^g)} P^t v^{2-N}|\nabla_g v|_g^{N-2} |\nabla_g\varphi|_g^2 \, d\mathscr{V}_g \notag\\
		& \lesssim \frac{1}{R^2} \int_{B_{2 R}^g \backslash B_R^g} P^t v^{2-N} |\nabla_g v|_g^{N-2} \, d\mathscr{V}_g + \frac{1}{r^2} \int_{B_{2r}^g \backslash B_r^g} P^t v^{2-N} |\nabla_g v|_g^{N-2}\, d\mathscr{V}_g \,. \label{eq:integrable-vAP}
	\end{align}
	
	We are going to show that, for a suitable $t>\frac{1}{N}$, the sum of the two integrals in \eqref{eq:integrable-vAP} is bounded, uniformly as $R\to+\infty$ and $r\to 0$.

	Indeed, by the asymptotics of $u$ at infinity (see \eqref{eq:asym-u} and \eqref{eq:asym-grad}), we find that the first integral in \eqref{eq:integrable-vAP} is uniformly bounded with respect to $R$.
	
	  By Lemma \ref{lem:pre}, $v\in C^1_{\mathrm{loc}}(\mathbb{R}^N)$ when $\alpha>-\frac{1}{N}$. Then it is easy to verify that when $-\frac{1}{N}<\alpha\leq 0$, the second integral in \eqref{eq:integrable-vAP} is bounded uniformly as $r\to0$. This is also true when $N=2$ and $\alpha>0$, thanks to Proposition \ref{pro:refine-gradient}. 
	  
	  If $-1<\alpha\leq-\frac{1}{N}$, then by H\"older inequality and property \eqref{eq:KM-gradient} we have 
	   \begin{equation}\label{eq:pre-alpha-1N}
	  	\int_{B_s}|x|^{-\alpha(Nt-2)}|\nabla v|^{Nt+N-2} \, dx \lesssim s^{-\alpha(Nt-2)+N-\frac{(N-p)(Nt+N-2)}{(N-1)p}}\,,
	  \end{equation}
	 for any $1<p<-\frac{1}{\alpha}\leq N$ so that $Np(N-1)>(N-p)(Nt+N-2)$. Take $t>\frac{1}{N}$ and $p$ such that $$-\alpha(Nt-2)+N-\frac{(N-p)(Nt+N-2)}{(N-1)p}\geq 2(\alpha+1)\,.$$
For instance, one can choose $p=-\frac{1}{\alpha}-\delta$ and $t=\dfrac{1}{N} + \delta$ for $\delta$ small enough.
	Then \eqref{eq:pre-alpha-1N} implies
	\begin{equation*}
		\int_{B_s}|x|^{N\alpha-\alpha(Nt+N-2)}|\nabla v|^{Nt+N-2} \, dx =O(s^{2(\alpha+1)})\quad\text{as }s\to 0\,.
	\end{equation*}
This entails the estimate
	 \begin{equation*}
	 	\frac{1}{r^2} \int_{B_{r}^g} P^t v^{2-N} |\nabla_g v|_g^{N-2}\, d\mathscr{V}_g= O(1) \quad\text{as }r\to 0\,.
	 \end{equation*}

As a result, we have proved that for suitable $t>\frac{1}{N}$, \eqref{eq:integrable-vAP} is uniformly bounded with respect to $R$ and $r$. Hence, letting $R\to+\infty$ and $r\to 0$ in \eqref{eq:integrable-vAP} gives
\begin{equation}\label{eq:integrability}
	\int_{\mathbb{R}^N\setminus\{0\}} P^{t-2}v^{2-N} A^{ij} P_i P_j \, d\mathscr{V}_g <+\infty\,.
\end{equation}
Now, going back to \eqref{eq:pf-prop-P_0-1} and using \eqref{eq:Pt-v-A-phi}, we obtain
	\begin{align*}
			& \left(t-\frac1N\right)\int_{B_R^g\setminus B_{2r}^g} P^{t-2}v^{2-N} A^{ij} P_i P_j \, d\mathscr{V}_g \\
			 \leq & \left(t-\frac1N\right)\int_{\mathbb{R}^N\setminus\{0\}} P^{t-2} v^{2-N} A^{ij} P_i P_j \varphi^2\, d\mathscr{V}_g \\
			\lesssim & \left(\int_{(B_{2 R}^g \backslash B_R^g)\cup (B_{2r}^g \backslash B_r^g)} P^{t-2} v^{2-N} A^{ij}  P_i P_j \, d\mathscr{V}_g \right)^{1/2}.
	\end{align*}
 Letting $R\to+\infty$ and $r\to 0$ in the above inequality and taking advantage of \eqref{eq:integrability} yields
$$\int_{\mathbb{R}^N\setminus\{0\}} P^{t-2}v^{2-N} A^{ij}  P_i P_j \, d\mathscr{V}_g =0 \,.$$
This implies $A^{ij}P_i P_j = 0$ in $\mathbb{R}^N \setminus \{0\}$ and then $\nabla P=0$ thanks to the ellipticity of $A^{ij}$. Thus $P\equiv P_0$ for some constant $P_0$, completing the proof.
\end{proof}

\section{Proofs of Theorems \ref{thm:2D} and \ref{thm:HD}} \label{sec:pf}
Thanks to Proposition \ref{pro:P0} and Lemma \ref{lem:identity-P}, we prove Theorems \ref{thm:2D} and \ref{thm:HD} in this section.

\begin{proof}[Proof of Theorem \ref{thm:2D}]
	Since $P \equiv P_0$ by Proposition {\red \ref{pro:P0}}, it follows from \eqref{eq:Delta-g-P} in Remark \ref{rk:P2D} that
	\begin{equation}\label{eq:kg0}
		H_g(v) -\frac{P_0}{2} g \equiv 0 \quad\text{ in }\mathbb{R}^2\setminus\{0\}\,,
	\end{equation}
where $v$ is given by \eqref{eq:def-v}. Note that for $g=e^{2 \phi} \delta_e$ with $\phi=\alpha \log |x|$,
	\begin{equation}\label{eq:def-Hg}
		H_g(v) =D^2 v+(\nabla \phi \cdot \nabla v) \delta_e- (d \phi \otimes d v+d v \otimes d \phi)\,,
	\end{equation}
where, here and in the sequel, $D^2v=(v_{ij})$ and the bracket $(\nabla \phi \cdot \nabla v)$ stand for the Hessian of $v$ and the inner product in the Euclidean space $\mathbb{R}^N$ with the metric $\delta_e$, respectively. Combining \eqref{eq:kg0}--\eqref{eq:def-Hg}, we get
	\begin{equation*}
		v_{12}=\frac{\alpha}{|x|^2}(x_1v_2+x_2v_1) \quad\text{ in }\mathbb{R}^2\setminus\{0\}. 
	\end{equation*}
In complex notation, this can be rewritten as
	\begin{equation}\label{eq:com-v12}
	\tilde{v}_{zz}-\tilde{v}_{\bar{z}\bar{z}}=\frac{\alpha}{|z|^2}(\tilde{v}_{z}\bar{z}-\tilde{v}_{\bar{z}}z) \quad\text{ for }z\neq 0\,.
\end{equation}

Recall from \eqref{eq:def-P-N}--\eqref{eq:def-P-v2} that
 \begin{equation*}
 	\Delta v = |x|^{2\alpha}\Delta_g v = P_0|x|^{2\alpha}\quad\text{ in }\mathbb{R}^2\setminus\{0\}. 
 \end{equation*}
 Let $\tilde{v}=v-\frac{P_0}{4(\alpha+1)^2}|x|^{2\alpha+2}$. It is seen that $\tilde v$ is continuous at the origin and satisfies
	$$\Delta \tilde{v}=0 \quad\text{in }\mathbb{R}^2\setminus\{0\}.$$
Thus, $\tilde v$ can be expanded in complex notation as
	\begin{equation}\label{eq:series-v}
		\tilde v=\sum_{k=0}^\infty (b_k z^k+ \overline{b_k}\bar{z}^k)\,,
	\end{equation}
	where $k\in\mathbb{N}$ and $b_k, z\in\mathbb{C}$.  Substituting \eqref{eq:series-v} into \eqref{eq:com-v12} gives
	$$\sum_{k=1}^\infty k(k-1)(b_k z^{k-2}-\overline{b_k}\bar{z}^{k-2})=\sum_{k=1}^\infty \alpha k(b_k z^{k-2}-\overline{b_k}\bar{z}^{k-2})\,.$$
	Thus, we deduce that  if $\alpha\in\mathbb{N}$, then $b_k=0$ for $k\notin\{0, \alpha+1\}$, and that if $\alpha\notin\mathbb{N}$, then $b_k=0$ for any $k\ge 1$. This leads to
	$$\tilde v=
	\begin{cases}
		b_0+\overline{b_0} & \text{if }\alpha\notin\mathbb{N}\\
		b_0+\overline{b_0}+ b_{\alpha+1} z^{\alpha+1} + \overline{b_{\alpha+1}}\bar{z}^{\alpha+1}& \text{if } \alpha\in\mathbb{N}\,.
	\end{cases}
	$$
	Hence, we conclude that
	\begin{equation}\label{eq:formula-v}
		v=\tilde{v}+\frac{P_0}{4(\alpha+1)^2}|x|^{2\alpha+2} 
		=\frac{P_0}{4(\alpha+1)^2}|z^{\alpha+1}+c|^2+b
	\end{equation}
	for some $b\in\mathbb{R}$ and $c\in\mathbb{C}$, with $c=0$ when $\alpha\notin\mathbb{N}$. Furthermore, since \eqref{eq_general} with $N=2$ reads in complex notation as
	\begin{equation}\label{eq:u-complex}
		-4u_{z\bar{z}}=|z|^{2\alpha}e^u \,,
	\end{equation}
	we can determine $b=\frac{1}{2P_0}$ by substituting $u=-2\log v$, with the form \eqref{eq:formula-v}, into \eqref{eq:u-complex}.  Consequently, from \eqref{eq:formula-v} with $b=\frac{1}{2P_0}$ we see that $u$ takes the form \eqref{eq:thm-u}, with $\lambda=\frac{P_0}{\sqrt{2}(\alpha+1)}$. This completes the proof.
\end{proof}

\begin{proof}[Proof of Theorem \ref{thm:HD}]
	Note from \eqref{eq:Ric} that, when $-1<\alpha\leq 0$, 
	\begin{equation}\label{eq:nonnegative-Ric}
		(\mathrm{Ric}_g)_{ij}a^ia^j=\frac{(N-2)\left(1-(\alpha+1)^2\right)}{|x|^2}\left[|a|^2-\frac{(a\cdot x)^2}{|x|^2}\right]\geq 0 \,.
	\end{equation}
	Recall that here $a$ is given by \eqref{eq:def-a} and its components $a^i$ are introduced in \eqref{eq:def-ak}.
	
	As in \eqref{eq:def-v}, let $v=e^{-\frac{u}{N}}$. We shall show that $P\equiv P_0$ implies
	\begin{equation}\label{eq:fmu-v}
		v(x)=\frac{P_0^{\frac{1}{N-1}}(N-1)}{[N(\alpha+1)]^\frac{N}{N-1}}|x-x_0|^{\frac{N(\alpha+1)}{N-1}}+\frac{1}{P_0 N^{N-1}} \quad\text{ in }\mathbb{R}^N \,,
	\end{equation}
for some $x_0\in\mathbb{R}^N$, with $x_0=0$ when $\alpha\neq 0$.
	
	Indeed, in view of \eqref{eq:di-Pj}, it is forced that 
	\begin{equation}\label{eq:TrE=Ric=0}
		\mathrm{Tr}(E^2)=0\quad\text{ and }\quad (\mathrm{Ric}_g)_{ij}a^ia^j=0\,, \quad\text{ in }\mathbb{R}^N\setminus (Z\cup\{0\})\,.
	\end{equation}
	Then we get $E=0$ by Remark \ref{rk:Tr-E}, and it follows from \eqref{eq:def-E} that
	\begin{equation}\label{eq:E=0}
		a^i_{,j}=\frac{P_0}{N}\delta^i_j \quad\text{ in }\mathbb{R}^N\setminus (Z\cup\{0\})\,,
	\end{equation}
where $a^i_{,j}$ are introduced in \eqref{eq:def-aik}.
	Thus, by the continuity of $a$ and the fact that $|Z|=0$, we infer that if $\alpha=0$ then
	\begin{equation}\label{eq:fmu-a-1}
		a=\frac{P_0}{N} (x-x_0) \quad\text{ in }\mathbb{R}^N \,.
	\end{equation}
	If $-1<\alpha<0$, then by \eqref{eq:nonnegative-Ric} and \eqref{eq:TrE=Ric=0}, it entails not only \eqref{eq:E=0} but also
	\begin{equation}\label{eq:alamx}
		a=\mu(x) x \quad\text{ in }\mathbb{R}^N\setminus (Z\cup\{0\})\,,
	\end{equation}
for some function $\mu$. Inserting \eqref{eq:alamx} into \eqref{eq:E=0} gives
	\begin{equation}\label{eq:lam-Gamma}
		\frac{P_0}{N}\delta^i_j=a^i_{,j}=\nabla_j(a^i)=\nabla_j(\mu x^i)=\mu\delta^i_j+\mu_jx^i+\mu\Gamma^{i}_{kj}x^k \,,
	\end{equation}
	where $\nabla_j$ are the covariant derivatives as introduced in Subsection \ref{sec:P-1} and the notation $\Gamma^i_{kj}$ is the Christoffel symbol associated with the metric $g=|x|^{2\alpha}\delta_e$, which is given by
	\begin{equation}\label{eq:Gamma}
		\Gamma^i_{kj}=\alpha|x|^{-2}(x^k\delta_{ij}+x^j\delta_{ik}-x^i\delta_{kj}).
	\end{equation}
	From \eqref{eq:lam-Gamma}--\eqref{eq:Gamma}, we deduce that
	$$\mu\equiv \frac{P_0}{N(\alpha+1)} \quad\text{ in }\mathbb{R}^N\setminus (Z\cup\{0\}) \,.$$
	Hence, it is seen from \eqref{eq:alamx} that when $-1<\alpha<0$, 
	\begin{equation}\label{eq:fmu-a-2}
		a= \frac{P_0}{N(\alpha+1)} x \quad\text{ in }\mathbb{R}^N\setminus\{0\}\,.
	\end{equation}

	Now, in view of \eqref{eq:def-P-v2}, \eqref{eq:fmu-a-1} and \eqref{eq:fmu-a-2}, we find that in $\mathbb{R}^N\setminus\{0\}$,
	$$\left[\frac{1}{N-1}\left(P_0v-\frac{1}{N^{N-1}}\right)\right]^{\frac{N-2}{N}}\nabla v=\frac{P_0}{N(\alpha+1)} |x-x_0|^{2\alpha}(x-x_0)$$
	for some $x_0\in\mathbb{R}^N$, with $x_0=0$ if $\alpha\neq 0$.
	This implies that for some constant $b$, 
	\begin{equation}\label{eq:fmu-v-1}
		v=\frac{1}{P_0}\left(C_N|x-x_0|^{2(\alpha+1)}+b\right)^{\frac{N}{2(N-1)}}+\frac{1}{P_0 N^{N-1}}\quad\text{ in }\mathbb{R}^N\setminus\{0\}\,,
	\end{equation} 
	where $$C_N=\frac{P_0^2(N-1)^{2-\frac{2}{N}}}{N^2(\alpha+1)^2} \,.$$
 By substituting $u=-N\log v$ with the form \eqref{eq:fmu-v-1} into equation \eqref{eq_general}, one can determine that the constant $b$ must be $0$. Thus, by the continuity of $v$ at the origin we have shown \eqref{eq:fmu-v}. Finally, from \eqref{eq:fmu-v} it is clear that the solution $u$ takes the form \eqref{eq:thm-u-HD}, with
 $$\lambda=\frac{P_0^{\frac{N}{N-1}} N^{N-1}(N-1)}{[N(\alpha+1)]^{\frac{N}{N-1}}} \,.$$
The proof is completed.
\end{proof}

\section{The linearized problem}\label{sec:linear}
In this section, we consider the linearized equation of \eqref{eq_general} at $U_\alpha$:
\begin{equation}\label{eq:Linearized}
	-\operatorname{div}(\mathcal{A}(U_\alpha)\nabla\phi)=|x|^{N\alpha}e^{U_{\alpha}} \phi \quad\text{in }\mathbb{R}^N,
\end{equation}
where $U_\alpha$ is the radial solution of \eqref{eq_general} given by \eqref{eq:U}, and $\mathcal{A}(U_\alpha)$ is as in \eqref{eq:def-B}, i.e.
\begin{equation*}
\mathcal{A}(U_\alpha)=|\nabla U_\alpha|^{N-2} \operatorname{Id} + (N-2) |\nabla U_\alpha|^{N-4}\nabla U_\alpha\otimes \nabla U_\alpha \,.
\end{equation*}
By a solution to \eqref{eq:Linearized}, we mean a function $\phi\in W_{\mathrm{loc}}^{1,N}(\mathbb{R}^N) \cap L^\infty(\mathbb{R}^N)$ such that
\begin{equation}\label{eq:def-solu-Lin}
	\int_{\Omega} \mathcal{A}(U_\alpha)(\nabla\phi \cdot \nabla\varphi)\,dx =\int_{\Omega} |x|^{N\alpha}e^{U_\alpha} \phi \varphi\,dx
\end{equation}
 for any open bounded subset $\Omega \subset \mathbb{R}^N$ and any $\varphi\in W_0^{1, N}(\Omega)$.

 Since problem \eqref{eq_general} has scaling invariance with $U_{\alpha,\lambda}$ defined in \eqref{eq:thm-u-HD} as a family of radial solutions, it is seen that the function
 \begin{equation}\label{eq:def-Z0}
	Z_\alpha:=\left.\partial_\lambda\right|_{\lambda=1} U_{\alpha,\lambda}=\frac{N-1-|x|^{\frac{N(\alpha+1)}{N-1}}}{1+|x|^{\frac{N(\alpha+1)}{N-1}}}
\end{equation}
is a solution to \eqref{eq:Linearized}. In this connection, notice that \eqref{eq_general} is also invariant under translations when $\alpha=0$, that is, for $\xi\in\mathbb{R}^N$, the function $U_0(x-\xi)$ solves \eqref{eq_general} as well. Thus, in this case it is clear that besides $Z_0$, \eqref{eq:Linearized} also has solutions 
 \begin{equation}\label{eq:Zi}
	\mathcal{Z}_i:=\left.\partial_{ \xi^i}\right|_{\xi=0}U_0(x-\xi)=\frac{N^2}{N-1}\frac{|x|^{\frac{1}{N-1}-1}x}{1+|x|^{\frac{N}{N-1}}}\,, \quad i=1,\cdots,N.
\end{equation}
Here we adopt notations as in Subsection \ref{sec:P-1}.

We say that $U_\alpha$ is \emph{non-degenerate} if the space of solutions to \eqref{eq:Linearized} is spanned only by the function $Z_\alpha$ (and the functions $\mathcal{Z}_i$ if $\alpha=0$, $1\leq i\leq N$), otherwise we say $U_\alpha$ is \emph{degenerate}. In the following theorem, we characterize \emph{all} solutions to \eqref{eq:Linearized}, which shows that $U_\alpha$ could be degenerate.

\begin{theorem}\label{thm:linearized}
	Let $N\geq 2$ and $\alpha>-1$. If $\alpha\neq \alpha_k$ for any $k\in\mathbb{N}$, then the space of solutions of \eqref{eq:Linearized} has dimension 1 and is spanned by $Z_\alpha$ defined in \eqref{eq:def-Z0}.
	
	If $\alpha=\alpha_k$ for some $k\in \mathbb{N}$, then the space of solutions of \eqref{eq:Linearized} has dimension $1+\mathcal{M}(k)$ and is spanned by $Z_\alpha$ and the functions
	\begin{equation}\label{eq:Yki}
		Z_{\alpha,i}:=\frac{|x|^{\frac{\alpha+1}{N-1}-k}\mathcal{Y}_{k,i}(x)}{1+|x|^{\frac{N(\alpha+1)}{N-1}}}\,,\quad\quad i=1,\cdots,\mathcal{M}(k)\,,
	\end{equation}
	where 
	\begin{equation}\label{eq:Multi}
		\mathcal{M}(k)=\frac{(N+2 k-2)(N+k-3)!}{(N-2)!k!}
	\end{equation}
	and $\mathcal{Y}_{k,i}$, $1\leq i\leq\mathcal{M}(k)$, form a basis of homogeneous harmonic polynomials of degree $k$ in $\mathbb{R}^N$. Here $\alpha_k$ is given by \eqref{eq:alpha-k}.
\end{theorem}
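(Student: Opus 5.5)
The plan is to decompose a solution $\phi$ of \eqref{eq:Linearized} in spherical harmonics and reduce to a family of radial ODEs, one for each degree $k$. First I would compute the coefficient matrix explicitly. Since $U_\alpha$ is radial, $\nabla U_\alpha=U_\alpha'(r)\frac{x}{r}$ with $U_\alpha'<0$. Hence
\[
\mathcal{A}(U_\alpha)=|U_\alpha'|^{N-2}\Big(\mathrm{Id}+(N-2)\tfrac{x\otimes x}{r^2}\Big),
\]
and the operator splits as
\[
\operatorname{div}(\mathcal{A}\nabla\phi)=(N-1)r^{1-N}\big(r^{N-1}|U_\alpha'|^{N-2}\phi_r\big)_r+|U_\alpha'|^{N-2}r^{-2}\Delta_{\mathbb{S}^{N-1}}\phi .
\]
I would write $\phi=\sum_k\sum_i\phi_{k,i}(r)\mathcal{Y}_{k,i}(x/r)$, where $-\Delta_{\mathbb{S}^{N-1}}$ has eigenvalue $\lambda_k=k(k+N-2)$ on degree-$k$ harmonics. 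Projecting the weak formulation \eqref{eq:def-solu-Lin} onto each $\mathcal{Y}_{k,i}$, using test functions $\varphi=\eta(r)\mathcal{Y}_{k,i}$, gives for every $k$ the ODE
\[
(N-1)\big(r^{N-1}w\,\phi_k'\big)'-\lambda_k r^{N-3}w\,\phi_k+r^{N-1}r^{N\alpha}e^{U_\alpha}\phi_k=0,\qquad w=|U_\alpha'|^{N-2}.
\]
The projections $\phi_k$ are bounded, since $\phi\in L^\infty$.

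Next I would substitute $s=r^{\frac{N(\alpha+1)}{N-1}}$ or, more conveniently, $t=\log r$. The explicit formulas are $U_\alpha'=-\frac{N^2(\alpha+1)}{N-1}\frac{r^{\beta-1}}{1+r^{\beta}}$ with $\beta=\frac{N(\alpha+1)}{N-1}$, and $|x|^{N\alpha}e^{U_\alpha}=c_N(\alpha+1)^N r^{N\alpha}(1+r^\beta)^{-N}$. After dividing by the weight, each ODE becomes a hypergeometric-type equation with regular singular points at $r=0$ and $r=\infty$. I would compute the indicial exponents at $0$ and at $\infty$ and look for explicit solutions. The mode $k=0$ contains $Z_\alpha$. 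The second solution of that mode behaves like $\log r$ or a negative power at $0$ (or grows at $\infty$), so it is unbounded, which leaves only $Z_\alpha$. For $k\ge1$, I would test the ansatz $\phi_k=\frac{r^{\gamma}}{1+r^\beta}$. The weighted ODE then reduces to matching coefficients, which should give $\gamma=\frac{\alpha+1}{N-1}$ together with the condition
\[
(N-1)(\alpha+1)^2=\lambda_k,
\]
that is, $\alpha=\alpha_k$. This is consistent with \eqref{eq:Yki}, where the factor $|x|^{-k}\mathcal{Y}_{k,i}(x)$ is the angular part. As a sanity check, $\alpha=0$ with $k=1$ recovers $\mathcal{Z}_i$.

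The main obstacle is showing that for $k\ge1$ a bounded solution exists only when $\alpha=\alpha_k$, and that it is then unique up to scaling. My approach would be a Wronskian/Sturm argument. The indicial roots at $0$ are $\gamma_\pm$ with $\gamma_-<0$, and bounded solutions must pick $\gamma_+$. Likewise, at infinity boundedness forces the decaying branch. I would then show that the solution regular at $0$ is decaying at $\infty$ exactly when $(N-1)(\alpha+1)^2=\lambda_k$. One route is to reduce the ODE, via $\phi_k=\frac{r^{\gamma_+}}{1+r^\beta}h(s)$, to a Gauss hypergeometric equation in $s/(1+s)$. Boundedness at both ends then requires termination of the series, and this yields the quantization condition with $h$ constant. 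The alternative is a Sturm comparison with the explicit function $r^{\gamma}/(1+r^\beta)$ built from the nearest admissible parameter. The condition then fixes $k$, and because $\lambda_k$ is increasing in $k$, at most one $k$ qualifies. Counting the harmonics gives dimension $\mathcal{M}(k)$, with \eqref{eq:Multi} the standard dimension formula. Regularity issues (weak solutions, the behaviour at $0$ when $\alpha<0$) would be handled by local ODE theory on each mode, since each $\phi_k$ is $C^1$ on $(0,\infty)$.
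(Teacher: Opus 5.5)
Your proposal is correct. Up to the radial reduction it matches the paper: your divergence-form ODE, divided by $(N-1)r^{N-1}|U_\alpha'|^{N-2}$, is exactly the paper's equation for $\psi_k$. The paper then sets $\eta_k(r)=\psi_k(r^{1/(\alpha+1)})$ to get \eqref{eq:etak}, in which $\alpha$ enters only through $\lambda_k/((N-1)(\alpha+1)^2)$.

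The routes diverge at the step you flag as the main obstacle. The paper proves Lemma~\ref{lem:eta-beta} with a Frobenius argument followed by singular Sturm--Liouville theory:
\begin{itemize}
\item both endpoints are limit-point, so the operator is self-adjoint;
\item the zero-free eigenfunction $\bar\eta_1$ is the ground state, which forces any other bounded solution to have parameter $<1$ and to change sign;
\item a Wronskian comparison with $\bar\eta_0$ then excludes the remaining range $(0,1)$.
\end{itemize}

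Your hypergeometric route replaces all of this with an explicit computation, and it does close. In $s=r^{\frac{N(\alpha+1)}{N-1}}$ the equation is Fuchsian with exactly three singular points $s=0,-1,\infty$. Set $\nu=\frac{(N-1)\lambda_k}{N^2(\alpha+1)^2}$; the exponents are then $\rho_\pm$ (the roots of $\rho^2+(N-2)\rho-\nu=0$), $\{-1,N\}$ and $\{\pm\sqrt\nu\}$ respectively.

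With your factor $s^{\rho_+}(1+s)^{-1}$ you get $h=F(a,b;c;-s)$, where $a,b=\rho_+-1\pm\sqrt\nu$ and $c=1+\rho_+-\rho_-$. So the Gauss equation lives in the variable $-s$. In $s/(1+s)$ you also need the Pfaff factor $(1+s)^{-a}$; this is a harmless slip.

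The connection formula at $s=\infty$ puts the coefficient $\Gamma(c)\Gamma(a-b)/(\Gamma(a)\Gamma(c-b))$ in front of the growing branch $s^{\sqrt\nu}$. The logarithmic terms that appear when $2\sqrt\nu\in\mathbb{N}$ do not change it. Since $c-b=2-\rho_-+\sqrt\nu\ge 2$ and $a>-1$ for $k\ge1$, boundedness forces $a=0$. That means $\sqrt\nu=\frac{N-1}{N}$, i.e.\ $\lambda_k=(N-1)(\alpha+1)^2$, with $h\equiv1$. For $k=0$ one has $a=b=-1$, and the terminating polynomial gives $Z_\alpha$.

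What your route buys:
\begin{itemize}
\item It is elementary and explicit.
\item Frobenius theory applies verbatim, since the coefficients are rational in $s$; in the paper's variable they contain $r^{N/(N-1)}$, which is not analytic at $0$ for $N\ge3$.
\item It treats all parameter values at once.
\item With a spectral parameter $\Lambda$, the exponents at $s=-1$ become the roots of $\rho^2-(N-1)\rho-N\Lambda=0$, and the same termination argument yields the eigenvalues of the final corollary explicitly, including the paper's $\mathfrak{b}_k=\rho_++\sqrt\nu$.
\end{itemize}
The paper's spectral argument is more structural and would survive perturbations that destroy explicit solvability, at the price of heavier Sturm--Liouville machinery.
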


\begin{remark}
Note that the characterization result in Theorem \ref{thm:linearized} is already available in \cite[Theorem 1]{DEM12} (earlier in \cite{BP98} only for $\alpha=0$) when $N=2$, and in \cite{Tak24} when $N\geq 3$ and $\alpha=0$. Indeed, if $N=2$, then by \eqref{eq:alpha-k} and \eqref{eq:Multi},  $\{\alpha_k:k\in \mathbb{N}\setminus\{0\} \}=\mathbb{N}$ and $\mathcal{M}(k)\equiv 2$ for any $k\in \mathbb{N}\setminus\{0\}$, and in complex notation, the basis $\{\mathcal{Y}_{k,1},\mathcal{Y}_{k,1}\}$ introduced in \eqref{eq:Yki} coincides with $\{\mathrm{Re}(z^k),\mathrm{Im}(z^k)\}$. Thus, when $N=2$ and $\alpha \in \mathbb{N}$ we see that the functions $Z_\alpha$ and $Z_{\alpha,i}$ ($i=1,2$) above correspond to $Z_1$, $Z_2$ and $Z_3$ exhibited in \cite{DEM12} (with $\mu=1$ and $a=0$ therein), respectively.

On the other hand, if $\alpha=\alpha_1=0$, then $\mathcal{M}(1)=N$, and clearly, in this case the basis $\{\mathcal{Y}_{1,i}\}$ coincides  with the coordinate basis $\{x^i\}$. Thus, one can find that when $\alpha=0$, the functions $Z_{\alpha,i}$ given by \eqref{eq:Yki} coincide  with $\mathcal{Z}_i$ in \eqref{eq:Zi}, up to a constant factor. So that Theorem \ref{thm:linearized} shows the nondegeneracy of $U_0$, as was discussed in \cite{Tak24}. 
\end{remark}

We mention that the result in Theorem \ref{thm:linearized} is analogous to that obtained in \cite[Theorem 1.3]{GGN13} for the linearized problem associated with the H\'enon equation in $\mathbb{R}^N$, $N\geq 3$. We also refer to \cite{FN19,FZ22,PV21} for related studies on the nondegeneracy of radial solutions to the critical $p$-Laplace equation in $\mathbb{R}^N$, with $1<p<N$.

Notice that the problems considered in the aforementioned works naturally admit the Hilbert space structure $\mathcal{D}^{1,2}(\mathbb{R}^N)$, or $\mathcal{D}^{1,2}_{*}(\mathbb{R}^N)$ (a weighted Sobolev space  defined in \cite{PV21}). In such a setting, spectral theory and bifurcation theory can be applied to investigate geometric and/or topological properties of solutions to the original problems through the analysis of the corresponding linearized equations. 

In contrast, it is not clear which Hilbert space framework is suitable for the linearized problem \eqref{eq:Linearized} associated with Liouville-type equation \eqref{eq_general}. Nevertheless, Theorem \ref{thm:linearized} can still be proved via separation of variables and a careful Sturm--Liouville analysis.

\begin{proof}[Proof of Theorem \ref{thm:linearized}]
First, let us notice that any solution $\phi$ to \eqref{eq:Linearized} is smooth outside the origin by standard elliptic regularity theory (note that \eqref{eq:Linearized} is uniformly elliptic in each given compact subset in $\mathbb{R}^N\setminus\{0\}$). Then we can derive that in $\mathbb{R}^N\setminus\{0\}$,
\begin{align}
	&\operatorname{div}(\mathcal{A}(U_\alpha)\nabla\phi) \notag\\
	= &\operatorname{div} \left(|\nabla U_\alpha|^{N-2} \nabla \phi\right)+(N-2) \operatorname{div}\left(|\nabla U_\alpha|^{N-4}(\nabla U_\alpha\cdot \nabla \phi) \nabla U_\alpha\right) \notag\\
	 = & |\nabla U_\alpha|^{N-2} \Delta \phi+\left(\nabla|\nabla U_\alpha|^{N-2}\cdot \nabla \phi\right) +(N-2)|\nabla U_\alpha|^{N-4}(\nabla U_\alpha\cdot \nabla \phi) \Delta U_\alpha \notag\\
	&+(N-2)(\nabla U_\alpha\cdot \nabla \phi)\left(\nabla|\nabla U_\alpha|^{N-4}\cdot \nabla U_\alpha\right) \notag\\
	&+(N-2)|\nabla U_\alpha|^{N-4}(\nabla(\nabla U_\alpha\cdot \nabla \phi)\cdot \nabla U_\alpha)\,. \label{eq:divAphi}
\end{align}
Also, by direct computations we have in $\mathbb{R}^N\setminus\{0\}$,
\begin{equation}  \label{eq:cal-U-1}
\nabla U_\alpha=-\left(\frac{N^2(\alpha+1)}{N-1}\right) \frac{|x|^{\frac{N(\alpha+1)}{N-1}-2}x}{1+|x|^{\frac{N(\alpha+1)}{N-1}}}\,,
\end{equation}
\begin{align}
	&\nabla\left(|\nabla U_\alpha|^{k}\right) \notag\\
	= & \frac{k}{N-1}\left(\frac{N^2(\alpha+1)}{N-1}\right)^{k} \frac{|x|^{\frac{k(N\alpha+1)}{N-1}-2}x}{\left(1+|x|^{\frac{N(\alpha+1)}{N-1}}\right)^{k+1}}\left[1+N\alpha+(1-N)|x|^{\frac{N(\alpha+1)}{N-1}}\right], \label{eq:cal-U-2}
\end{align}
	\begin{align}
		& 
		\nabla\left(|\nabla U_\alpha|^{N-4}\right) \cdot \nabla U_\alpha \notag\\ 
		= &-\frac{N-4}{N-1}\left(\frac{N^2(\alpha+1)}{N-1}\right)^{N-3} \frac{|x|^{\frac{(N-3)(N\alpha+1)}{N-1}-1}}{\left(1+|x|^{\frac{N(\alpha+1)}{N-1}}\right)^{N-2}}\left[1+N\alpha+(1-N)|x|^{\frac{N(\alpha+1)}{N-1}}\right], \label{eq:cal-U-3}
	\end{align}
and
\begin{align}
&\Delta U_\alpha \notag\\
= &-\left(\frac{N^2(\alpha+1)}{N-1}\right) \frac{|x|^{\frac{N(\alpha+1)}{N-1}-2}}{\left(1+|x|^{\frac{N(\alpha+1)}{N-1}}\right)^2}\left[\frac{N(\alpha+1)}{N-1}+N-2+(N-2)|x|^{\frac{N(\alpha+1)}{N-1}}\right]. \label{eq:cal-U-4}
\end{align}
Hence, substituting \eqref{eq:cal-U-1}--\eqref{eq:cal-U-4} into \eqref{eq:divAphi} we deduce that in $\mathbb{R}^N\setminus\{0\}$, any solution $\phi$ to \eqref{eq:Linearized} fulfills
\begin{align}
	 	&|x|^2 \Delta \phi+ \frac{N(N-2)(\alpha+1)(x \cdot \nabla \phi)}{1+|x|^{\frac{N(\alpha+1)}{N-1}}}+(N-2)\phi_{ij}x^ix^j \notag\\ = &-\frac{N^3(\alpha+1)^2}{N-1} \frac{|x|^{\frac{N(\alpha+1)}{N-1}}\phi}{\left(1+|x|^{\frac{N(\alpha+1)}{N-1}}\right)^2}\,. \label{eq:linear}
	 \end{align}
	 
Now, let us write $\phi$ as the form
\begin{equation}\label{eq:phi}
	\phi(x)=\phi(r,\theta)=\sum_{k=0}^{\infty} \psi_k(r) Y_k(\theta)\,,
\end{equation}
where $r=|x|$, $\theta=\frac{x}{|x|}\in\mathbb{S}^{N-1}$ and
$$
\psi_k(r)=\int_{\mathbb{S}^{N-1}} \phi(r, \theta) Y_k(\theta)\,d \theta\,, \quad k\in\mathbb{N}\,.
$$
Here $Y_k(\theta)$ denotes the $k$-th spherical harmonic that satisfies
$$
-\Delta_{\mathbb{S}^{N-1}} Y_k=\mu_k Y_k,
$$
where $\Delta_{\mathbb{S}^{N-1}}$ is the Laplace--Beltrami operator on $\mathbb{S}^{N-1}$ with the standard metric and
\begin{equation*}
	\mu_k=k(N-2+k)
\end{equation*}
is the $k$-th eigenvalue of $-\Delta_{\mathbb{S}^{N-1}}$ whose multiplicity is given by $\mathcal{M}(k)$ (see \eqref{eq:Multi}).

Denote $\psi_k^{\prime}=\frac{d \psi_k}{dr}$ and $\psi_k^{\prime \prime}=\frac{d^2 \psi_k}{dr^2}$. By inserting \eqref{eq:phi} into \eqref{eq:linear}, we find that for each $k\in\mathbb{N}$ and any $r>0$,
\begin{align*}
	\psi_k^{\prime\prime}(r) & +\left(1+\frac{N(N-2)(\alpha+1)}{N-1} \frac{1}{1+r^{\frac{N(\alpha+1)}{N-1}}}\right) \frac{\psi_k^{\prime}(r)}{r} -\frac{\mu_k}{N-1} \frac{\psi_k(r)}{r^2} \notag \\
	& + \frac{N^3(\alpha+1)^2}{(N-1)^2} \frac{r^{\frac{N(\alpha+1)}{N-1}-2}}{\left(1+r^{\frac{N(\alpha+1)}{N-1}}\right)^2}\psi_k(r)=0 \,. 
\end{align*}
Then by letting
$\eta_k(r)=\psi_k(r^{\frac{1}{\alpha+1}})$, we obtain
\begin{align}
	&\eta_k^{\prime \prime}(r)+ \left(1+\frac{N(N-2)}{N-1} \frac{1}{1+r^{\frac{N}{N-1}}}\right) \frac{\eta_k^{\prime}(r)}{r}
	-\frac{\mu_k}{(N-1)(\alpha+1)^2} \frac{\eta_k(r)}{r^2} \notag\\
	& +\frac{N^3}{(N-1)^2} \frac{r^{\frac{N}{N-1}-2}}{\left(1+r^{\frac{N}{N-1}}\right)^2}\eta_k(r)=0 \,. \label{eq:etak}
\end{align}
 Exploiting Lemma \ref{lem:eta-beta} below, we see that for each $k\in\mathbb{N}$, ODE \eqref{eq:etak} admits solutions in $C^2(0,+\infty)\cap L^\infty(0,+\infty)$ if and only if $$\frac{\mu_k}{(N-1)(\alpha+1)^2} \in\{0,1\}\,,$$
which implies $k=0$ or $\alpha=\alpha_k$. Moreover, Lemma \ref{lem:eta-beta} tells us that in these two cases solutions to \eqref{eq:etak} are generated by $\bar{\eta}_0$ (see \eqref{eq:def-bareta0}) and $\bar{\eta}_1$ (see \eqref{eq:def-bareta1}), respectively. Translating these back to $\psi_k$ and recalling \eqref{eq:phi} yields the assertions in Theorem \ref{thm:linearized}. The proof is completed.
\end{proof}

In the above proof, we used the following result.
\begin{lemma}\label{lem:eta-beta}
	For a constant $\beta\geq 0$, the ODE
	\begin{equation}\label{eq:eta}
		\eta^{\prime \prime}+ \left(1+\frac{N(N-2)}{N-1} \frac{1}{1+r^{\frac{N}{N-1}}}\right) \frac{\eta^{\prime}}{r}
		-\beta \frac{\eta}{r^2}+\frac{N^3}{(N-1)^2} \frac{r^{\frac{N}{N-1}-2}}{\left(1+r^{\frac{N}{N-1}}\right)^2}\eta=0
	\end{equation}
	admits non-trivial solutions $\eta\in C^2(0,+\infty)\cap L^\infty(0,+\infty)$ if and only if $\beta\in\{0,1\}$. Moreover, the space of such solutions is spanned by 
	\begin{equation}\label{eq:def-bareta0}
		\bar{\eta}_0(r):=\frac{N-1-r^{\frac{N}{N-1}}}{1+r^{\frac{N}{N-1}}}\quad\quad\text{ when}\quad  \beta=0,
	\end{equation}
	and by 
	\begin{equation}\label{eq:def-bareta1}
	 \bar{\eta}_1(r):=\frac{r^{\frac{1}{N-1}}}{1+r^{\frac{N}{N-1}}}\quad\quad\text{ when}\quad  \beta=1,
	\end{equation}
	respectively.
\end{lemma}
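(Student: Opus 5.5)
The plan is to treat \eqref{eq:eta} as a singular Sturm--Liouville problem on $(0,+\infty)$. First we describe the two Frobenius branches at each endpoint. Then we use the explicit solutions $\bar\eta_0,\bar\eta_1$ as comparison functions in Wronskian (Picone) identities.

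\textbf{Self-adjoint form.} With $q(r):=\frac{N^3}{(N-1)^2}\frac{r^{\frac{N}{N-1}-2}}{(1+r^{\frac{N}{N-1}})^2}$ and the weight
\[
p(r)=r\exp\Big(\int^r \frac{N(N-2)}{N-1}\frac{ds}{s(1+s^{\frac{N}{N-1}})}\Big),
\]
\eqref{eq:eta} becomes $(p\eta')'+pq\eta-\beta p r^{-2}\eta=0$. Note that $p(r)\sim r^{c_0}$ as $r\to0$ and $p(r)\sim r$ as $r\to\infty$, where $c_0=1+\frac{N(N-2)}{N-1}$.

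\textbf{Behaviour at $0$.} The indicial equation is $\gamma^2+(c_0-1)\gamma-\beta=0$, whose roots satisfy $\gamma_+\gamma_-=-\beta\le0$. If $\beta>0$ there is one positive and one negative root. If $\beta=0$ the roots are $0$ and $1-c_0<0$. Since $q=O(r^{\frac{N}{N-1}-2})$ is a mild perturbation of the Euler equation, the standard Frobenius/Volterra argument gives a fundamental system $\eta\sim r^{\gamma_+}$ and $\eta\sim r^{\gamma_-}$. Hence every bounded solution is a multiple of the regular solution $\eta_\beta\sim r^{\gamma_+}$.

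\textbf{Behaviour at $\infty$.} The first-order coefficient tends to $1/r$ and $q=O(r^{-\frac{N}{N-1}-2})$ is integrable against $r$. So the solutions behave like $r^{\pm\sqrt\beta}$ when $\beta>0$, and like $1$ and $\log r$ when $\beta=0$. Consequently, for $\beta>0$ boundedness forces $\eta_\beta=O(r^{-\sqrt\beta})$ and $\eta_\beta'=O(r^{-\sqrt\beta-1})$. This reduces the lemma to deciding for which $\beta$ the regular solution also has the decaying behaviour at $\infty$.

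\textbf{Existence for $\beta\in\{0,1\}$.} A direct check shows that $\bar\eta_0$ and $\bar\eta_1$ solve \eqref{eq:eta} for $\beta=0$ and $\beta=1$ respectively, and both are bounded. The solution spaces are one-dimensional because the second Frobenius branch at $0$ is unbounded. For $\beta=0$, $\bar\eta_0$ is regular at $0$ and tends to $-1$ at $\infty$; it has exactly one zero, $r_0=(N-1)^{\frac{N-1}{N}}$. For $\beta=1$, $\bar\eta_1\sim r^{\frac1{N-1}}$ at $0$ (indeed $\gamma_+=\frac1{N-1}$), $\bar\eta_1\sim r^{-1}$ at $\infty$, and $\bar\eta_1>0$.

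\textbf{Nonexistence for $\beta\notin\{0,1\}$.} Let $\eta$ be a bounded nontrivial solution for some $\beta\ge0$, $\beta\notin\{0,1\}$; note this forces $\beta>0$. For two solutions $\eta,\zeta$ with parameters $\beta,\beta'$ we have the identity
\[
\big[p(\eta'\zeta-\eta\zeta')\big]_a^b=(\beta-\beta')\int_a^b p\,r^{-2}\eta\zeta\,dr .
\]

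\emph{Case $\beta>1$.} Compare with $\bar\eta_1$ on $(0,\infty)$. By the asymptotics above, the boundary terms vanish at both ends. Since $\beta>0$, the regular branches vanish at $0$, so at $0$ one uses the orders $r^{\gamma_+}$ together with $p\sim r^{c_0}$; at $\infty$ one uses $p\sim r$ and the decay. This gives $\int_0^\infty p r^{-2}\eta\bar\eta_1=0$, so $\eta$ must change sign. Say $\eta(z)=0$ and $\eta>0$ on $(0,z)$. Comparing on $(0,z)$, the sign of the boundary term $-p\eta'(z)\bar\eta_1(z)\ge0$ is incompatible with $(\beta-1)\int_0^z p r^{-2}\eta\bar\eta_1>0$.

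\emph{Case $0<\beta<1$.} The same argument on $(0,\infty)$ shows that $\eta$ has a zero $z_1$. Now compare with $\bar\eta_0$, for which $\beta-\beta'=\beta>0$:
\begin{itemize}
\item On $(0,z_1)$, Sturm comparison forces $\bar\eta_0$ to vanish in $(0,z_1)$, since $\bar\eta_0$ is the more oscillatory equation.
\item On $(z_1,\infty)$, $\eta$ vanishes at $z_1$ and decays at $\infty$, while $p(\eta'\bar\eta_0-\eta\bar\eta_0')\to0$ at $\infty$. So again $\bar\eta_0$ must vanish in $(z_1,\infty)$.
\end{itemize}
This gives $\bar\eta_0$ two zeros, contradicting the fact that it has exactly one.

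\textbf{Main obstacle.} The delicate step is justifying that the Wronskian boundary terms vanish at the singular endpoints $0$ and $\infty$, particularly at $\infty$ in the comparison with $\bar\eta_0$, which tends to $-1$ rather than decaying. There one needs $p\,\eta'=O(r^{-\sqrt\beta})\to0$ and $p\,\eta\,\bar\eta_0'=O(r^{1-\sqrt\beta-\frac{N}{N-1}-1})\to0$, which I expect to follow from the precise Frobenius/Levinson asymptotics. Establishing those asymptotics rigorously (a perturbed Euler equation with integrable perturbation) is the technical core of the argument.
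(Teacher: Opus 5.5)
Your proof is correct. Its final step, the Lagrange identity against $\bar\eta_0$ on $(0,a)$ and on $(b,+\infty)$ with $a,b$ the first and last zeros of $\eta$, is exactly the paper's Case~1/Case~2 argument. Your $p$ and $p\,r^{-2}$ coincide, up to constants, with the paper's $P$ and $w$.

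The difference is in how you get $\beta<1$ and a zero of $\eta$. The paper realizes $\mathcal L=\frac1w[-(P\eta')'+Q\eta]$ as a self-adjoint operator on $L^2_w(0,+\infty)$, with both endpoints limit-point. It then invokes the spectral and oscillation theorems for singular Sturm--Liouville problems, with a case distinction on the bottom $\sigma_0$ of the essential spectrum. This shows that $-1$, whose eigenfunction $\bar\eta_1$ has no zeros, is the lowest eigenvalue, so $-\beta>-1$ and $\bar\eta$ must vanish somewhere.

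You obtain both facts by hand. The identity with $\bar\eta_1$ on $(0,+\infty)$ gives $\int_0^\infty p\,r^{-2}\eta\bar\eta_1\,dr=0$, hence a sign change. The same identity on $(0,a)$ rules out $\beta>1$.

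What each route buys:
\begin{itemize}
\item Yours is more elementary and self-contained. It needs only the endpoint asymptotics, which both proofs use anyway, and it avoids the domain, limit-point and essential-spectrum considerations.
\item The paper's operator-theoretic setup is heavier, but it is reused for the eigenvalue ordering $\Lambda_{k,0}<\Lambda_{k,1}<\cdots$ in the subsequent corollary.
\end{itemize}

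Two minor remarks:
\begin{itemize}
\item For $N=2$ and $\beta=0$ the indicial roots at $0$ coincide, since $1-c_0=0$. The second branch then behaves like $\log r$ rather than $r^{1-c_0}$; it is still unbounded, so nothing changes.
\item The ``technical core'' you flag is routine. After the substitution $s=r^{\pm 1/(N-1)}$, both $0$ and $\infty$ are regular singular points. Convergent Frobenius expansions then give the asymptotics of $\eta$ and $\eta'$, with nonvanishing leading coefficients. Those nonvanishing coefficients are also what guarantee that the first and last zeros of $\eta$ exist, which your comparisons on $(0,z_1)$ and $(z_1,+\infty)$ implicitly use.
\end{itemize}
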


 	\begin{proof}
 		First, one can directly verify that $\bar{\eta}_0$ and $\bar{\eta}_1$ are two solutions of \eqref{eq:eta} when $\beta=0$ and $\beta=1$ respectively. By the Frobenius method for second-order ODEs (see for instance \cite[Theorem 4.5]{Tes12}), we infer that there is only one branch of non-trivial solutions $\eta$ to \eqref{eq:eta} in the space $C^2(0,+\infty)\cap L^\infty(0,+\infty)$, which satisfies 
 		\begin{equation}\label{eq:asym-eta}
 			\eta=O(r^{\alpha_+}) \quad \text{as } r\to 0\,, \quad \text{ and }\quad \eta=O(r^{-\sqrt\beta}) \quad \text{as } r\to +\infty\,,
 		\end{equation}
 		where $\alpha_+$ is one of the indicial roots corresponding to the endpoint $0$: 
 $$\alpha_+=-\frac{N(N-2)}{2(N-1)} + \sqrt{\frac{N^2(N-2)^2}{4(N-1)^2}+\beta} \,.$$ 
Thus, it is seen that the space of such solutions to \eqref{eq:eta} is spanned by $\bar{\eta}_0$ when $\beta=0$ and $\bar{\eta}_1$ when $\beta=1$, respectively.

If there were a nontrivial solution $\bar{\eta}\in C^2(0,+\infty)\cap L^\infty(0,+\infty)$ for some $\beta\notin \{0,1\}$,  then a contradiction would arise. Indeed, letting
\begin{equation}\label{eq:tildeU}
 		\tilde{U}(r)=\log\frac{c_N}{\left(1+r^{\frac{N}{N-1}}\right)^N}\,,
 	\end{equation}
 	 note that \eqref{eq:eta} is equivalent to the following singular Sturm--Liouville equation:
 	 \begin{equation}\label{eq:StuLio}
 	 	-(P\eta^\prime)^\prime+Q\eta=-\beta w\eta
 	 \end{equation}
 	 where 
 	 \begin{equation*}
 	 	P=P(r)=r^{N-1} |\tilde{U}^{\prime}(r)|^{N-2}\,,\quad Q=Q(r)=-\frac{e^{\tilde{U}(r)}}{N-1} r^{N-1} \,, 
 	 \end{equation*}
  and $$w=w(r)=r^{N-3}|\tilde{U}^{\prime}(r)|^{N-2}\,.$$

 	Let $\mathcal{D}$ be the space of functions $f\in \mathrm{AC}_{\mathrm{loc}}(0,+\infty)\cap L_w^2(0,+\infty)$ such that $$Pf^\prime\in\mathrm{AC}_{\mathrm{loc}}(0,+\infty)\quad\text{and}\quad \frac{1}{w}\left[-(Pf^\prime)^\prime+Qf\right]\in L_w^2(0,+\infty)\,.$$
 	Here $\mathrm{AC}_{\mathrm{loc}}(0,+\infty)$ is the space of real valued functions on $(0,+\infty)$ that locally are absolutely continuous, and $L_w^2(0,+\infty)$ is the weighted $L^2$ space with weight $w$, containing all real valued functions $g$ on $(0,+\infty)$ such that 
 	$$\int_0^{+\infty}g^2(s) w(s)\,ds<+\infty\,.$$
 	For $f\in\mathcal{D}$, define $$\mathcal{L} f:=\frac{1}{w}\left[-(Pf^\prime)^\prime+Qf\right].$$
 	Observe that the endpoints $0$ and $+\infty$ are both limit-points for equation \eqref{eq:StuLio} in Sturm--Liouville theory (see for instance \cite[Definition 7.3.1]{Zet05}). Thus, according to the characterization in (i) of \cite[Theorem 10.4.1]{Zet05}, we see that $\mathcal{L}$ is a self-adjoint operator on $\mathcal{D}$.
 Moreover, by the assumption that $\bar{\eta}$ satisfies \eqref{eq:eta}  and using \eqref{eq:asym-eta}, we have that $-\beta$ is an eigenvalue of $\mathcal{L}$, with $\bar{\eta}\in\mathcal{D}$ being one of its eigenfunction. We claim $\beta<1$ and that $\bar{\eta}$ has at least one but finitely many zeros in $(0,+\infty)$.
 	
 	Indeed, let $\sigma_0$ be the infimum of the essential spectrum of $\mathcal{L}$. Note that $-1$ is another eigenvalue of $\mathcal{L}$ with $\bar{\eta}_1\in\mathcal{D}$ being one of its eigenfunctions. Since  $\bar{\eta}_1$ has no zeros in $(0,+\infty)$, we infer that $\sigma_0>-\infty$ by virtue of spectrum properties for Sturm--Liouville problems (see for instance (8)-(i) of \cite[Theorem 10.12.1]{Zet05}).
 	
 	If $\sigma_0=+\infty$ (i.e. when the essential spectrum of $\mathcal{L}$ is empty), then by (8)-(ii) of \cite[Theorem 10.12.1]{Zet05} we can deduce that the spectrum of $\mathcal{L}$ is bounded below with $-1$ as the minimum and $+\infty$ as the supremum. More precisely, the eigenvalues of $\mathcal{L}$ in this case are all simple and can be ordered as $\{\lambda_n:n\in\mathbb{N}\}$ fulfilling
 	\begin{equation}\label{eq:order-eigen}
 		-1=\lambda_0<\cdots<\lambda_{n-1}<\lambda_n<\cdots \,,
 	\end{equation}
 with $\lambda_n\to+\infty$ as $n\to+\infty$. Moreover, an eigenfunction of $\lambda_n$ has exactly $n$ zeros in $(0,+\infty)$.  Thus, we see $-\beta\geq\lambda_1>-1$ and the claim is true.  (note that $0$ is not an eigenvalue of $\mathcal{L}$ in $\mathcal{D}$, due to $\bar{\eta}_0\notin L_w^2(0,+\infty)$).

If $-\infty<\sigma_0<+\infty$, then we have $\sigma_0\geq0>-\beta>-1$. Indeed, by (8)-(iii) of \cite[Theorem 10.12.1]{Zet05} and the fact that $\bar{\eta}_0$ has only one zero point in $(0,+\infty)$ and is a solution to \eqref{eq:StuLio} with $\beta=0$,  we see it must hold $\sigma_0\geq0$. Also, in this case all the eigenvalues below $\sigma_0$, including $-1$ and $-\beta$, are all simple and can be ordered as in \eqref{eq:order-eigen} (accumulating at $\sigma_0$ if they are of an infinite number), with an eigenfunction of $\lambda_n$ having exactly $n$ zeros in $(0,+\infty)$. Hence, the claim is still true as in the case $\sigma_0=+\infty$.	
 	
Now, denote by $a$ and $b$ respectively the first zero and the last zero of $\bar{\eta}$ in $(0,+\infty)$. Let $c=(N-1)^\frac{N-1}{N}$ be the unique zero of $\bar{\eta}_0$ in $(0,+\infty)$. Since both $\bar{\eta}$ and $\bar{\eta}_0$ satisfy \eqref{eq:StuLio}, we deduce that 
\begin{equation}\label{eq:beta-0}
	(P\bar{\eta}^\prime)^\prime\bar{\eta}_0-(P{\bar{\eta}_0}^\prime)^\prime\bar{\eta}=\beta w\bar{\eta}\bar{\eta}_0\,.
\end{equation}
We divide into two cases,  by noting that the boundedness of $\bar{\eta}$ implies there exist sequences $r_n\to0$ and $s_n\to+\infty$ such that 
\begin{equation}\label{eq:rnsn}
	r_n\bar{\eta}^\prime(r_n)\to 0 \quad\text{and}\quad s_n\bar{\eta}^\prime(s_n)\to 0\,,\quad \text{as } n\to+\infty.
\end{equation}
 
Case 1: $a\leq c$. Assume without loss of generality that $\bar\eta>0$ on $(0,a)$ and $\bar\eta^{\prime}(a) \leq 0$. Integrating \eqref{eq:beta-0} from $r_n$ to $a$ and using \eqref{eq:asym-eta} and \eqref{eq:rnsn} yields 
\begin{align*}
0<\beta\int_{0}^a w\bar{\eta}\bar{\eta}_0 = &\, P(a)\bar{\eta}^\prime(a)\bar{\eta}_0(a)-\lim_{n\to +\infty} P(r_n)\bar{\eta}^\prime(r_n)\bar{\eta}_0(r)+\lim_{n\to +\infty}P(r_n){\bar{\eta}_0}^\prime(r_n)\bar{\eta}(r_n)\\
&- P(a){\bar{\eta}_0}^\prime(a)\bar{\eta}(a)\\
=& \, P(a)\bar{\eta}^\prime(a)\bar{\eta}_0(a)\leq 0 \,,
\end{align*}
which is a contradiction.

Case 2: $c<a$. Assume without loss of generality that $\bar\eta< 0$ on $(b,+\infty)$ and $\bar\eta^{\prime}(b) \leq 0$. Integrating \eqref{eq:beta-0} from $b$ to $s_n$ and using \eqref{eq:asym-eta} and \eqref{eq:rnsn} yields 
 	\begin{align*}
 		0<\beta\int_b^{+\infty} w\bar{\eta}\bar{\eta}_0 = &\, \lim_{n\to+\infty} P(s_n)\bar{\eta}^\prime(s_n)\bar{\eta}_0(s_n) - P(b)\bar{\eta}^\prime(b)\bar{\eta}_0(b)- \lim_{n\to +\infty}P(s_n){\bar{\eta}_0}^\prime(s_n)\bar{\eta}(s_n)\\
 		&+ P(b){\bar{\eta}_0}^\prime(b)\bar{\eta}(b)\\
 		= &\, -P(b)\bar{\eta}^\prime(b)\bar{\eta}_0(b)\leq 0 \,,
 	\end{align*}
 	giving a contradiction. 

 As a result, we have proved that when $\beta\notin \{0,1\}$, there are no non-trivial solutions to \eqref{eq:eta} in $C^2(0,+\infty)\cap L^\infty(0,+\infty)$. This completes the proof.
  \end{proof}

 	 The following is a consequence of Theorem \ref{thm:linearized}, which is analogous to \cite[Corollary 1.4]{GGN13} concerning the computation of the Morse index of radial solutions to the H\'enon equation.
 	 
 	 \begin{corollary}
 	 	For $\Lambda\in\mathbb{R}$, let $\mathrm{dim}(\Lambda)$ be the dimension of the space of solutions to the equation \begin{equation}\label{eq:Linearized-Lam}
 	 		-\operatorname{div}(\mathcal{A}(\nabla U_\alpha)\nabla\phi)=\Lambda |x|^{N\alpha}e^{U_\alpha} \phi \quad\text{ in }\mathbb{R}^N \,,
 	 	\end{equation}
  	in the sense as in \eqref{eq:def-solu-Lin}. Then $$\sum_{\Lambda<1}\mathrm{dim}(\Lambda)=\sum_{k<\mathcal{S}_{N,\alpha}}\mathcal{M}(k)\,,$$
  	where $k\in\mathbb{N}$, $\mathcal{M}(k)$ is as in \eqref{eq:Multi}, and $\mathcal{S}_{N,\alpha}$ is defined in \eqref{eq:def-S}.
 	 \end{corollary}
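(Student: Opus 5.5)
The plan is to reduce \eqref{eq:Linearized-Lam} to a family of one-dimensional weighted eigenvalue problems, exactly as in the proof of Theorem \ref{thm:linearized}, and then count the eigenvalues below $1$ in each spherical-harmonic mode using Sturm--Liouville comparison. I take $\mathcal{S}_{N,\alpha}$ to be the positive root of $k(N-2+k)=(N-1)(\alpha+1)^2$, i.e. $\mathcal{S}_{N,\alpha}=-\frac{N-2}{2}+\sqrt{\frac{(N-2)^2}{4}+(N-1)(\alpha+1)^2}$. With this choice, $k<\mathcal{S}_{N,\alpha}$ is equivalent to $\beta_k:=\frac{\mu_k}{(N-1)(\alpha+1)^2}<1$.

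\textbf{Step 1: reduction to an ODE.} Expand $\phi$ as in \eqref{eq:phi} and substitute $\eta_k(r)=\psi_k(r^{\frac1{\alpha+1}})$. The same computation that led to \eqref{eq:etak} shows that $\phi$ solves \eqref{eq:Linearized-Lam} if and only if each $\eta_k$ is a bounded solution of
\[
-(P\eta')'+\beta_k w\eta=\Lambda\rho\,\eta ,\qquad \rho(r)=\frac{e^{\tilde U(r)}r^{N-1}}{N-1},
\]
with $P,w,\tilde U$ as in the proof of Lemma \ref{lem:eta-beta}. Each bounded solution in mode $k$ contributes $\mathcal{M}(k)$ dimensions, one for each $Y_k$. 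Hence
\[
\sum_{\Lambda<1}\mathrm{dim}(\Lambda)=\sum_k \mathcal{M}(k)\cdot\#\{\text{eigenvalues }\Lambda<1\text{ of the mode-}k\text{ problem}\}.
\]

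\textbf{Step 2: counting eigenvalues below $1$ in each mode.} Regard the mode-$k$ equation as a Sturm--Liouville problem with spectral weight $\rho$, posed on the bounded-solution class; by Frobenius asymptotics as in \eqref{eq:asym-eta}, bounded solutions coincide with the principal branch at both ends. The eigenvalues are simple and ordered $\Lambda_0(\beta)<\Lambda_1(\beta)<\cdots$, and the $n$-th eigenfunction has $n$ zeros. By Sturm comparison, or by the Rayleigh quotient, each $\Lambda_n(\beta)$ is strictly increasing in $\beta$. I would then use the known eigenfunctions as anchors:
\begin{itemize}
\item For $\beta=0$, the constant function gives $\Lambda=0$ with no zeros, so $\Lambda_0(0)=0$. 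The function $\bar\eta_0$ has exactly one zero, so $\Lambda_1(0)=1$.
\item For $\beta=1$, $\bar\eta_1$ is positive, so $\Lambda_0(1)=1$.
\end{itemize}

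\textbf{Step 3: conclusion.} By monotonicity, $\Lambda_n(\beta_k)\geq\Lambda_1(0)=1$ for every $n\geq1$ and every $k$, so at most one eigenvalue per mode lies below $1$. That eigenvalue is $\Lambda_0(\beta_k)$, and $\Lambda_0(\beta_k)<1=\Lambda_0(1)$ holds exactly when $\beta_k<1$. For $k=0$ this gives the single eigenvalue $\Lambda=0$, with multiplicity $\mathcal{M}(0)=1$. Summing over $k$ yields $\sum_{k<\mathcal{S}_{N,\alpha}}\mathcal{M}(k)$.

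\textbf{Main obstacle.} The hard part is justifying the oscillation/ordering theory and the strict monotonicity in $\beta$ for this singular problem, and identifying bounded solutions with genuine eigenfunctions. I would follow the approach of Lemma \ref{lem:eta-beta}. First, check that both endpoints are limit-point for the weight $\rho$, so the operator is self-adjoint. Second, show that the essential spectrum lies at or above $1$, or does not interfere with the count, by the zero-counting argument of (8)-(iii) of \cite[Theorem 10.12.1]{Zet05} applied to $\bar\eta_0$. Third, obtain monotonicity through Wronskian identities of the type \eqref{eq:beta-0}, integrated between consecutive zeros and using \eqref{eq:rnsn} to kill the boundary terms.
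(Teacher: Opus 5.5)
Your argument is correct in outline and shares the paper's skeleton: separation of variables, the weighted Sturm--Liouville problem $-(P\eta')'+\beta_k w\eta=\Lambda\rho\eta$ with $\rho=-Q$, simple ordered eigenvalues with the oscillation count, $\Lambda_1(0)=1$ from $\bar\eta_0$, and monotonicity in the angular parameter to get $\Lambda_1(\beta_k)\ge 1$. You differ in how you locate the ground state.

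The paper exhibits, for every $k$, the positive solution \eqref{eq:solu-ak-bk} with eigenvalue $\Lambda_k=\mathfrak{b}_k(N+\mathfrak{b}_k-1)/N$. So $\Lambda_{k,0}=\Lambda_k$ is known in closed form, and the count reduces to the algebraic condition $\mathfrak{b}_k<1$. You instead interpolate between anchors already in hand: the constant, $\bar\eta_0$, and $\bar\eta_1$ from \eqref{eq:def-bareta1}, which gives $\Lambda_0(1)=1$. You then use strict monotonicity of $\Lambda_0(\beta)$ in the continuous parameter $\beta$.

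Your route avoids computing $\mathfrak{a}_k,\mathfrak{b}_k,\Lambda_k$. It also shows directly that the threshold $k<\mathcal{S}_{N,\alpha}$ is just $\beta_k<1$, i.e.\ it sits exactly at the degeneracy values $\alpha_k$ of Theorem \ref{thm:linearized}. The paper's route produces the negative directions explicitly. It also gives the \emph{existence} of an eigenvalue below $1$ in each mode $k<\mathcal{S}_{N,\alpha}$ for free.

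In your Step 3 this existence is tacit (``that eigenvalue is $\Lambda_0(\beta_k)$''), but it is easy to supply. Let $q_\beta(\eta)=\int_0^\infty(P\eta'^2+\beta w\eta^2)\,dr$ and $0\le\beta_k<1$. Testing with $\bar\eta_1$ gives
\[
\Lambda_0(\beta_k)\le\frac{q_{\beta_k}(\bar\eta_1)}{\int_0^\infty\rho\,\bar\eta_1^2\,dr}=1-(1-\beta_k)\frac{\int_0^\infty w\,\bar\eta_1^2\,dr}{\int_0^\infty\rho\,\bar\eta_1^2\,dr}<1 .
\]
Once the spectrum is known to be discrete, its bottom is an eigenvalue with a bounded (principal) eigenfunction.

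One item in your list of obstacles needs correcting: with the weight $\rho$ the endpoints are \emph{not} always limit-point. At infinity $P\sim c\,r$ and $\rho\sim c\,r^{-2-\frac{1}{N-1}}$. So for $k=0$ both $1$ and $\int^r ds/P\sim\log r$ lie in $L^2_\rho$ near $+\infty$. In general $+\infty$ is limit-circle whenever $\sqrt{\beta_k}<\frac{N}{2(N-1)}$, and for $N=2$ the origin is limit-circle too when $\beta_k<1$.

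Self-adjointness therefore has to come from imposing the principal (Friedrichs) boundary condition. This is exactly the boundedness requirement you already identified through Frobenius. This realization has compact resolvent here: each end is either limit-circle or, in $t=\log r$, an $H^1$-type form against an exponentially decaying weight. That gives discreteness, the oscillation theorem and min--max.

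Note also that the form domains are nested only one way: $\int w\eta^2<\infty$ excludes constants near infinity once $\beta>0$. But $D(q_\beta)\subset D(q_0)$ with $q_\beta\ge q_0$ is precisely the direction you need for $\Lambda_1(\beta_k)\ge\Lambda_1(0)=1$.
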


 \begin{proof}
 As done in \eqref{eq:phi}--\eqref{eq:etak}, by writing a solution $\phi$ to \eqref{eq:Linearized-Lam} in the form \eqref{eq:phi} and letting $\eta_k(r)=\psi_k(r^{\frac{1}{\alpha+1}})$, we deduce that in this case for each $k\in\mathbb{N}$, $\eta_k\in C^2(0,+\infty)\cap L^\infty(0,+\infty)$ satisfies
 	\begin{align}
 		&\eta_k^{\prime \prime}(r)+ \left(1+\frac{N(N-2)}{N-1} \frac{1}{1+r^{\frac{N}{N-1}}}\right) \frac{\eta_k^{\prime}(r)}{r}
 		-\frac{\mu_k}{(N-1)(\alpha+1)^2} \frac{\eta_k(r)}{r^2} \notag\\
 		=& -\frac{\Lambda N^3}{(N-1)^2} \frac{r^{\frac{N}{N-1}-2}}{\left(1+r^{\frac{N}{N-1}}\right)^2}\eta_k(r) \,. \label{eq:etak-Lam}
 	\end{align}
Notice that \eqref{eq:etak-Lam} is equivalent to the following ODE:
\begin{equation}\label{eq:StuLio-Lam}
 		-(P\eta_k^\prime)^\prime+\Lambda Q\eta_k=-\frac{\mu_k }{{(N-1)(\alpha+1)^2}}w\eta_k
 	\end{equation}
 	where $P$, $Q$ and $w$ are as in \eqref{eq:StuLio}.

It can be verified that, for each $k\in\mathbb{N}$, the function
 \begin{equation}\label{eq:solu-ak-bk}
 	\frac{r^{\mathfrak{a}_k}}{\left(1+r^{\frac{N}{N-1}}\right)^{\mathfrak{b}_k}}
 \end{equation}
is a solution of \eqref{eq:StuLio-Lam} with $\Lambda=\Lambda_k$, where 
\begin{gather*}
	\mathfrak{a}_k=\frac{\sqrt{N^2(N-2)^2+\frac{4(N-1)\mu_k}{(\alpha+1)^2}}- N(N-2)}{2(N-1)}\\
	\mathfrak{b}_k= \frac{\sqrt{N^2(N-2)^2+\frac{4(N-1)\mu_k}{(\alpha+1)^2}}+\sqrt{\frac{4(N-1)\mu_k}{(\alpha+1)^2}}- N(N-2)}{2N}\\
	\Lambda_k=\frac{\mathfrak{b}_k(N+\mathfrak{b}_k-1)}{N}\,.
\end{gather*}
 Then similarly as argued in the proof of Lemma \ref{lem:eta-beta} for \eqref{eq:StuLio} in $L_{w}^2(0,+\infty)$ via Sturm--Liouville theory, we can deduce that for each $k\in\mathbb{N}$, equation \eqref{eq:StuLio-Lam} admits solutions in $L_{-Q}^2(0,+\infty)$ when $\Lambda$ is taken from a sequence of the simple eigenvalues $\Lambda_{k,0}<\Lambda_{k,1}<\Lambda_{k,2}<\cdots$. 
 Furthermore, since the function in \eqref{eq:solu-ak-bk} has no zeros in $(0,+\infty)$, we see that $\Lambda_{k,0}=\Lambda_k$ for each $k\in\mathbb{N}$. Also, since $\bar{\eta}_0$, given in \eqref{eq:def-bareta0},  belongs to $L_{-Q}^2(0,+\infty)$ and has exactly one zero in $(0,+\infty)$ and, by Lemma \ref{lem:eta-beta}, is a solution to \eqref{eq:StuLio-Lam} when $k=0$ and $\Lambda=1$, we infer that $\Lambda_{0,1}=1$. Since $\Lambda_{k,1}$ is increasing with respect to $k$ by the characterization of the eigenvalues, $\Lambda_{k,1}\geq \Lambda_{0,1}=1$ for any $k\in\mathbb{N}$. Hence, we deduce that solutions to \eqref{eq:etak-Lam} with $\Lambda<1$ occur only for those $k\in\mathbb{N}$ such that $\Lambda=\Lambda_k<1$, which implies
 \begin{equation}\label{eq:def-S}
 	k<\mathcal{S}_{N,\alpha}:=\frac{2-N+\sqrt{(N-2)^2+4(N-1)(\alpha+1)^2}}{2}\,.
 \end{equation}
From this, the assertion is easily concluded, completing the proof. 
 \end{proof}

 \medskip

 \subsection*{Acknowledgements}
 
 Authors have been partially supported by the ``Gruppo Nazionale per l'Analisi Matematica, la Probabilit\`a e le loro Applicazioni" (GNAMPA) of the ``Istituto Nazionale di Alta Matematica" (INdAM, Italy).  P.Esposito has been partially supported by the project PNRR-M4C2-I1.1-PRIN 2022-PE1-Variational and Analytical aspects of Geometric PDEs-F53D23002690006 - Funded by the U.E.-NextGenerationEU.  Part of this work was done while P. Esposito and X. Li were visiting Dipartimento di Matematica ``Federigo Enriques" at Universit\`a degli Studi di Milano and Dipartimento di Matematica e Fisica at Universit\`a degli Studi Roma Tre, respectively, whose kind hospitality is gratefully acknowledged.
 
 The result in Theorem 1.2 of this manuscript was presented by X. Li in invited talks at ``Shape Optimization, Geometric Inequalities and Related Topics -- III" (Naples) and ``Geometric-Analytic Methods for PDEs and Applications" (Florence) in January and February 2026, respectively. He thanks Paolo Cosentino and Paolo Salani for their helpful remarks on this result during these presentations.

\end{document}